\newcommand{\ZZ}{\mathbb{Z}}
\newcommand{\QQ}{\mathcal{Q}}
\newcommand{\RR}{\mathbb{R}}
\newcommand{\ee}{\varepsilon}
\newcommand{\im}{\mathrm{Im}}
\newcommand{\Ker}{\mathrm{Ker}}
\newcommand{\n}{\mathfrak{n}}
\newcommand{\h}{\mathfrak{h}}
\newcommand{\degg}{\leq_{\deg}}
\DeclareMathOperator{\GL}{GL}
\DeclareMathOperator{\Hom}{Hom}
\newcommand{\bi}{\mathbf{i}}
\newcommand{\bd}{\mathbf{d}}
\newcommand{\be}{\mathbf{e}}
\newcommand{\g}{\mathfrak{g}}
\newcommand{\ve}{\varepsilon}
\newcommand{\wt}{\widetilde}
\newcommand{\form}{\langle-,-\rangle}
\numberwithin{equation}{section}
\newtheorem{thm}{Theorem}[section]
\newtheorem*{thm*}{Theorem}
\newtheorem{prop}[thm]{Proposition}
\newtheorem{lem}[thm]{Lemma}
\newtheorem{cor}[thm]{Corollary}
\theoremstyle{remark}
\newtheorem{rem}[thm]{Remark}
\newtheorem{definition}[thm]{Definition}
\begin{document}
\title[Linear degenerations of symplectic flags]{Linear degenerate symplectic flag varieties: symmetric degenerations and PBW locus}
\author{Magdalena Boos, Giovanni Cerulli Irelli,  Xin Fang, Ghislain Fourier}
\address{Magdalena Boos:\newline Ruhr University Bochum, Faculty of Mathematics,  44780 Bochum, (Germany).} \email{magdalena.boos@gmx.net}
\address{Giovanni Cerulli Irelli:\newline
Department S.B.A.I.. Sapienza-Universit\`a di Roma. Via Scarpa 10, 00164, Roma (Italy)}
\email{giovanni.cerulliirelli@uniroma1.it}
\address{Xin Fang:\newline
Chair of Algebra and Representation Theory, RWTH Aachen University, Pontdriesch 10--16, 52062 Aachen}
\email{xinfang.math@gmail.com}
\address{Ghislain Fourier:\newline
Chair of Algebra and Representation Theory, RWTH Aachen University, Pontdriesch 10--16, 52062 Aachen}
\email{fourier@art.rwth-aachen.de}

\begin{abstract}
We conceptualize in the paper the linear degenerate symplectic flag varieties as symmetric degenerations within the framework of type $A$ equioriented quivers. First, in the larger context of symmetric degenerations, we give a self-contained proof of the equivalence of different degeneration orders. Furthermore, we investigate the PBW locus: geometric properties of the degenerate varieties in this locus are proved by realizing them from different perspectives.
\end{abstract}

\maketitle

\section{Introduction}
For a quiver $\QQ$, a representation $M$ of $\QQ$ and a fixed dimension vector $\mathbf{e}$, the quiver Grassmannian is the moduli space of the $\mathbf{e}$-dimensional subrepresentations of $M$. 
This represents a vast generalization of the notion of Grassmann varieties, which parametrize the $k$-dimensional subspaces in an $n$-dimensional space.
Quiver Grassmannians, with their reduced structures, are projective varieties\footnote{Projective varieties are not necessarily irreducible.} and furthermore, any projective variety is isomorphic to some quiver Grassmannian \cite{Rei13}. Therefore, few geometric properties are shared by all quiver Grassmannians, and one should specify a particular quiver or a certain family of quivers at the outset.
A specific example of a quiver Grassmannian is the flag variety, which can be described using subrepresentations for the type $A$ equioriented quiver.

Quiver Grassmannians for quivers of Dynkin type have gained a lot of attention \cite{CIEFR21, CIEM23,  CIEFF22, CFFFR20,  CIFR12, CIFR17, CIL, FR21, Fei12,   LS19, Mak19}.
For example in \cite{CFFFR}, quiver Grassmannians of the equioriented type $A$ quiver with arbitrary representation of fixed dimension vector are studied under the name \emph{linear degenerate flag varieties}.
They can be viewed as degenerations (as projective varieties) of the flag varieties: they are fibers of a proper family $\pi: X\rightarrow Y$, called the universal degenerate flag variety, whose generic fiber is the flag variety and the parameterizing space $Y$ is the representation variety of an equioriented type $A_n$ quiver. In \emph{loc.cit.}, the authors analyzed the locus where the fiber has minimal dimension (called flat locus); within it, the locus where the fiber is irreducible (called the flat irreducible locus); and within it, the locus where the fiber is naturally a Schubert variety (called the \emph{PBW locus}). The family $\pi$ is acted upon by a group $G$, and every $G$-orbit $G\cdot M$ on $Y$ is uniquely determined by a rank sequence $r^M:=(r_{i,j}^M|1\leq i\leq j\leq n)$. These loci are described in terms of rank sequences.\\

This current project is the first step in analyzing linear degenerate symplectic flag varieties. 
The symplectic flag varieties appear naturally as highest weight orbits of the action of the symplectic group, similarly to how the classical flag varieties appear as orbits of the general linear group. 

On the other hand, the symplectic flag variety is also naturally the fixed point set of an involution acting on the complete flag variety. 
In order to define a universal \emph{symplectic} degenerate flag variety, we consider an equioriented type $A$ quiver $\stackrel{\rightarrow}{A}_{\mathrm{odd}}$, having an odd number of vertices. 
In this case, the universal degenerate flag variety $\pi:X\rightarrow Y$ is acted upon by an involution ${}^\star$ whose fixed point family, denoted by $\pi^\star:X^\star\to Y^\star$, is the required universal symplectic degenerate flag variety. 
This variety is equivariant with respect to a closed subgroup $H$ of $G$. 
The points of $Y^\star$ are called symplectic representations of $\stackrel{\rightarrow}{A}$.

Our first result describes the $H$-orbits and their closures inside $Y^\star$ in terms of rank sequences. 
In fact, this construction is part of a general theory of quivers with self-duality, wherein this situation is termed the $(\stackrel{\rightarrow}{A}_{\mathrm{odd}},-1)$ type. 
We describe the orbit closures also in the other type $(\stackrel{\rightarrow}{A}_{\mathrm{even}},+1)$. 
In general, for a positive integer $n$ and a sign $\ee=\pm 1$ there is the notion of $\ee$-representation of $\stackrel{\rightarrow}{A}_n$, if $\ee=-1$ these are called symplectic and if $\ee=1$ they are called orthogonal (see Section~\ref{Sec:EE-rep}). 
In order to formulate the result, recall that the closures of the $G$-orbits in $Y$ are also described by ``cuts'' and ``shifts'' \cite{AbeasisDelFraEquioriented}. 
In order to give a combinatorial way to describe the closures of the $H$-orbits we introduce the notion of ``symmetric cuts''  and ``symmetric shifts'' (see Definition~\ref{Def:SymmCutsShifts}). The following is our first main result (see Theorem~\ref{Thm:MainEquioriented} and Theorem~\ref{Thm:ee-degenerationRanks} for precise statements):


\begin{thm*}
Let $n\geq 1$ be a positive integer,  $\stackrel{\rightarrow}{A}_n$ an equioriented quiver whose underlying graph is the Dynkin diagram $A_n$, and let $\sigma$ denote its  non-trivial involution. 
Let $M$ be a representation of $\stackrel{\rightarrow}{A}_n$. 
\begin{enumerate}
    \item The intersection $G\cdot M\cap Y^\star$ is non-empty if and only if the rank sequence $r^M=(r_{i,j}^M)$ of $M$ satisfies the following relations: 
\begin{eqnarray}
&&r_{i,j}^M=r_{\sigma(j),\sigma(i)}^M;\\
&&\text{in type }(\stackrel{\rightarrow}{A}_{\mathrm{odd}},-1)\text{ and in type }(\stackrel{\rightarrow}{A}_{\mathrm{even}},+1),\text{ the ranks }  r_{i,\sigma(i)}^M\text{ are even}.
\end{eqnarray}
\item In the $(\stackrel{\rightarrow}{A}_{\mathrm{odd}},-1)$ and $(\stackrel{\rightarrow}{A}_{\mathrm{even}},+1)$ types,
an $\ee$-representation $N$ lies in the closure of the $H$-orbit of an $\ee$-representation $M$ if and only if  $r^M\geq r^N$ if and only if $N$ is obtained from $M$ by a finite sequence of symmetric cuts and symmetric shifts.
\end{enumerate} 
\end{thm*}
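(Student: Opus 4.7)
The plan is to reduce both statements to the classical theory of Abeasis--Del Fra on $Y$ by systematically using the involution $\star$. For part (1), I would first make $\star$ explicit: it sends a representation $M=(M_i,f_{i,i+1})$ to the representation with space $M_{\sigma(i)}^*$ at vertex $i$ and $\ee$-twisted transposes of the $f_{j,j+1}$ as arrows, so a point of $Y^\star$ is a pair $(M,\phi)$ with $\phi\colon M\xrightarrow{\sim}M^\star$ satisfying $\phi^\star=\ee\phi$, equivalently a non-degenerate $\ee$-form on $\bigoplus M_i$ compatible with the quiver maps. Necessity of $r_{i,j}^M=r_{\sigma(j),\sigma(i)}^M$ is then a dimension count via $\phi$, identifying $\im f_{i,j}$ with the annihilator of $\im f_{\sigma(j),\sigma(i)}$; the evenness of $r_{i,\sigma(i)}^M$ in the two mixed-sign types follows because on these ranges the form induces an alternating non-degenerate pairing on a certain subquotient of $M_i$. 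For sufficiency, the rank symmetry forces $M\cong M^\star$, so I would decompose $M$ into indecomposables and group them into pairs $\{I,I^\star\}$, which always admit a hyperbolic $\ee$-form, and self-dual summands $I\cong I^\star$, which admit an $\ee$-form only under a parity constraint on multiplicities. The parity constraint matches exactly the evenness of $r_{i,\sigma(i)}^M$; carrying out this multiplicity bookkeeping is the main work of (1).

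\medskip

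For part (2) I would argue a cycle of three implications. ``Closure in $Y^\star$ implies $r^M\geq r^N$'' is immediate from upper semicontinuity of the rank maps on $Y$, restricted along $Y^\star\hookrightarrow Y$. ``Symmetric cuts and shifts imply closure'' I would prove by exhibiting, for each elementary operation of Definition~\ref{Def:SymmCutsShifts}, an explicit one-parameter family inside $Y^\star$ whose generic fibre lies in the $H$-orbit of $M$ and whose special fibre lies in the $H$-orbit of the degenerated representation; the required family is obtained by making the Abeasis--Del Fra family for a non-symmetric elementary cut or shift $\star$-equivariant, which is possible exactly when the cut or shift is paired with its $\sigma$-conjugate.

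\medskip

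The main obstacle is the remaining implication ``$r^M\geq r^N$ implies $N$ is obtained from $M$ by symmetric cuts and shifts.'' I would induct on the total discrepancy $\sum_{i\leq j}(r_{i,j}^M-r_{i,j}^N)$ by running the classical Abeasis--Del Fra algorithm on $(r^M,r^N)$. When the first elementary step it produces is already $\sigma$-invariant, it is a symmetric operation and induction applies. Otherwise the $\sigma$-conjugate step is also available, because $r^N$ satisfies the symmetry conditions of part (1), and one has to verify that the two elementary steps commute, that their composition preserves the $\ee$-representation condition (this is where the evenness hypothesis enters: pairs of non-$\sigma$-invariant steps modify $r_{i,\sigma(i)}$ only in even amounts, so no ``half-step'' breaks the form), and that the composite coincides with one of the symmetric operations of Definition~\ref{Def:SymmCutsShifts}. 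I expect this combinatorial check, together with the matching between self-dual indecomposables and $\sigma$-invariant operations, to be the principal technical hurdle of the theorem.
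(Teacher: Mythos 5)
Your treatment of part (1) is essentially the paper's: decompose $M$ into indecomposables, use that in the split types the indecomposable $\ee$-representations are exactly the hyperbolic pairs $U_{i,j}\oplus U_{\sigma(j),\sigma(i)}$, and translate the resulting multiplicity constraints (symmetry, and even multiplicity of the self-dual segments $U_{i,\sigma(i)}$) into rank conditions via the multiplicity--rank inversion of Lemma~\ref{Lem:RankMultiplicity}. Your justification of the evenness via the alternating pairing $(v,w)\mapsto\langle f_{i,\sigma(i)}(v),w\rangle$ is a clean alternative to the paper's purely combinatorial derivation, and is in fact the content of Lemma~\ref{Lem:AdjointEqui}. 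Likewise, your two ``easy'' implications in part (2) match the paper: upper semicontinuity for $M\degg^\ee N\Rightarrow r^M\geq r^N$, and, for ``symmetric cut/shift implies degeneration'', the paper uses Corollary~\ref{Cor:SplitIndIsotropic} (indecomposable subrepresentations are isotropic in the split types) together with the reduction $M\degg^\ee U\oplus\nabla U\oplus U^\perp/U$ for isotropic $U$, which is the intrinsic version of your $\star$-equivariant one-parameter family.

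The genuine gap is in the implication $r^M\geq r^N\Rightarrow M\leq_c^\ee N$. Your plan --- run the Abeasis--Del Fra algorithm and pair each non-$\sigma$-invariant elementary step with its $\sigma$-conjugate --- presupposes that the classical chain of elementary cuts and shifts can be reorganized so that consecutive pairs compose to the symmetric operations of Definition~\ref{Def:SymmCutsShifts}, with every intermediate stage still an $\ee$-representation. This is precisely the hard content, and it is not achieved by the commutation check you describe: after performing one half of a would-be pair, the intermediate rank sequence violates the symmetry of Theorem~\ref{Thm:MainEquioriented}, the dual segment needed for the second half may no longer sit inside the same symmetric segment, and (most seriously) a single symmetric reduction step can require a cut \emph{followed by} a shift, or two shifts, rather than two conjugate copies of one elementary operation --- see formula \eqref{Eq:RankGenericDegenerationCutsShift} and Proposition~\ref{Prop:RankSequenceOrthogonal}, where the generic isotropic reduction decomposes into a case-dependent mixture of operations. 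The paper circumvents all of this with a different induction (Lemma~\ref{Lem:CEquivalentR}): choose the maximal $i$ with $P_i=U_{i,n}$ a direct summand of $N$, embed $P_i$ generically and isotropically into $M$, pass to $M(1)=\iota(P_i)^\perp/\iota(P_i)$, verify by the explicit rank formula \eqref{Eq:RankPerpQuotient} both that $M\leq_c^\ee P_i\oplus\nabla P_i\oplus M(1)$ and that $r^{M(1)}\geq r^{N(1)}$, and induct on the rank discrepancy. To complete your argument you would either need to prove the reorganization claim for the AD algorithm (which I do not see how to do directly) or replace it by an isotropic-reduction induction of this kind.
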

We provide moreover an algorithm to compute for a given symmetric degeneration $M\degg^\ee N$ a sequence of one-parameter subgroups from $M$ to $N$ (see Section~\ref{Sec_Algorithm} for some examples).
\bigskip

Let us now turn our attention back to the universal symplectic degenerate flag variety. We consider some special fibers that we call \textit{PBW}-degenerations of the symplectic flag variety, in analogy to \cite{CFFFR}. They are parametrized by a subset $\bi$ of $[n]$, where the quiver has $2n-1$ vertices. In \cite{CFFFR}, such a subset $\bi$ is naturally associated with an element $w_\bi$ of the symmetric group. Similarly, we associate it here with an element in the Weyl group of the  symplectic group.
The second main result of the paper is the following
\begin{thm*}
    A \textit{PBW}-degeneration of the symplectic flag variety is irreducible, reduced, normal, Cohen-Macaulay, Frobenius splitting and has rational singularities.
\end{thm*}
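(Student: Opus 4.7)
The plan is to realize each \emph{PBW}-degeneration of the symplectic flag variety as a Schubert variety in a partial flag variety of the symplectic group, and then invoke Ramanathan's theorem on the geometry of Schubert varieties of semisimple groups.

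First, using the symmetric degeneration framework developed in the first part of the paper, I would express the PBW-degenerate symplectic flag variety associated with $\bi\subseteq[n]$ as the $^\star$-fixed locus of the corresponding PBW-degenerate type $A$ flag variety (the latter being a fiber of the universal degenerate flag variety over the relevant $^\star$-fixed point of $Y$). By the main result of \cite{CFFFR}, this type $A$ PBW-degeneration is isomorphic to a Schubert variety $X_{w_\bi}\subseteq\mathrm{SL}_{2n}/P_\bi$ for an explicit parabolic $P_\bi$ and permutation $w_\bi\in S_{2n}$, so the target variety is identified with the fixed locus $X_{w_\bi}^\star$.

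Second, I would show that this fixed locus coincides with a single Schubert variety $Y_v\subseteq\mathrm{Sp}_{2n}/Q$: the involution $^\star$ on $\mathrm{SL}_{2n}/P_\bi$ has fixed locus isomorphic to a partial symplectic flag variety $\mathrm{Sp}_{2n}/Q$, and the intersection of a $^\star$-stable type-$A$ Schubert variety with this symplectic flag variety is known to be a symplectic Schubert variety. The identification of the resulting Weyl-group element with the element $v\in W(\mathrm{Sp}_{2n})$ prescribed in the statement of the theorem should follow by comparing rank sequences, using the first main theorem cited in the excerpt.

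Once the PBW-degenerate symplectic flag variety is identified with a Schubert variety $Y_v$ in $\mathrm{Sp}_{2n}/Q$, all six listed properties follow from the classical results of Ramanathan and Mehta--Ramanathan: $Y_v$ is reduced and irreducible as the closure of a Bruhat cell, and is normal, Cohen--Macaulay, Frobenius split, and has rational singularities.

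The principal difficulty lies in the second step: verifying that $X_{w_\bi}^\star$ is a \emph{single} Schubert variety in $\mathrm{Sp}_{2n}/Q$ with exactly the Weyl-group element prescribed by the theorem, rather than a union of Schubert varieties or a more singular subvariety. An alternative, more direct route avoids the type-$A$ embedding altogether: one constructs an explicit base point in a partial symplectic flag variety whose closure of the $B$-orbit through it is isomorphic to the PBW-degenerate symplectic flag variety, and identifies this closure with $Y_v$ by matching Bruhat-cell dimensions and $T$-fixed points with the combinatorial data associated to $\bi$.
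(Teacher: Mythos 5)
Your proposal follows essentially the same route as the paper: there, $\mathrm{Sp}\mathcal{F}_{2n}^{\bi}$ is realized as the fixed locus of an involution $\tau$ on a type $A$ linear degenerate flag variety $\mathcal{F}_{2n}^{\bi'}$ (Lemma~\ref{Lem:SpInv}), the latter is identified with a Schubert variety $X_{u_{\bi'}}$ via \cite{CFFFR} (Lemma~\ref{Lem:TypeA}), the identification is shown to intertwine the involutions (Lemma~\ref{Lem:Commute}), and \cite{LS2} is invoked to conclude that the fixed locus is the single symplectic Schubert variety $X_{w_\bi}$, from which all six properties follow by Mehta--Ramanathan --- so the step you flag as the principal difficulty is precisely what Lemma~\ref{Lem:Commute} together with the citation of \cite{LS2} resolves. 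The only imprecision is the rank of the ambient groups: the Schubert varieties live in partial flag varieties for $\mathrm{SL}_{2(n+t)}$ and $\mathrm{Sp}_{2(n+t)}$ with $t=|\bi|$, not for $\mathrm{SL}_{2n}$ and $\mathrm{Sp}_{2n}$.
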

We deduce this from the following theorem (Theorem~\ref{Thm:Iso}) whose proof is given in Section~\ref{Sec:Proof}.
\begin{thm*}
The following varieties are isomorphic:
\begin{enumerate}
\item the symmetric degeneration $\mathrm{Sp}\mathcal{F}_{2n}^\bi$;
\item the Lagrangian quiver Grassmannian $\mathrm{Lag}_{\be}(M^\bi\oplus\nabla (M^\bi))$;
\item the Schubert variety $X_{w_\bi}$;
\item for any $\bd\in \mathrm{relint}(F^\bi)$, the $\bd$-degenerate flag variety $\mathcal{F}^\bd(\rho)$, where $\rho=\varpi_1+\ldots+\varpi_n$ is the sum of all fundamental weights.
\end{enumerate}
\end{thm*}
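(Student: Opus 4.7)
The plan is to use the Lagrangian quiver Grassmannian (2) as a hub and construct explicit isomorphisms of it with each of (1), (3), (4). The principal input is the type $A$ analog from \cite{CFFFR}, which identifies PBW-degenerate flag varieties with quiver Grassmannians and with type $A$ Schubert varieties.

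For (1)$\cong$(2) I would unwind the definitions. By construction, $\mathrm{Sp}\mathcal{F}_{2n}^\bi$ is the fixed-point locus of the symplectic involution ${}^\star$ acting on the linear degenerate flag variety $\mathcal{F}_{2n}^\bi$, and by the type $A$ identification the latter is a quiver Grassmannian for a representation of $\stackrel{\rightarrow}{A}_{2n-1}$; the natural self-dual model in which the symplectic pairing is visible is precisely $M^\bi\oplus\nabla(M^\bi)$, with the pairing coming from evaluation between $M^\bi$ and its dual. A direct unwinding then shows that ${}^\star$-fixed subrepresentations of dimension $\be$ are exactly those that are Lagrangian for this pairing, so the fixed locus is $\mathrm{Lag}_{\be}(M^\bi\oplus\nabla(M^\bi))$.

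For (2)$\cong$(3) I would proceed in two steps. First, by the type $A$ PBW theorem of \cite{CFFFR}, the ambient quiver Grassmannian $\mathrm{Gr}_{\be}(M^\bi\oplus\nabla(M^\bi))$ is isomorphic to a type $A$ Schubert variety $X_{w_\bi^A}\subset\mathcal{F}_{2n}$. Second, one verifies that ${}^\star$ preserves $X_{w_\bi^A}$ and that its fixed-point variety is the symplectic Schubert variety $X_{w_\bi}$ for the folded Weyl element $w_\bi$. This can be checked by a $T$-fixed point and tangent space comparison, or alternatively via the general theory of fixed loci of diagram automorphisms acting on Schubert varieties; either way, the ${}^\star$-fixed subvariety $(X_{w_\bi^A})^\star$ is normal and matches $X_{w_\bi}$ on closed points.

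Finally, (3)$\cong$(4) is obtained by realising $\mathcal{F}^\bd(\rho)$ for any $\bd\in\mathrm{relint}(F^\bi)$ as the closure of a cyclic highest-weight orbit inside the PBW-graded of the irreducible $\mathfrak{sp}_{2n}$-module $V(\rho)$; since the underlying degenerate Lie algebra and cyclic module depend only on the face $F^\bi$, the orbit closure is independent of the choice of $\bd$, and matching the cyclic vector with the extremal Pl\"ucker coordinates identifies it with $X_{w_\bi}$. The main obstacle I expect is precisely this last step: unlike in the type $A$ situation of \cite{CFFFR}, the PBW-degenerate symplectic Lie algebra is not a simple flattening of one Borel, so the annihilator of the cyclic vector must be analysed intrinsically, and the resulting orbit closure must be shown to be normal and reduced of the correct dimension before one can conclude the isomorphism with $X_{w_\bi}$.
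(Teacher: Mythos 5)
Your overall architecture closely parallels the paper's: reduce everything to an explicit linear-algebra model, identify the ambient type $A$ object with a Schubert variety for a larger $\mathrm{SL}_{2(n+t)}$, and obtain the symplectic Schubert variety as the fixed locus of an involution. The only structural difference is the hub: you route through (2), while the paper uses (1), and its proof of $(1)\Leftrightarrow(2)$ is essentially the observation that a Lagrangian subrepresentation of $M^\bi\oplus\nabla(M^\bi)$ of dimension vector $\be$ is uniquely determined by its components at the vertices $1,\dots,n-1,\omega$, the starred components being forced by orthogonality. Either hub works, and your fixed-point description of $(1)\Leftrightarrow(2)$ is equivalent to the paper's Lemma~\ref{Lem:SpInv} combined with that observation.

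There are, however, two places where your outline is not yet a proof. First, in $(2)\Leftrightarrow(3)$ you assert that ${}^\star$ preserves the type $A$ Schubert variety $X_{w_\bi^A}$ and that its fixed locus is $X_{w_\bi}$. This requires choosing a symplectic form on the ambient space $W=\mathbb{C}^{2(n+t)}$ and verifying that the explicit embedding $\mu$ of the degenerate flag variety into $\mathrm{SL}_{2(n+t)}/P^{\bi'}$ intertwines the involution $\tau$ downstairs with the involution $\tau'$ upstairs. This is not automatic: the paper devotes Lemma~\ref{Lem:Commute} to it, and the verification hinges on a specific sign choice ($\omega_W(e_{i_k+1},e_{2n+2t+1-k})=-1$ on the added basis vectors) and a computation with the lifting maps $p_{h_k}$. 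Without this compatibility, the fixed locus of an involution on $X_{u_{\bi'}}$ need not correspond to the symplectic condition on the original flags, so this step must be supplied. Once it is, the identification of $(X_{u_{\bi'}})^{\tau'}$ with $X_{w_\bi}$ does follow from the general theory of fixed loci of diagram involutions, as you suggest (the paper cites \cite{LS2}).

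Second, for $(3)\Leftrightarrow(4)$ you correctly identify the obstacle --- controlling the annihilator of the cyclic vector in $V^\bd(\rho)$ --- but you leave it open and propose an ``intrinsic analysis'' that you do not carry out. The paper closes this gap differently: $w_\bi$ is obtained by folding a triangular element in the sense of \cite{BFK}, hence is itself triangular, so the argument of \cite[Proposition 10]{CFFFR} applies in the symplectic case and the natural surjection between $V^\bd(\rho)$ and the Demazure module $\wt{V}_{w_\bi}(\Psi(\rho))$, obtained from the Cartan component of the tensor product of fundamental modules, is injective; alternatively one can quote \cite[Corollary 3.3]{EFFS} outright. Independence of the choice of $\bd\in\mathrm{relint}(F^\bi)$ is Proposition~\ref{Prop:Prelim} and is indeed as you describe. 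As written, your proposal establishes $(1)\Leftrightarrow(2)$ and the skeleton of $(2)\Leftrightarrow(3)$, but the compatibility of involutions and the whole of $(3)\Leftrightarrow(4)$ remain to be proved.
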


The study of other fibers of the universal symplectic degenerate flag variety is part of a future project. Here the similarities to the non-symplectic case are less obvious and the techniques to study Lagrangian quiver Grassmannians have to be developed further. 

The paper is structured as follows: in Section~\ref{Sec:Equioriented} we recall results on symmetric representation theory, properties of rank sequences and provide the proof of the first Theorem above.  In Section~\ref{Sec:PBW-locus} we introduce the Langragian quiver Grassmannians, the combinatorics of the Schubert varieties, recall the abelianizations of symplectic flag varieties and state our second main result, which will be then proved in Section~\ref{Sec:Proof}. We conclude the paper with the algorithm that computes the degeneration sequences and some examples in Section~\ref{Sec_Algorithm}.

\vspace{10pt}
\noindent \textbf{Acknowledgements:} We thank Peter Littelmann for all the inspirations he gave us during the years. 

The work of MB was sponsored by DFG Forschungsstipendium BO 5359/1-1, DFG R\"uckkehrstipendium BO 5359/3-1 and DFG Sachbeihilfe BO 5359/2-1. The work of GCI was sponsored by PRIN2022-SQUARE and by INDAM-GNSAGA. The work of GF is funded by the Deutsche
Forschungsgemeinschaft (DFG, German Research Foundation): “Symbolic Tools in
Mathematics and their Application” (TRR 195, project-ID 286237555).

\section{The equioriented type A quivers}\label{Sec:Equioriented}

In the paper we fix $\mathbb{C}$, the field of complex numbers, as the base field. 

\subsection{Recollection: equioriented type \texorpdfstring{$A$}{A} quiver}

Let $n\geq 2$ be a positive integer. We denote by  $\stackrel{\rightarrow}{A_n}$ the equioriented Dynkin quiver of type $A_n$ with the following labeling of vertices and arrows: 
\begin{equation}\label{Eq:QuiverEqui}
\xymatrix{
\stackrel{\rightarrow}{A_n}:1\ar^{\alpha_1}[r]&2\ar^{\alpha_2}[r]&\cdots\ar[r]&n-1\ar^{\alpha_{n-1}}[r]&n.
}
\end{equation}
We denote by $\sigma$ the anti-involution defined on vertices by $\sigma(i)=n+1-i$ and on arrows by $\sigma(\alpha_i)=\alpha_{n-1}$. The pair $(\stackrel{\rightarrow}{A_n},\sigma)$ is the typical example of a \emph{quiver with self-duality} or \emph{symmetric quiver} (in the terminology of Derksen and Weyman \cite{DW}). In this section, we recall some well known facts about (symmetric) representation theory of $(\stackrel{\rightarrow}{A_n},\sigma)$ (see \cite{BCI}). 

The category of $\stackrel{\rightarrow}{A_n}$-representations is endowed with a self-duality that we denote by $\nabla$, defined naturally as follows: for a given $\stackrel{\rightarrow}{A_n}$-representation 
\begin{equation}\label{Eq:MEqui}
\xymatrix{
M:&M_1\ar^{f_1}[r]&M_2\ar^{f_2}[r]&\cdots \ar^{f_{n-2}}[r]&M_{n-1}\ar^{f_{n-1}}[r]&M_n,
}
\end{equation}
its (graded) dual is 
\begin{equation}\label{Eq:NablaMEqui}
\xymatrix{
\nabla M:&M_n^\ast\ar^{-f_{n-1}^\ast}[r]&M_{n-1}^\ast\ar^{-f_{n-2}^\ast}[r]&\cdots\ar^{-f_{2}^\ast}[r]&M_{2}^\ast\ar^{-f_{1}^\ast}[r]&M_1^\ast
}
\end{equation}
where we denote by $V^\ast= \Hom(V,\mathbb{C})$ the linear dual of a vector space $V$ and by $f^\ast$ the linear dual of a linear map $f$. This explains why $(\stackrel{\rightarrow}{A_n},\sigma)$ is called a quiver with self-duality. We denote by $\mathbf{dim}\,M=(\mathrm{dim}\,M_1,\cdots, \mathrm{dim}\, M_n)$ the dimension vector of $M$. It is well-known and easy to prove that the indecomposable representations of $\stackrel{\rightarrow}{A_n}$ are in bijection with the set $\{\ee_{i,j}|\, 1\leq i\leq j\leq n\}$ of positive roots of a simple Lie algebra of type $A_n$. For every ordered pair of vertices $1\leq i\leq j\leq n$, we denote by $U_{i,j}$ the indecomposable representation whose dimension vector is $\varepsilon_{i,j}=\ee_i+\cdots+ \ee_j$, where $\ee_i$ denotes the $i$-th element of the standard basis of $\RR^n$. We sometimes think of $U_{i,j}$ as the segment $[i,j]$. For every vertex $k$, the simple at $k$ is $S_k:=U_{k,k}$, its projective cover is $P_k:=U_{k,n}$ and its injective envelope is $I_k:=U_{1,k}$. We notice that  $\nabla U_{i,j}\simeq U_{\sigma(j),\sigma(i)}$, in particular $\nabla P_k\simeq I_{\sigma(k)}$.

Every $\stackrel{\rightarrow}{A_n}$-representation $M$ can be written in a unique way (up to reordering of summands) as a direct sum $M=\bigoplus_{1\leq i\leq j\leq n} U_{i,j}^{m_{i,j}}$ where $m_{i,j}$ denotes the multiplicity of $U_{i,j}$ as a direct summand of $M$. Thus $M$ can be thought of a collections of segments which form its \emph{coefficient quiver}.  We denote by $\mu(M)=(m_{i,j}|\, 1\leq i\leq j\leq n)$ the \emph{multiplicity sequence} of $M$.
 
For every $1\leq i<j\leq n$, we denote by $f_{i,j}^M=f_{j-1}\circ\cdots\circ f_i:M_i\rightarrow M_j$ the composite map and by $f_{i,i}^M=\mathrm{Id}_{M_i}$ the identity on $M_i$.  The \emph{rank sequence} of $M$ is the collection $r^M=(r_{i,j}^M)_{1\leq i\leq j\leq n}$ of the ranks $r_{i,j}^M=\mathrm{rank}(f_{i,j})$.  For example, the rank sequence of $U_{i,j}$ is 
$$
r^{U_{i,j}}_{k,\ell}=\left\{
\begin{array}{cc}
1&\textrm{if }i\leq k\leq \ell\leq j,\\
0&\textrm{otherwise}.
\end{array}
\right.
$$
We notice that 
\begin{equation}\label{Eq:RankNable}
r_{k,\ell}^{\nabla M}=r_{\sigma(\ell),\sigma(k)}^M, \textrm{ for every }1\leq k\leq\ell\leq n.
\end{equation} 
Throughout the paper we use the convention that $r_{i,j}^M=0$ for $i=0$ or for $j=n+1$ and $r_{i,j}^M=+\infty$ if $i>j$.

We write the rank sequences as  upper-triangular matrices, without writing down the lower part. We include the components $r_{i,i}:=\dim M_i$ on the diagonal. For example, the following is the rank sequence of the representation of $\stackrel{\rightarrow}{A_5}$ whose coefficient quiver is shown on the right (all the edges of the coefficient quiver point from left to right): 
$$
\begin{array}{cc}
\xymatrix@R=0pt@C=0pt{
1&1&1&1&0\\
  &2&2&2&1\\
  &  &4&2&1\\
  &  &  &2&1\\
  &  &  &  &1}
&
\xymatrix@R=0pt@C=5pt{
\bullet\ar@{-}[r]&\bullet\ar@{-}[r]&\bullet\ar@{-}[r]&\bullet&\\
                        &\bullet\ar@{-}[r]&\bullet\ar@{-}[r]&\bullet\ar@{-}[r]&\bullet\\
                        &                         &\bullet&&\\
                        &                         &\bullet&&}
\end{array}
$$

\begin{lem}\label{Lem:RankMultiplicity}
A sequence of non-negative integer $r=(r_{i,j}|\, 1\leq i\leq j\leq n)$ is the rank sequence of a representation of $\stackrel{\rightarrow}{A_n}$ if and only if it satisfies the inequalities 
\begin{eqnarray}\label{Eq:RankSeq1}
r_{i,j}\geq r_{i,j+1},&&\forall1\leq i\leq j<n;\\\label{Eq:RankSeq2}
r_{i-1,j}\leq r_{i,j},&&\forall1< i\leq j\leq n;\\\label{Eq:RankSeq3}
r_{i-1,j}-r_{i-1,j+1}\leq r_{i,j}-r_{i,j+1}, &&\forall1< i< j< n.
\end{eqnarray}
Moreover, the relation between the multiplicity sequence $\mu(M)=(m_{i,j})$ and the rank sequence $r^M$ of an $\stackrel{\rightarrow}{A_n}$-representation $M$ is 
\begin{equation}\label{Eq:MultiplicityRank}
m_{i,j}=r_{i,j}^M-r_{i,j+1}^M-r_{i-1,j}^M+r_{i-1,j+1}^M
\end{equation}
with the convention that $r_{-,n+1}^M=0=r_{0,-}^M$. In particular, two $\stackrel{\rightarrow}{A_n}$-representations are isomorphic if and only if they have the same rank sequence. 
\end{lem}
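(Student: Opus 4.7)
The plan is to reduce everything to Gabriel's decomposition $M=\bigoplus_{1\leq i\leq j\leq n}U_{i,j}^{m_{i,j}}$ combined with the explicit formula $r^{U_{i,j}}_{k,\ell}=1$ iff $i\leq k\leq\ell\leq j$ recalled just above. Since rank is additive on direct sums, this yields the key identity
\[
r_{i,j}^M=\sum_{a\leq i,\,j\leq b} m_{a,b}, \qquad (\ast)
\]
from which all three assertions will follow by a two-step discrete M\"obius inversion.

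For necessity of the inequalities, inequality (\ref{Eq:RankSeq1}) follows from the factorization $f_{i,j+1}^M=f_j\circ f_{i,j}^M$ and (\ref{Eq:RankSeq2}) from $f_{i-1,j}^M=f_{i,j}^M\circ f_{i-1}$, since rank is submultiplicative under composition. For the multiplicity formula (\ref{Eq:MultiplicityRank}), I would apply two successive discrete derivatives to $(\ast)$: first
\[
r_{i,j}^M-r_{i,j+1}^M=\sum_{a\leq i}m_{a,j}
\]
by telescoping in the second variable, and then
\[
r_{i,j}^M-r_{i,j+1}^M-r_{i-1,j}^M+r_{i-1,j+1}^M=m_{i,j}
\]
by telescoping in the first variable. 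The conventions $r_{0,-}^M=0=r_{-,n+1}^M$ make this valid uniformly, including at the corners. Since the $m_{i,j}$ are non-negative integers, (\ref{Eq:MultiplicityRank}) immediately yields the remaining inequality (\ref{Eq:RankSeq3}).

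For sufficiency, let $r=(r_{i,j})$ be a non-negative integer sequence satisfying (\ref{Eq:RankSeq1})--(\ref{Eq:RankSeq3}). Define $m_{i,j}$ by (\ref{Eq:MultiplicityRank}); then $m_{i,j}\geq 0$, the interior cases coming from (\ref{Eq:RankSeq3}), the edge cases $i=1$ and $j=n$ from (\ref{Eq:RankSeq1}) and (\ref{Eq:RankSeq2}) respectively, and the corner $m_{1,n}=r_{1,n}\geq 0$ by assumption. Setting $M:=\bigoplus U_{i,j}^{m_{i,j}}$, the identity $(\ast)$ holds for $M$, and running the telescoping argument backwards recovers $r^M=r$. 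The uniqueness clause is then immediate: two representations sharing the same rank sequence have identical multiplicity sequences by (\ref{Eq:MultiplicityRank}), and hence are isomorphic.

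I do not anticipate any substantive obstacle; the lemma is essentially a bookkeeping consequence of Gabriel's classification in type $A$. The only point requiring some care is the consistent treatment of the boundary indices when invoking the telescoping formulas, so that the single formula (\ref{Eq:MultiplicityRank}) covers the edge and corner cases without a separate argument.
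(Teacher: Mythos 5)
Your proof is correct and follows essentially the same route as the paper: both rest on the decomposition $M=\bigoplus U_{i,j}^{m_{i,j}}$, the additivity of ranks over direct sums, and the inversion of $r_{i,j}^M=\sum_{a\leq i,\, j\leq b}m_{a,b}$ to get \eqref{Eq:MultiplicityRank}, with sufficiency handled by building $M$ from the inverted multiplicities. The only (harmless) divergence is that you obtain \eqref{Eq:RankSeq3} as a corollary of the non-negativity of the $m_{i,j}$, whereas the paper proves it directly from the identity $r_{i,j}^M-r_{i,j+1}^M=\dim(\im(f_{i,j})\cap\ker(f_j))$ and the inclusion $\im(f_{i-1,j})\cap\ker(f_j)\subseteq\im(f_{i,j})\cap\ker(f_j)$.
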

\begin{proof}
If $M=((M_i)_{i=1}^n, (f_i:M_i\rightarrow M_{i+1})_{i=1}^{n-1})$ is a representation of $\stackrel{\rightarrow}{A_n}$ then
$$
\begin{array}{lr}
f_j(\mathrm{Im}(f_{i,j}))\subseteq \im(f_{i,j+1})&(1\leq i\leq j<n),\\
\im(f_{i-1,j})\subseteq \im(f_{i,j})&(1< i\leq j\leq n),\\
\im(f_{i-1,j})\cap\ker(f_j)\subseteq \im(f_{i,j})\cap\ker(f_j)&(1< i\leq j<n).
\end{array}
$$ 
The inequalities \eqref{Eq:RankSeq1} and \eqref{Eq:RankSeq2} follow directly; the inequality \eqref{Eq:RankSeq3} follows from the fact that $r_{i,j}^M-r_{i,j+1}^M=\dim\im(f_{i,j})\cap\ker(f_j)$.  
Equation~\eqref{Eq:MultiplicityRank} is obtained by inverting the following obvious formula
\begin{equation}\label{Eq:RankMultiplicity}
r^M_{i,j}=\sum_{k\leq i, j\leq \ell}m_{k,\ell}.
\end{equation}
To prove the converse, we let $r=(r_{i,j})_{1\leq i\leq j\leq n}$ to be a sequence of non-negative integers satisfying the inequalities \eqref{Eq:RankSeq1}, \eqref{Eq:RankSeq2} and \eqref{Eq:RankSeq3}. In particular, for every $1\leq i\leq j\leq n$, the integer $m_{i,j}:=r_{i,j}-r_{i,j+1}-r_{i-1,j}+r_{i-1,j+1}$ is non-negative. Thus, by \eqref{Eq:RankMultiplicity}, the representation $M=\bigoplus U_{i,j}^{m_{i,j}}$ is such that  $r^M=r$.
\end{proof}
The Ringel form of $\stackrel{\rightarrow}{A_n}$ is the form $\langle-,-\rangle_n:\ZZ^n\times\ZZ^n\rightarrow \ZZ$ defined by $\langle\mathbf{d},\mathbf{e}\rangle_n=\sum_{i=1}^n\mathbf{d}_i\mathbf{e}_i-\sum_{i=1}^{n-1}\mathbf{d}_i\mathbf{e}_{i+1}$. Given two $\stackrel{\rightarrow}{A_n}$-representations $M$ and $N$ we denote by $[M,N]=\dim\Hom(M,N)$ and $[M,N]^1=\dim \mathrm{Ext}^1(M,N)$. It is well-known that 
\begin{equation}\label{Eq:EulerForm}
[M,N]-[M,N]^1=\langle\textbf{dim}\,M,\textbf{dim}\,N\rangle_n.
\end{equation}

We have
\begin{equation}\label{Eq:HomIndEquioriented}
[U_{i,j},U_{k,\ell}]=\left\{\begin{array}{cc}1&\textrm{if }k\leq i\leq \ell\leq j,\\0&\mathrm{otherwise.}\end{array}\right.
\end{equation}
The AR-functors act on the indecomposables by 
$\tau U_{i,j}=U_{i+1,j+1}$ (for $j<n$) and $\tau^-U_{k,\ell}=U_{k-1,\ell-1}$ (for $k>1$). Then from the AR-formulas we get the analogous formulas for the Ext-spaces:
\begin{equation}\label{Eq:ExtIndEquioriented}
[U_{k,\ell},U_{i,j}]^1=\left\{\begin{array}{cc}1&\textrm{if }k\leq i\leq \ell\leq j,\\0&\mathrm{otherwise.}\end{array}\right.
\end{equation}

\begin{lem}\label{Lem:Embeddings}
Let $M$ be a $\stackrel{\rightarrow}{A_n}$-representation and let $1\leq i\leq j\leq n$. Then 
\begin{enumerate}
\item $U_{i,j}$ embeds into $M$ if and only if $r_{i,j}^M-r_{i,j+1}^M>0$.
\item $U_{i,j}$ is a quotient of $M$ if and only if $r_{i,j}^M-r_{i-1,j}^M>0$.
\item $U_{i,j}$ is a direct summand of $M$ if and only if $r_{i,j}^M-r_{i,j+1}^M>r_{i-1,j}^M-r_{i-1,j+1}^M$. 
\end{enumerate}
(We use again the convention that $r_{-,n+1}^M=0=r_{0,-}^M$.)
\end{lem}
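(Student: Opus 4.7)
The plan is to use the decomposition $M=\bigoplus_{k\le \ell} U_{k,\ell}^{m_{k,\ell}}$ together with the formulas \eqref{Eq:MultiplicityRank} and \eqref{Eq:RankMultiplicity} of Lemma~\ref{Lem:RankMultiplicity}. Part (iii) will be essentially a direct reading of \eqref{Eq:MultiplicityRank}; part (i) requires a small structural argument with the maps $f_{i,k}^M$; and part (ii) follows from (i) by dualising via $\nabla$.

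The main step is part (i). I would unwind the definition of a morphism $\phi\colon U_{i,j}\to M$: such a $\phi$ is determined by $w_i:=\phi_i(v_i)\in M_i$ (where $v_i$ is a basis vector of the $i$-th piece of $U_{i,j}$), and the only constraint on $w_i$ is $f_{i,j+1}^M(w_i)=0$, coming from the vanishing of $U_{i,j}$ past position $j$. The component $\phi_k$ at any $i\le k\le j$ is the map $\mathbb{C}\to M_k$, $v_k\mapsto f_{i,k}^M(w_i)$, so injectivity of $\phi$ amounts to $f_{i,k}^M(w_i)\neq 0$ for all such $k$, which reduces to $f_{i,j}^M(w_i)\neq 0$ because the kernels of the $f_{i,k}^M$ are nested. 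Thus $U_{i,j}$ embeds in $M$ iff $\ker f_{i,j+1}^M\supsetneq \ker f_{i,j}^M$, and a rank--nullity count on $M_i$ converts this into $r_{i,j}^M-r_{i,j+1}^M>0$.

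Given (i), part (ii) is formal: the self-duality $\nabla$ turns quotients into subobjects and sends $U_{i,j}$ to $U_{\sigma(j),\sigma(i)}$, so $U_{i,j}$ is a quotient of $M$ iff $U_{\sigma(j),\sigma(i)}$ embeds in $\nabla M$; applying (i) and rewriting ranks via \eqref{Eq:RankNable}, together with the identity $\sigma(\sigma(i)+1)=i-1$, produces $r_{i,j}^M>r_{i-1,j}^M$. Part (iii) is then immediate from the Krull--Schmidt decomposition: $U_{i,j}$ is a direct summand iff $m_{i,j}>0$, and \eqref{Eq:MultiplicityRank} is exactly the claimed inequality. The only real obstacle is keeping the boundary behaviour consistent in part (i) (indices $i=1$ or $j=n$), which is handled uniformly by the convention $r_{-,n+1}^M=0=r_{0,-}^M$ recalled just before the lemma.
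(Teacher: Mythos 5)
Your proposal is correct and follows essentially the same route as the paper: part (i) by exhibiting a vector $v\in M_i$ with $f_{i,j}(v)\neq 0$ and $f_{i,j+1}(v)=0$ and converting this to the rank inequality by a dimension count, and part (iii) by reading off the multiplicity formula \eqref{Eq:MultiplicityRank}. The only (harmless) difference is in part (ii), which the paper proves directly via the condition $\im(f_{i-1,j})\subsetneq\im(f_{i,j})$ rather than by dualising with $\nabla$ as you do; both give $r_{i,j}^M>r_{i-1,j}^M$.
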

\begin{proof}
The indecomposable representation $L=U_{i,j}$ embeds into $M$ if and only if there exists a vector $v\in M_i$ such that $f_{i,j}(v)$ is non-zero and $f_j(f_{i,j}(v))=0$. Thus, $L$ embeds into $M$ if and only if  $\im(f_{i,j})\cap\ker(f_j)$ is non-zero, proving (i). 

To prove (ii), we observe that $L=U_{i,j}$ is a quotient of $M$ if and only if there exists a  vector $v\in M_i$ which does not lie in the image of $f_{i-1}$ and $f_{i,j}(v)\neq 0$. This is the case, if and only if $f_{i,j}(\im f_{i-1})\subsetneq \im (f_{i,j})$.

Part (iii) has already been proved in  Lemma~\ref{Lem:RankMultiplicity}.
\end{proof}
\begin{lem}
Let $M$ be a $\stackrel{\rightarrow}{A_n}$-representation. Then for every $1\leq k\leq \ell\leq n$, 
\begin{eqnarray}\label{Eq:EquiorientedHom(M,U)Rank}
[M,U_{k,\ell}]&=&r_{\ell,\ell}^M-r_{k-1,\ell}^M,\\\label{Eq:EquiorientedHom(U,M)Rank}
[U_{k,\ell},M]&=&r_{k,k}^M-r_{k,\ell+1}^M,
\end{eqnarray}
with the convention that $r_{0,-}^M=0=r_{-,n+1}^M$. 
\end{lem}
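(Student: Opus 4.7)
The plan is to compute both sides directly by exploiting the very simple structure of the indecomposable $U_{k,\ell}$, whose graded components are one-dimensional on the segment $[k,\ell]$ and zero elsewhere. For the first identity, a morphism $\phi: M\to U_{k,\ell}$ is a tuple $(\phi_i:M_i\to (U_{k,\ell})_i)$; since $(U_{k,\ell})_i=\mathbb{C}$ for $k\leq i\leq \ell$ and vanishes otherwise, and the structure maps of $U_{k,\ell}$ are all identities on $[k,\ell]$, the commuting squares force $\phi_i=\phi_\ell\circ f_{i,\ell}$ for every $k\leq i\leq \ell$. The only non-automatic constraint comes from the square at the arrow $k-1\to k$, which (since $\phi_{k-1}=0$) reads $\phi_\ell\circ f_{k-1,\ell}=0$. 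Hence the natural evaluation map
\[
\Hom(M,U_{k,\ell})\;\xrightarrow{\;\sim\;}\;\bigl(M_\ell/\im f_{k-1,\ell}\bigr)^{\ast},\qquad \phi\mapsto \phi_\ell,
\]
is an isomorphism, giving $[M,U_{k,\ell}]=\dim M_\ell-r_{k-1,\ell}^M=r_{\ell,\ell}^M-r_{k-1,\ell}^M$. The convention $r_{0,\ell}^M=0$ handles the case $k=1$ cleanly.

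The argument for \eqref{Eq:EquiorientedHom(U,M)Rank} is dual. A morphism $\psi:U_{k,\ell}\to M$ is determined by $v_k:=\psi_k(1)\in M_k$, because the commuting squares on $[k,\ell]$ force $\psi_i(1)=f_{k,i}(v_k)$. The only non-trivial constraint is at the arrow $\ell\to\ell+1$: since $(U_{k,\ell})_{\ell+1}=0$, we need $f_\ell(\psi_\ell(1))=f_{k,\ell+1}(v_k)=0$. Thus
\[
\Hom(U_{k,\ell},M)\;\xrightarrow{\;\sim\;}\;\ker f_{k,\ell+1},\qquad \psi\mapsto v_k,
\]
so $[U_{k,\ell},M]=\dim M_k-r_{k,\ell+1}^M=r_{k,k}^M-r_{k,\ell+1}^M$. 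When $\ell=n$, the convention $r_{k,n+1}^M=0$ makes the constraint vacuous, as expected.

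As an alternative one could decompose $M=\bigoplus_{i\leq j}U_{i,j}^{m_{i,j}}$ and apply \eqref{Eq:HomIndEquioriented}, obtaining $[M,U_{k,\ell}]=\sum_{k\leq i\leq \ell\leq j}m_{i,j}$; the identity \eqref{Eq:RankMultiplicity} (equivalently \eqref{Eq:RankMultiplicity}'s inverse form $r_{i,j}^M=\sum_{a\leq i,\,j\leq b}m_{a,b}$) then telescopes this sum to $r_{\ell,\ell}^M-r_{k-1,\ell}^M$, and similarly for $[U_{k,\ell},M]$. The proof is essentially routine; the only point requiring attention is the careful bookkeeping of the boundary conventions $r_{0,-}^M=r_{-,n+1}^M=0$, which make the formulas uniform across the edge cases $k=1$ and $\ell=n$.
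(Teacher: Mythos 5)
Your proof is correct, and your primary argument is genuinely different from the paper's. The paper deduces \eqref{Eq:EquiorientedHom(M,U)Rank} from the decomposition $M=\bigoplus U_{i,j}^{m_{i,j}}$ together with the Hom-formula \eqref{Eq:HomIndEquioriented} for pairs of indecomposables and the multiplicity--rank relation \eqref{Eq:MultiplicityRank}, telescoping the resulting double sum exactly as in your closing paragraph (and it dismisses \eqref{Eq:EquiorientedHom(U,M)Rank} as ``similar''). Your main route instead identifies the Hom spaces intrinsically, $\Hom(M,U_{k,\ell})\simeq\bigl(M_\ell/\im f_{k-1,\ell}\bigr)^\ast$ and $\Hom(U_{k,\ell},M)\simeq\ker f_{k,\ell+1}$, by chasing the commuting squares; this is more elementary in that it does not presuppose \eqref{Eq:HomIndEquioriented} or \eqref{Eq:MultiplicityRank} (indeed it reproves the former as the special case $M=U_{i,j}$), and it exhibits the Hom spaces themselves rather than just their dimensions, which is occasionally useful. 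What the paper's approach buys is brevity given the lemmas already on record. Both arguments are complete; your handling of the boundary conventions at $k=1$ and $\ell=n$ and of the vacuous squares outside the segment $[k,\ell]$ is accurate.
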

\begin{proof}
By \eqref{Eq:HomIndEquioriented} we have
\begin{eqnarray*}
[M,U_{k,\ell}]&=&\sum_{k\leq i\leq \ell\leq j}m_{i,j}\\
&=&\sum_{k\leq i\leq \ell\leq j}r_{i,j}^M-r_{i,j+1}^M-r_{i-1,j}^M+r_{i-1,j+1}^M\\
&=&r_{\ell,\ell}^M-r_{k-1,\ell}^M.
\end{eqnarray*}
The proof of \eqref{Eq:EquiorientedHom(U,M)Rank} is similar. 
\end{proof}
Given a graded vector space $M^0=M_1\oplus\cdots \oplus M_n$, we denote by $R(M^0)=\prod_{i=1}^{n-1}\Hom(M_i,M_{i+1})$ the affine space parametrizing representations of $\stackrel{\rightarrow}{A_n}$ with underlying vector space $M^0$. The group $\GL^\bullet(M^0)=\prod_{i=1}^n \GL(M_i)$ acts on $R(M^0)$ by change of basis and the $\GL^\bullet(M^0)$-orbits correspond to isoclasses of representations. Given $M, N\in R(M^0)$ we denote by  $\leq_{\mathrm{deg}}$ the \emph{degeneration order}, defined by $M\degg N$ if $N\in \overline{\GL^\bullet(M^0)\cdot M}$ and by $r^M\geq r^N$ the rank ordering defined by $r_{i,j}^M\geq r_{i,j}^N$ and $r_{i,i}^M=r_{i,i}^N$ for every $1\leq i\leq j\leq n$. There is a third ordering, defined by Abeasis and Del Fra \cite[Section~3]{AbeasisDelFraEquioriented} which is called the \emph{combinatorial ordering} defined  by $M\leq_c N$ if the coefficient quiver of $N$ is obtained from the coefficient quiver of $M$ by performing a finite number of \emph{cuts} and \emph{shifts} (see Table~\ref{Table:Cut} and \ref{Table:Shift}).
\begin{table}[htbp]
\begin{tabular}{|c|c|}
\hline
$
\xymatrix@R=0pt@C=7pt{
t&&q-1&q&&s\\
*{\bullet}\ar@{-}[rrrrr]&&*{\bullet}&*{\bullet}&&*{\bullet}
}
$
&
$
\xymatrix@R=0pt@C=7pt{
t&&q-1&q&&s\\
*{\bullet}\ar@{-}[rr]&&*{\bullet}&*{\bullet}\ar@{-}[rr]&&*{\bullet}
}
$
\\
$M$&$N$\\\hline
\end{tabular}
\caption{A cut from $M$ to $N$ at column $q-1$ of the segment $[t,s]\subset M$}
\label{Table:Cut}
\end{table}
\begin{table}[htbp]
\begin{tabular}{|c|c|}
\hline
$
\xymatrix@R=0pt@C=7pt{
t&q&&r&s\\
*{\bullet}\ar@{-}[rrrr]&\bullet&&\bullet&*{\bullet}\\
&*{\bullet}\ar@{-}[rr]&&*{\bullet}&
}
$
&
$
\xymatrix@R=0pt@C=7pt{
t&q&&r&s\\
*{\bullet}\ar@{-}[rrr]&\bullet&&*{\bullet}&\\
&*{\bullet}\ar@{-}[rrr]&&\bullet&*{\bullet}
}
$
\\
$M$&$N$\\\hline
\end{tabular}
\caption{A shift from $M$ to $N$ of the segments $[r,s]$ from $[t,s]$ to $[q,r]$}
\label{Table:Shift}
\end{table}
\begin{lem}[{\cite[Lemmas 3.3 and 3.4]{AbeasisDelFraEquioriented}}]\label{Lem:RankCutShift} Let $M,N\in R(M^0)$. 
\begin{enumerate}
\item $N$ is obtained from $M$ by a cut at column $q-1$ of the segment $[t,s]\subset M$ if and only if 
$$
r^N_{k,\ell}=
\left\{
\begin{array}{ll}
r^M_{k,\ell}-1&\textrm{if }t\leq k<q\leq \ell\leq s,\\&\\
r^M_{k,\ell}&\textrm{otherwise.}
\end{array}
\right.
$$ 
\item $N$ is obtained from $M$ by a shift of the segment $[r,s]$ from $[t,s]$ to $[q,r]$ if and only if 
$$
r^N_{k,\ell}=
\left\{
\begin{array}{ll}
r^M_{k,\ell}-1&\textrm{if }t\leq k< q\leq r< \ell\leq s\\&\\
r^M_{k,\ell}&\textrm{otherwise.}
\end{array}
\right.
$$ 
\end{enumerate}
\end{lem}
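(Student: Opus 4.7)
The plan is to reduce both parts to elementary bookkeeping with multiplicities via the rank-from-multiplicity formula~\eqref{Eq:RankMultiplicity}, together with its inverse~\eqref{Eq:MultiplicityRank}. The forward direction of each biconditional computes the change in rank from the change in multiplicities; the converse feeds the prescribed rank differences into~\eqref{Eq:MultiplicityRank} to extract $\mu(N) - \mu(M)$ and then checks that this is exactly the multiplicity change produced by a cut, respectively a shift, on the coefficient quiver.

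For (i), a cut at column $q-1$ of the segment $[t,s]$ replaces $U_{t,s}$ by $U_{t,q-1} \oplus U_{q,s}$, so $\mu(N) - \mu(M)$ is $-1$ at $(t,s)$ and $+1$ at each of $(t,q-1)$ and $(q,s)$. Substituting into~\eqref{Eq:RankMultiplicity}, the difference $r^N_{k,\ell} - r^M_{k,\ell}$ is a signed count of which of these three pairs $(i,j)$ satisfy $i \leq k$ and $\ell \leq j$. Since the two sub-intervals $[t,q-1]$ and $[q,s]$ are disjoint, no $[k,\ell]$ fits inside both; hence the difference equals $-1$ exactly when $[k,\ell] \subseteq [t,s]$ while fitting inside neither sub-piece, i.e.\ $t \leq k < q \leq \ell \leq s$, and $0$ otherwise.

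For (ii), the shift trades $U_{t,s} \oplus U_{q,r}$ for $U_{t,r} \oplus U_{q,s}$. Writing $a,b,c,d$ for the four indicator variables $\mathbf{1}_{t \leq k}$, $\mathbf{1}_{q \leq k}$, $\mathbf{1}_{\ell \leq r}$, $\mathbf{1}_{\ell \leq s}$, the rank increment produced by~\eqref{Eq:RankMultiplicity} simplifies to $ac + bd - ad - bc = (a-b)(c-d)$. Since $t < q \leq r < s$ forces $b \leq a$ and $c \leq d$, the two factors take values in $\{0,1\}$ and $\{-1,0\}$; the product equals $-1$ precisely when $a=1, b=0, c=0, d=1$, namely $t \leq k < q \leq r < \ell \leq s$. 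In both parts the converse direction runs the same computation in reverse through~\eqref{Eq:MultiplicityRank}, reading off the multiplicity changes from the rank differences and matching them against a cut or a shift. The whole argument is purely computational; the only step I expect to require any care is the implication-based case analysis behind the shift, since the disjointness argument for the cut is immediate.
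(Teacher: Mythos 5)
Your proof is correct. The paper itself gives no argument for this lemma --- it is quoted verbatim from Abeasis--Del Fra --- so there is nothing internal to compare against, but your reduction to the mutually inverse formulas \eqref{Eq:RankMultiplicity} and \eqref{Eq:MultiplicityRank} is exactly the right mechanism: the cut case is the disjointness observation you make, and the factorization $ac+bd-ad-bc=(a-b)(c-d)$ with $a\geq b$, $c\leq d$ cleanly disposes of the shift case. For the converse direction the only point worth writing out is that the inverted multiplicity differences force $m^M_{t,s}\geq 1$ (resp.\ $m^M_{t,s},m^M_{q,r}\geq 1$), so the cut (resp.\ shift) is actually performable on $M$; this is implicit in your ``matching'' step and is immediate from non-negativity of $\mu(N)$.
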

We recall from \cite{Bongartz} that for algebras of finite representation type (e.g. every Dynkin quiver), every subrepresentation $L$ of a representation  $M$ admits a generic quotient, i.e. 
A quotient $Q$ of $M$ by $L$ is called generic if its rank sequence is maximal among all possible quotients of $M$ by $L$. It is well-known that generic quotients exist \cite[Theorem~2.4]{Bongartz}. The next result describes the rank sequence of the generic quotient of a representation $M$ by an \emph{indecomposable} representation $L$. 

\begin{prop}\label{Prop:RankSequenceQuotient}
Let $M$ be a $\stackrel{\rightarrow}{A_n}$-representation and let $L=U_{q,s}$. Suppose that $L$ embeds into $M$ and let $\iota:L\hookrightarrow M$ be a generic embedding. Let $Q=M/\iota(L)$ be the generic quotient. Then  the rank sequence $r^Q=(r_{k,\ell}^Q)$ of $Q$ is obtained from the rank sequence $r^M=(r_{k,\ell}^M)$ of $M$ as follows:
\begin{equation}\label{Eq:RankGenericQuotient}
r_{k,\ell}^Q=\left\{
\begin{array}{ll}
r_{k,\ell}^M-1&\text{if }r_{q,\ell}^M-r_{q,s+1}^M\leq r_{k,\ell}^M-r_{k,s+1}^M\text{ and }q\leq\ell\leq s,\\
r_{k,\ell}^M&\text{otherwise.}
\end{array}
\right.
\end{equation}
\end{prop}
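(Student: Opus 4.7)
The plan is to parametrize embeddings $\iota\colon U_{q,s}\hookrightarrow M$ by a single vector and to read off $r^Q_{k,\ell}$ from the short exact sequence $0\to L\xrightarrow{\iota}M\to Q\to 0$. Such an embedding is completely determined by $v_q:=\iota_q(1)\in M_q$, which must satisfy $f^M_{q,s+1}(v_q)=0$ (so that $\iota$ factors through $U_{q,s}$) and $f^M_{q,s}(v_q)\neq 0$ (so that the support is exactly $[q,s]$, the latter also forcing $v_k:=f^M_{q,k}(v_q)\neq 0$ for every $q\leq k\leq s$). Then $\iota_\ell(L_\ell)=\mathbb{C}v_\ell$ for $\ell\in[q,s]$ and $0$ otherwise. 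By Lemma~\ref{Lem:Embeddings}(i), the admissible $v_q$ form a non-empty Zariski-open subset of the irreducible affine space $\ker(f^M_{q,s+1})$.

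From the induced commutative squares, $\im(f^Q_{k,\ell})\cong \im(f^M_{k,\ell})/(\im(f^M_{k,\ell})\cap \iota_\ell(L_\ell))$, so
\[
r^Q_{k,\ell}\;=\;r^M_{k,\ell}-\dim\bigl(\im(f^M_{k,\ell})\cap\iota_\ell(L_\ell)\bigr).
\]
If $\ell\notin[q,s]$, the intersection is zero and $r^Q_{k,\ell}=r^M_{k,\ell}$, matching the ``otherwise'' case of~\eqref{Eq:RankGenericQuotient}. If $q\leq k\leq\ell\leq s$, then $v_\ell=f^M_{k,\ell}(v_k)\in\im(f^M_{k,\ell})$ unconditionally, and the inclusion $\im(f^M_{q,\ell})\subseteq\im(f^M_{k,\ell})$ renders the hypothesized inequality $r^M_{q,\ell}-r^M_{q,s+1}\leq r^M_{k,\ell}-r^M_{k,s+1}$ automatic; hence $r^Q_{k,\ell}=r^M_{k,\ell}-1$ as required.

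The delicate case is $k<q\leq\ell\leq s$; here the relevant subspace is $V_\ell:=f^M_{q,\ell}(\ker f^M_{q,s+1})\subseteq M_\ell$. Using the factorization $f^M_{q,s+1}=f^M_{\ell,s+1}\circ f^M_{q,\ell}$ one identifies $V_\ell=\im(f^M_{q,\ell})\cap\ker(f^M_{\ell,s+1})$, and a rank-nullity count gives $\dim V_\ell=r^M_{q,\ell}-r^M_{q,s+1}$. Repeating the computation with $k$ in place of $q$ and using $\im(f^M_{k,\ell})\subseteq \im(f^M_{q,\ell})$ yields
\[
\dim\bigl(V_\ell\cap\im(f^M_{k,\ell})\bigr)\;=\;\dim\bigl(\im(f^M_{k,\ell})\cap\ker(f^M_{\ell,s+1})\bigr)\;=\;r^M_{k,\ell}-r^M_{k,s+1}.
\]
Since a generic $v_q\in\ker(f^M_{q,s+1})$ produces a generic $v_\ell\in V_\ell$, the element $v_\ell$ lies in the subspace $V_\ell\cap\im(f^M_{k,\ell})$ precisely when the two subspaces coincide, that is, precisely when $r^M_{q,\ell}-r^M_{q,s+1}\leq r^M_{k,\ell}-r^M_{k,s+1}$ (the reverse inequality being automatic in this range). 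Combining the three cases gives formula~\eqref{Eq:RankGenericQuotient}; moreover, a finite intersection of Zariski-open conditions on the irreducible space $\ker(f^M_{q,s+1})$ produces a single $v_q$ attaining every maximum simultaneously, so that the resulting $Q$ is indeed the generic quotient in the sense of Bongartz. The main obstacle, once the parametrization is fixed, is the bookkeeping of boundary conventions ($r^M_{0,-}=0$, $r^M_{-,n+1}=0$) required to make the dimension identity $\dim(\im(f^M_{k,\ell})\cap\ker f^M_{\ell,s+1})=r^M_{k,\ell}-r^M_{k,s+1}$ hold uniformly, in particular when $s=n$ or when $k=1$.
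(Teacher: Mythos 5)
Your proof is correct and follows essentially the same route as the paper's: both parametrize the embedding by the generator $v_q\in\ker(f_{q,s+1})$, reduce the rank drop at $(k,\ell)$ to whether the generic $v_\ell=f_{q,\ell}(v_q)$ lies in $\im(f_{k,\ell})$, and translate this via $\dim(\im f_{k,\ell}\cap\ker f_{\ell,s+1})=r^M_{k,\ell}-r^M_{k,s+1}$ into the stated inequality. Your explicit identity $r^Q_{k,\ell}=r^M_{k,\ell}-\dim\bigl(\im f^M_{k,\ell}\cap\iota_\ell(L_\ell)\bigr)$ and the remark that finitely many open conditions on the irreducible space $\ker(f_{q,s+1})$ can be met simultaneously merely make explicit what the paper leaves implicit.
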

\begin{proof}
An embedding of $L$ into $M$ is the span of $\{v_q, v_{q+1},\cdots, v_{s}\}$ where $v_\ell\in M_\ell$ is  non-zero, $v_\ell=f_{q,\ell}(v_q)$,  $v_\ell\in \im(f_{q,\ell})\cap\ker(f_{\ell,s+1})$ and $v_\ell\not\in\ker(f_{\ell,s})$, for every $q\leq \ell\leq s$. 
We notice that in $M_\ell$ there is a chain of vector subspaces:
\begin{equation}\label{Eq:Chain}
(\im f_{1,\ell}\cap\ker f_{\ell,s+1})\subseteq (\im f_{2,\ell}\cap\ker f_{\ell,s+1})\subseteq\cdots \subseteq(\im f_{q,\ell}\cap\ker f_{\ell,s+1}).
\end{equation}
In order $\iota(L)$ to be generic, the quotient $Q=M/\iota(L)$ must have the greatest rank sequence among all possible quotients of $M$ by $L$. 
Thus a generic $v_\ell$ as above does not belong to a subspace $(\im f_{k,\ell}\cap\ker f_{\ell,s+1})$ which is strictly contained in $(\im f_{q,\ell}\cap\ker f_{\ell,s+1})$. Since 
$
\dim(\im f_{k,\ell}\cap\ker f_{\ell,s+1})=r_{k,\ell}^M-r_{k,s+1}^M
$
we see that $r_{k,\ell}^M-r_{k,s+1}^M\leq r_{q,\ell}^M-r_{q,s+1}^M$ for $k\leq q\leq \ell\leq s$. From this we get the first condition of \eqref{Eq:RankGenericQuotient} (for $k\leq q$). Suppose now that $q<k\leq \ell\leq s$. Then $(\im f_{q,\ell}\cap\ker f_{\ell,s+1})\subseteq \im f_{k,\ell}\cap\ker f_{\ell,s+1}$ and hence $v_\ell$ must belong to this subspace. We notice that $r_{q,\ell}^M-r_{q,s+1}^M\leq r_{k,\ell}^M-r_{k,s+1}^M$ (for $q<k\leq s$) and hence we get the first condition of \eqref{Eq:RankGenericQuotient}. If $\ell<q$ or $\ell>s$ there are no conditions to impose. This proves \eqref{Eq:RankGenericQuotient}. 
\end{proof}

\begin{cor}
Let $M$ be a $\stackrel{\rightarrow}{A_n}$-representation and let $L=U_{q,s}$. Suppose that $L$ embeds into $M$ and let $\iota:L\hookrightarrow M$ be a generic embedding. Let $Q=M/\iota(L)$ be the generic quotient. Suppose that $L$ is not a direct summand of $M$. Then $q>1$ and the rank sequence of $L\oplus Q$ is 
\begin{equation}\label{Eq:RankGenericDegeneration}
r_{k,\ell}^{L\oplus Q}=\left\{
\begin{array}{ll}
r_{k,\ell}^M-1&\text{if }r_{q,\ell}^M-r_{q,s+1}^M= r_{k,\ell}^M-r_{k,s+1}^M\text{ and }k<q\leq\ell\leq s,\\
r_{k,\ell}^M&\text{otherwise.}
\end{array}
\right.
\end{equation}
In particular, $M\leq_c L\oplus Q$. 
\end{cor}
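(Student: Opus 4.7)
The plan is to verify the three assertions of the corollary in sequence.

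First, I would deduce $q > 1$ from Lemma~\ref{Lem:Embeddings}. Since $L$ embeds in $M$, part~(i) gives $r_{q,s}^M - r_{q,s+1}^M > 0$; since $L$ is not a direct summand, part~(iii) gives $r_{q,s}^M - r_{q,s+1}^M \leq r_{q-1,s}^M - r_{q-1,s+1}^M$. If $q$ were $1$, the convention $r_{0,-}^M = 0$ would force $r_{1,s}^M - r_{1,s+1}^M \leq 0$, contradicting the first inequality.

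Second, I would derive \eqref{Eq:RankGenericDegeneration} from $r^{L \oplus Q} = r^L + r^Q$ by a case analysis. The indecomposable $L = U_{q,s}$ satisfies $r^L_{k,\ell} = 1$ exactly when $q \leq k \leq \ell \leq s$, and Proposition~\ref{Prop:RankSequenceQuotient} tells us when $r^Q_{k,\ell} = r^M_{k,\ell} - 1$. The key observation is: for $q \leq \ell \leq s$ and $k \geq q$ the factorization $f_{q,\ell}^M = f_{k,\ell}^M \circ f_{q,k}^M$ yields $\im f_{q,\ell}^M \subseteq \im f_{k,\ell}^M$, so the hypothesis of Proposition~\ref{Prop:RankSequenceQuotient} holds automatically; then $r^Q = r^M - 1$ is exactly compensated by $r^L = 1$. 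For $q \leq \ell \leq s$ and $k < q$ the factorization runs the opposite way, $f_{k,\ell}^M = f_{q,\ell}^M \circ f_{k,q}^M$, giving the reverse inclusion; the Proposition's hypothesis then reduces to the equality $r_{q,\ell}^M - r_{q,s+1}^M = r_{k,\ell}^M - r_{k,s+1}^M$, in which case $r^L = 0$ and we obtain the change $-1$. Outside the strip $q \leq \ell \leq s$, both $r^L$ and the change in $r^Q$ vanish. These cases assemble to \eqref{Eq:RankGenericDegeneration}.

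Finally, for $M \leq_c L \oplus Q$, I would invoke the equivalence between the rank and combinatorial orderings proved by Abeasis and Del Fra \cite{AbeasisDelFraEquioriented}. From \eqref{Eq:RankGenericDegeneration} the rank sequence of $L \oplus Q$ is componentwise bounded by that of $M$, and the only positions where the value strictly decreases satisfy $k < q \leq \ell$; in particular $k < \ell$, so the diagonal values $r_{i,i} = \dim M_i$ are unaffected. Hence $r^M \geq r^{L \oplus Q}$ in the rank ordering, which via the equivalence yields $M \leq_c L \oplus Q$.

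The main obstacle is the case distinction in the second paragraph: one must correctly reverse the direction of image containment between $\im f_{k,\ell}^M$ and $\im f_{q,\ell}^M$ depending on whether $k$ is below or above $q$, so that the genericity condition in Proposition~\ref{Prop:RankSequenceQuotient} becomes an automatic inequality on one side and a non-trivial equality condition on the other.
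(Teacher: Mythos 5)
Your argument is correct, and the first two thirds of it coincide in substance with the paper's: the paper also obtains \eqref{Eq:RankGenericDegeneration} directly from $r^{L\oplus Q}=r^L+r^Q$ and Proposition~\ref{Prop:RankSequenceQuotient} (your explicit case split on $k\geq q$ versus $k<q$, with the image containments $\im f^M_{q,\ell}\subseteq\im f^M_{k,\ell}$ reversing direction, is exactly the verification the paper leaves implicit), and your derivation of $q>1$ from Lemma~\ref{Lem:Embeddings}(i),(iii) is a harmless variant of the paper's observation that $U_{1,s}=I_s$ is injective, so a non-split embedding forces $q>1$. Where you genuinely diverge is the final claim $M\leq_c L\oplus Q$: you invoke the Abeasis--Del Fra equivalence of the rank and combinatorial orders as a black box, whereas the paper proves this claim constructively, introducing the defect function $f(k,\ell)=r^M_{q,\ell}-r^M_{q,s+1}-r^M_{k,\ell}+r^M_{k,s+1}$, locating its vanishing locus via the extremal indices $t_1,t_2,q_1,q_2$, and exhibiting the at most two explicit cuts/shifts that realize $r^{L\oplus Q}$ through Lemma~\ref{Lem:RankCutShift}. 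Your route is shorter and mathematically valid for the statement as written, but it buys less: the explicit identification of which cuts and shifts occur is precisely what the paper reuses when it pairs them into \emph{symmetric} cuts and shifts for Proposition~\ref{Prop:RankSequenceOrthogonal} and for the algorithm of Section~\ref{Sec_Algorithm}, and it is also what keeps the equivalence of orders self-contained rather than outsourced to \cite{AbeasisDelFraEquioriented}; a reader following your version would have to redo the constructive analysis anyway at the symmetric stage, where the Abeasis--Del Fra theorem does not directly apply.
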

\begin{proof}
Since $r^{L\oplus Q}=r^L+r^Q$, formula \eqref{Eq:RankGenericDegeneration} follows directly from \eqref{Eq:RankGenericQuotient}. Let us prove that $M\leq_c L\oplus Q$. If $L$ is a direct summand of $M$ then $M\simeq L\oplus Q$ and we are done. Let us assume that $L$ is not a direct summand of $M$. Then, $q>1$, because otherwise $L$ would be injective and hence $\iota$ would be a split mono-morphism. Let us consider the function 
$$
f(k,\ell)=r_{q,\ell}^M-r_{q,s+1}^M- r_{k,\ell}^M+r_{k,s+1}^M.
$$
By Lemma~\ref{Lem:Embeddings}, $f(q-1,s)=0$ because $L$ is not a direct summand of $M$. Moreover, again by Lemma~\ref{Lem:Embeddings}, the following formulas holds:
\begin{eqnarray}\label{eq:f(k,ell)1}
&&f(k,\ell)=f(k,\ell+1)+(r^M_{q,\ell}-r^M_{q,\ell+1}-r^M_{k,\ell}+r^M_{k,\ell+1})\geq f(k,\ell+1)\geq0;\\\label{eq:f(k,ell)2}
&&f(k,\ell)=f(k+1,\ell)+(r^M_{k+1,\ell}-r^M_{k+1,s+1}-r^M_{k,\ell}+r^M_{k,s+1})\geq f(k+1,\ell)\geq0.
\end{eqnarray}
It follows that if $f(k,\ell)=0$ then $f(k,\ell+1)=f(k+1,\ell)$. Let $q_1\in [q,s]$ be the minimal index such that there exists $k\in[1,q-1]$ such that $f(k,q_1)=0$ and let $t_1$ be the minimal index such that $f(t_1,q_1)=0$.  Similarly, let $t_2\in [1,q-1]$ be the minimal index such that there exists $\ell\in[q,s]$ such that $f(k,\ell)=0$ and let $q_2$ be the minimal index such that $f(t_2,q_2)=0$. We notice that by \eqref{eq:f(k,ell)1} and \eqref{eq:f(k,ell)2}, $q\leq q_1\leq q_2\leq s$ and $t_2\leq t_1<q$. We notice that in view of Lemma~\ref{Lem:Embeddings}, $U_{t_1,q_2-1}$ and $U_{t_2,s}$ are direct summands of $M$.
We can rewrite \eqref{Eq:RankGenericDegeneration} as
\begin{equation}\label{Eq:RankGenericDegenerationCutsShift}
r_{k,\ell}^{L\oplus Q}=\left\{
\begin{array}{ll}
r_{k,\ell}^M-1&\text{if }t_1\leq k<q\leq q_1\leq \ell< q_2,\\
r_{k,\ell}^M-1&\text{if }t_2\leq <q\leq q_2\leq \ell\leq s,\\
r_{k,\ell}^M&\text{otherwise}.
\end{array}
\right.
\end{equation}
Now it is easy to conclude from Lemma~\ref{Lem:RankCutShift}: if $q_1=q_2$ then $t_1=t_2$; in this case $L\oplus Q$ is obtained from $M$ by a cut (at column $q-1$ of the segment $[t_1,s]$) if $q=q_1=q_2$, and by a shift (of the segment $[q_1-1,s]$ from $[t_1,s]$  to $[q,q_1]$) if $q<q_1=q_2$. If $q_1<q_2$ then $L\oplus Q$ is obtained from $M$ by performing first a cut (at column $q-1$ of the segment $[t_1,q_2-1]$) and then a shift (of the segment $[q,q_2-1]$ from $[t_2,s]$ to $[q,q_2-1]$) if $q=q_1$, and by performing a shift (of the segment $[q_1-1,q_2-1]$ from $[t_1,q_2-1]$ to $[q,q_1-1]$) and then another shift (of the segment $[q_2-1,s]$ from $[t_2,s]$ to $[q,q_2-1]$) if $q<q_1$. 
\end{proof}

\subsection{\texorpdfstring{$\ee$}{epsilon}-representations}\label{Sec:EE-rep}

Let $\ee$ be either $1$ or $-1$. For an $\ee$-form on a $\mathbb{C}$-vector space $W$ we mean a bilinear form $\langle-,-\rangle:W\times W\to\mathbb{C}$ satisfying $\langle v,w\rangle=\ve\langle w,v\rangle$ for every $v,w\in W$. Recall from \cite{DW, BCI} that a $\sigma$-compatible $\ee$-form on $M^0$ is a non-degenerate $\ee$-form such that the restriction of the form on $M_i\times M_j$ is zero unless $j=\sigma(i)$. We notice that if $M^0$ admits a $\sigma$-compatible $\ee$-form, then $M_i$ and $M_{\sigma(i)}$ are dual to each other, in particular they have the same dimension. Moreover, if $n=2h-1$ is odd then $M_{h}$ inherits a non-degenerate $\ee$-form, thus it is even dimensional if $\ee=-1$.
Let us suppose that $M^0$ is endowed with a $\sigma$-compatible $\ee$-form $\form$.  We consider the involution ${}^\star:R(M^0)\rightarrow R(M^0)$ which associates to $f=(f_i:M_i\rightarrow M_{i+1})$ its anti-adjoint $f^\star=(f^\star_i)\in R(M^0)$ given by the formula
$$
\langle f_i(v),w\rangle+\langle v, f^\star_{\sigma(i)}(w)\rangle=0
$$ 
for every $i=1,\cdots, n-1$, $v\in M_i$ and $w\in M_{\sigma(i+1)}$. We notice that $(M^0,f^\star)\simeq \nabla(M^0,f)$.
The variety of ${}^\star$-fixed points is denoted 
by
$$
R(M^0;\form)=\{f\in R(M^0)|\, f=f^\star\};
$$
and point in $R(M^0;\form)$ are called \emph{$\ee$-representations} of $(\stackrel{\rightarrow}{A_n},\sigma)$ with underlying space $(M^0,\form)$. Sometimes, $1$-representations are called orthogonal and $(-1)$-representations are called symplectic. We use the notation $(\stackrel{\rightarrow}{A}_\bullet,\ee)$ to distinguish the four cases $(\stackrel{\rightarrow}{A}_{odd},\pm1)$ and $(\stackrel{\rightarrow}{A}_{even},\pm1)$.
The group of isometries
$$
\GL^\bullet(M^0,\form)=\{g\in \GL^\bullet(M^0)|\, \langle g v, g w\rangle=\langle v,w\rangle,\, \forall v,w\in M^0\}
$$
of $(M^0,\form)$ acts on $R(M^0;\form)$ by change of basis  and by \cite[Theorem~2.6]{DW} (see also \cite[Theorem~2.5]{BCI}), we have that, for every $M\in R(M^0;\form)$,
\begin{equation}\label{Eq:MWZTheorem}
\GL^\bullet(M^0,\form)\cdot M=\GL^\bullet(M^0)\cdot M\cap R(M^0;\form).
\end{equation}
Given $M,N\in R(M^0;\form)$ we write $M\degg^\ee N$ if $N\in \overline{\GL^\bullet(M^0,\form)\cdot M}$. 

Every $\ee$-representation can be written in an essentially unique way as a direct sum of indecomposable $\ee$-representations.  By \cite[Proposition~2.7]{DW}, the indecomposable $\ee$-representations of $(\stackrel{\rightarrow}{A_n},\sigma)$ are 
$$\{U_{i,j}\oplus U_{\sigma(j),\sigma(i)}|\, 1\leq i\leq j\leq n, i\leq \sigma(j)\}$$ 
in types $(\stackrel{\rightarrow}{A}_{\textrm{odd}},-1)$ and  $(\stackrel{\rightarrow}{A}_{\textrm{even}},+1)$, and 
$$\{U_{i,j}\oplus U_{\sigma(j),\sigma(i)}|\, 1\leq i\leq j\leq n, i< \sigma(j)\}\cup \{U_{i,\sigma(i)}|\, 1\leq i\leq \sigma(i)\leq n\}$$
in types $(\stackrel{\rightarrow}{A}_{\textrm{odd}},+1)$ and  $(\stackrel{\rightarrow}{A}_{\textrm{even}},-1)$. For this reason, the types $(\stackrel{\rightarrow}{A}_{\textrm{odd}},-1)$ and  $(\stackrel{\rightarrow}{A}_{\textrm{even}},+1)$ are called the \emph{split types} in \cite{BCI}.

\begin{lem}\label{Lem:AdjointEqui}
Let $M=(M^0,f)\in R(M^0;\form)$ be an $\ee$-representation of $\stackrel{\rightarrow}{A_n}$. Then, for every $1\leq i<j\leq n$, 
\begin{equation}\label{Eq:AdjointFij}
f_{i,j}^\star=(-1)^{j-i}f_{\sigma(j),\sigma(i)}.
\end{equation}
In particular, in the split types $\langle f_{i,\sigma(i)}(v), v\rangle=0$ for all $v\in M_i$.
\end{lem}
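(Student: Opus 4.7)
The plan is to establish (2.6) by induction on $j-i$, and then deduce the ``in particular'' claim via a short sign manipulation. The base case $j=i+1$ unpacks the $\ee$-representation hypothesis directly: combining the defining formula of $f^\star$ with $f=f^\star$ gives, for $v\in M_i$ and $w\in M_{\sigma(i+1)}$, the identity $\langle f_i(v),w\rangle+\langle v,f_{\sigma(i+1)}(w)\rangle=0$. Reading this as a statement about the linear adjoint $f_{i,i+1}^\star=f_i^\star$ of the single arrow $f_i$ yields $f_i^\star=-f_{\sigma(i+1)}=-f_{\sigma(i+1),\sigma(i)}$, matching (2.6) with $j-i=1$.

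For the inductive step, I factor $f_{i,j}=f_{j-1}\circ f_{i,j-1}$ and use contravariance of the adjoint, $f_{i,j}^\star=f_{i,j-1}^\star\circ f_{j-1}^\star$. Substituting the inductive hypothesis for $f_{i,j-1}^\star$ and the base case for $f_{j-1}^\star$,
$$f_{i,j}^\star=(-1)^{j-1-i}f_{\sigma(j-1),\sigma(i)}\circ (-1)\,f_{\sigma(j),\sigma(j-1)}=(-1)^{j-i}f_{\sigma(j),\sigma(i)},$$
since the composition of consecutive arrows $M_{\sigma(j)}\to M_{\sigma(j-1)}\to M_{\sigma(i)}$ in $M$ is precisely $f_{\sigma(j),\sigma(i)}$.

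For the ``in particular'' statement, assume $i<\sigma(i)$ and specialize (2.6) to $j=\sigma(i)$, obtaining $f_{i,\sigma(i)}^\star=(-1)^{\sigma(i)-i}f_{i,\sigma(i)}$. For any $v\in M_i$, the adjoint property together with the $\ee$-symmetry of the form give the chain
$$\langle f_{i,\sigma(i)}(v),v\rangle=\langle v,f_{i,\sigma(i)}^\star(v)\rangle=(-1)^{\sigma(i)-i}\langle v,f_{i,\sigma(i)}(v)\rangle=(-1)^{\sigma(i)-i}\ee\,\langle f_{i,\sigma(i)}(v),v\rangle.$$
A parity check shows $(-1)^{\sigma(i)-i}\ee=(-1)^{n+1}\ee=-1$ precisely in the two split types (either $n$ odd with $\ee=-1$, or $n$ even with $\ee=+1$), whence the pairing equals its own negative and must vanish.

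The main obstacle is nothing conceptual, only careful sign bookkeeping. The only mildly delicate point is pinning down that $f_{i,j}^\star$ should be read as the linear adjoint rather than the ``anti-adjoint'' $-f_{i,j}^\sharp$; once this interpretation is fixed by the base case, the induction and the closing parity check are entirely mechanical.
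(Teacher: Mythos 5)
Your proof is correct and follows essentially the same route as the paper: induction on $j-i$ starting from the single-arrow case, followed by the parity computation $(-1)^{\sigma(i)-i}\ee=(-1)^{n+1}\ee=-1$ in the split types; your use of contravariance $(h\circ g)^\ast=g^\ast\circ h^\ast$ is just a cleaner packaging of the paper's explicit chain of pairings, and your fixing of the linear-adjoint reading of ${}^\star$ is the right one. The only cosmetic omission is the degenerate case $i=\sigma(i)$ (possible when $n$ is odd), where the claim is immediate because a symplectic form is alternating.
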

\begin{proof}
We proceed by induction on $j-i$. If $j-i=1$ then $f_{i,j}=f_i$ and 
$$f_{i,j}^\star=f_i^\star=-f_{\sigma(i+1)}=-f_{\sigma(j)}=-f_{\sigma(j),\sigma(i)}.$$ 
Suppose $j-i>1$ then for every $v\in M_i$ and $w\in M_{\sigma(j)}$ we have 
\begin{multline*}
\langle f_{i,j}(v),w\rangle=\langle f_{j-1}\circ f_{i, j-1}(v),w\rangle=-\langle f_{i,j-1}(v), f_{\sigma(j)}(w)\rangle=\\
=(-1)^{j-1-i}\langle v, f_{\sigma(j-1), \sigma(i)}\circ f_{\sigma(j)}(w)\rangle=-\langle v, (-1)^{j-i}f_{\sigma(j),\sigma(i)}(w)\rangle.
\end{multline*}

From \eqref{Eq:AdjointFij} we get that 
$$\langle f_{i,\sigma(i)}(v),v\rangle=(-1)^{\sigma(i)-i}\ee\langle f_{i,\sigma(i)}(v),v\rangle.$$ 
The number  $(\sigma(i)-i)=n+1-2i$ is even if $n$ is odd,  and  it is odd if $n$ is even. Therefore, in types $(A_\mathrm{odd}, -1)$ or $(A_\mathrm{even}, 1)$ we have $\langle f_{i,\sigma(i)}(v), v\rangle=-\langle f_{i,\sigma(i)}(v),v\rangle$ concluding the proof.
\end{proof}
The following consequence of Lemma~\ref{Lem:AdjointEqui} will be used later. To state it, we recall that for a given representation $M=(M^0,f)$, a sub-representation $N$ of $M$ is a graded vector sub-space 
$$N^0=\bigoplus N_i\subseteq \bigoplus M_i=M^0$$
such that $f_i(N_i)\subset N_{i+1}$, for every $1\leq i<n$. For $M\in R(M^0;\form)$, a sub-representation $N\subseteq M$ is called \emph{isotropic} if $N^0$ is an isotropic subspace of $M^0$ with respect to the non-degenerate form $\form$ (i.e. $N^0\subseteq (N^0)^\perp$).
\begin{cor}\label{Cor:SplitIndIsotropic}
In the split types every indecomposable sub-representation of an $\ee$-representation is isotropic. 
\end{cor}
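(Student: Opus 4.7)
The plan is to use the explicit description of an indecomposable sub-representation $N\cong U_{i,j}$ of $M$ and reduce the computation of the bilinear form on $N^0$ to a single scalar that vanishes by the last statement of Lemma~\ref{Lem:AdjointEqui}.

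First, I would fix an embedding $\iota: U_{i,j}\hookrightarrow M$ and write $N^0$ as the span of $v_i, v_{i+1}, \ldots, v_j$, where $v_i\in M_i$ is non-zero and $v_k = f_{i,k}(v_i)$ for $i\leq k\leq j$. To prove isotropy, it suffices to check that $\langle v_k, v_\ell\rangle = 0$ for every pair of indices $k, \ell\in [i,j]$. By $\sigma$-compatibility of the form, such a pairing vanishes automatically unless $\ell = \sigma(k)$. In the exceptional case $k=\sigma(k)$, which can only occur at the central vertex in type $(\stackrel{\rightarrow}{A}_{\mathrm{odd}}, -1)$, the skew-symmetry of the form ($\ee = -1$) immediately gives $\langle v_k, v_k\rangle = 0$. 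So the real content lies in the case $i\leq k<\sigma(k)\leq j$, which, since $i\leq k$ and $\sigma(k)\leq \sigma(i)$, incidentally forces $i\leq\sigma(i)$.

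The key step is then to rewrite
$$
\langle v_k, v_{\sigma(k)}\rangle = \langle f_{i,k}(v_i),\, f_{i,\sigma(k)}(v_i)\rangle
$$
by moving $f_{i,k}$ across the form using the adjunction formula $f_{i,k}^\star = (-1)^{k-i}f_{\sigma(k),\sigma(i)}$ from \eqref{Eq:AdjointFij}; the resulting composition $f_{\sigma(k),\sigma(i)}\circ f_{i,\sigma(k)}$ collapses to $f_{i,\sigma(i)}$, yielding $\langle v_k, v_{\sigma(k)}\rangle = (-1)^{k-i+1}\langle v_i,\, f_{i,\sigma(i)}(v_i)\rangle$. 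This last scalar vanishes: the second part of Lemma~\ref{Lem:AdjointEqui} gives $\langle f_{i,\sigma(i)}(v_i), v_i\rangle = 0$ in the split types, and the $\ee$-symmetry of the form transfers this to $\langle v_i,\, f_{i,\sigma(i)}(v_i)\rangle = \ee\langle f_{i,\sigma(i)}(v_i), v_i\rangle = 0$.

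The main thing to watch is the index bookkeeping: one must check that $k,\sigma(k)\in[i,j]$ forces $i\leq \sigma(k)\leq\sigma(i)$, so that the three maps $f_{i,\sigma(k)}$, $f_{\sigma(k),\sigma(i)}$, and $f_{i,\sigma(i)}$ are all genuinely defined and their composition identity holds. This is precisely where indecomposability of $N$ enters the argument: $N^0$ sits inside a single contiguous segment $[i,j]$ of vertices, permitting the reduction of every potentially nontrivial pairing to the one scalar $\langle v_i, f_{i,\sigma(i)}(v_i)\rangle$ killed by Lemma~\ref{Lem:AdjointEqui}.
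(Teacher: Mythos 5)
Your proof is correct and follows essentially the same route as the paper: reduce to pairings $\langle v_k,v_{\sigma(k)}\rangle$ via $\sigma$-compatibility and kill them with the final statement of Lemma~\ref{Lem:AdjointEqui}. The only (cosmetic) difference is that the paper applies that statement directly at the vertex $k$, observing that $v_{\sigma(k)}$ is, up to scalar, $f_{k,\sigma(k)}(v_k)$, which makes your detour through the adjunction formula \eqref{Eq:AdjointFij} unnecessary.
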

\begin{proof}
Let $M=((M_i)_{i=1}^n,(f_i:M_i\rightarrow M_{i+1})_{i=1}^{n-1})$ be an $\ee$-representation and let $U_{k,\ell}\subset M$ be an indecomposable sub-representation. Let $v,w\in U_{k,\ell}$, say $v\in M_i$ and $w\in M_j$ for $i<j$. Then, up to a scalar, $w=f_{i,j}(v)$. If $j\neq \sigma(i)$ then $\langle v,w\rangle=0$ by definition of the $\ee$-form; if $j=\sigma(i)$ then $\langle v,w\rangle=0$ by Lemma~\ref{Lem:AdjointEqui}.
\end{proof}

We now give a handy criterion to recognize if an $\stackrel{\rightarrow}{A_n}$-representation is an $\ee$-representation in terms of its rank sequence.

\begin{thm}\label{Thm:MainEquioriented}
Let $M$ be an $\stackrel{\rightarrow}{A_n}$-representation. Then $M$ admits the structure of an $\ee$-representation if and only if its rank sequence $r^M=(r_{i,j}^M)$ satisfies the following relations: 
\begin{eqnarray}\label{Eq:RankErepEquioriented1}
r_{i,j}^M=r_{\sigma(j),\sigma(i)}^M;
\end{eqnarray}
\begin{multline}\label{Eq:RankErepEquioriented2}
\text{in type }(\stackrel{\rightarrow}{A}_{\mathrm{odd}},-1)\text{ and in type }(\stackrel{\rightarrow}{A}_{\mathrm{even}},+1), r_{i,\sigma(i)}^M\text{ is even for any $i\leq \sigma(i)$}.
\end{multline}
\end{thm}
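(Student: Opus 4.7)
The plan is to prove both directions using the classification of indecomposable $\ee$-representations recalled just before the theorem.

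For the forward direction, suppose $M$ carries a $\sigma$-compatible $\ee$-form $\form$. The map $\phi\colon M\to \nabla M$ given vertex-wise by $\phi_i(v)=\langle v,-\rangle|_{M_{\sigma(i)}}$ is an isomorphism of $\stackrel{\rightarrow}{A_n}$-representations: non-degeneracy of $\form$ gives bijectivity on each graded piece, and the defining identity of $f^\star$ together with the condition $f=f^\star$ (plus Lemma~\ref{Lem:AdjointEqui}) yields exactly the compatibility with the arrows $-f_{\sigma(i+1)}^\ast$ of $\nabla M$ as in \eqref{Eq:NablaMEqui}. Thus $r^M=r^{\nabla M}$ by Lemma~\ref{Lem:RankMultiplicity}, which via \eqref{Eq:RankNable} is exactly \eqref{Eq:RankErepEquioriented1}. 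To obtain \eqref{Eq:RankErepEquioriented2} in the split types, I decompose $M$ into indecomposable $\ee$-representations from the list recalled in Section~\ref{Sec:EE-rep}: the indecomposable containing the summand $U_{i,\sigma(i)}$ is $U_{i,\sigma(i)}\oplus U_{i,\sigma(i)}$, so the multiplicity $m_{i,\sigma(i)}$ of $M$ along the $\sigma$-fixed diagonal is even. Together with the symmetry $m_{k,\ell}=m_{\sigma(\ell),\sigma(k)}$ obtained from \eqref{Eq:RankErepEquioriented1} via \eqref{Eq:MultiplicityRank}, pairing $(k,\ell)\leftrightarrow(\sigma(\ell),\sigma(k))$ inside the sum
\[
r_{i,\sigma(i)}^M=\sum_{k\leq i,\ \ell\geq \sigma(i)} m_{k,\ell}
\]
yields an even contribution from each non-fixed pair (each appearing twice) and an individually even contribution from each fixed pair $(k,\sigma(k))$, giving the claimed parity.

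For the converse, assume $r^M$ satisfies the two relations. By \eqref{Eq:RankNable}, condition \eqref{Eq:RankErepEquioriented1} says $r^M=r^{\nabla M}$, so Lemma~\ref{Lem:RankMultiplicity} gives $M\simeq \nabla M$ and hence $m_{i,j}=m_{\sigma(j),\sigma(i)}$. In the split types, the same pairing used above rewrites the parity hypothesis on $r^M_{i,\sigma(i)}$ as the parity of $\sum_{k=1}^{i} m_{k,\sigma(k)}$ (the non-fixed pairs contribute an even number automatically); applying this for $i=1,2,\ldots,\lfloor (n+1)/2\rfloor$ and telescoping shows inductively that each $m_{k,\sigma(k)}$ is individually even. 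These are precisely the multiplicity constraints needed to regroup the canonical decomposition $M=\bigoplus U_{i,j}^{m_{i,j}}$ as a direct sum of indecomposable $\ee$-representations from the list in Section~\ref{Sec:EE-rep}. Equipping each such summand with its natural $\sigma$-compatible $\ee$-form and taking the orthogonal direct sum produces the desired structure on $M^0$.

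The main obstacle I expect is the bookkeeping of the involution $(k,\ell)\leftrightarrow(\sigma(\ell),\sigma(k))$ on the index set of the sum defining $r^M_{i,\sigma(i)}$, and in particular justifying that the family of parity conditions on the $r^M_{i,\sigma(i)}$ extracts evenness of the \emph{individual} diagonal multiplicities $m_{k,\sigma(k)}$. Once this is in place, the remaining assembly of the $\ee$-form reduces cleanly to the classification of indecomposable $\ee$-representations stated immediately before the theorem.
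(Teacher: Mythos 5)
Your proposal is correct and follows essentially the same route as the paper, whose proof is a one-line reduction to the classification of indecomposable $\ee$-representations together with the multiplicity--rank conversion \eqref{Eq:MultiplicityRank}; your pairing $(k,\ell)\leftrightarrow(\sigma(\ell),\sigma(k))$ and the telescoping of the parities of $\sum_{k\le i}m_{k,\sigma(k)}$ are precisely the details that argument leaves implicit. The only cosmetic difference is that you derive \eqref{Eq:RankErepEquioriented1} in the forward direction via the isomorphism $M\simeq\nabla M$ rather than directly from the symmetry of the multiplicities of the indecomposable $\ee$-summands, which is equivalent.
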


\begin{proof}
We write $M=\bigoplus U_{i,j}^{m_{i,j}}$ as a direct sum of indecomposable representations. Thus, $M$ is an $\ee$-representation if and only if it is a direct sum of indecomposable $\ee$-representations. The result follows from \eqref{Eq:MultiplicityRank} and the description of the indecomposable $\ee$-representations.
\end{proof}

\subsection{(Symmetric) degenerations}
In this section, we describe the degeneration order $\degg^\ee$ in the split types $(\stackrel{\rightarrow}{A}_{\mathrm{odd}},-1)$ and $(\stackrel{\rightarrow}{A}_{\mathrm{even}},+1)$. 

Let $M$ be an $\ee$-representation of $(\stackrel{\rightarrow}{A}_{n},\sigma)$. Since we are in a split type, $M$ is the union of segments $[i,j]\cup [\sigma(j),\sigma(i)]\subset M$. We call $[i,j]\cup [\sigma(j),\sigma(i)]$ a \emph{symmetric segment} of $M$.
We notice that the segments $[i,\sigma(i)]$ appear with even multiplicity. 
\begin{definition}\label{Def:SymmCutsShifts}
Let $M$ be an $\ee$-representation. 
\begin{enumerate}
\item A \emph{symmetric cut} at a column $q$ of a symmetric segment $[t,s]\cup [\sigma(s), \sigma(t)]\subset M$ is the cut at column $q$ of $[t,s]$ followed by a cut at $\sigma(q+1)$ of $[\sigma(s),\sigma(t)]$, for every $1\leq t\leq q< s\leq n$ (see Table~\ref{Table:SymmetricCut}). These two cuts are called dual to each other.
\item A \emph{symmetric shift} of the segment  $[r,s]$ from the symmetric segment $[t,s]\cup [\sigma(s),\sigma(t)]\subset M$ to the symmetric segment $[q,r]\cup [\sigma(r),\sigma(q)]\subset M$ is the shift of $[r,s]$ from $[t,s]$ to $[q,r]$ together with the shift of $[\sigma(q),\sigma(t)]$ from  $[\sigma(s),\sigma(t)]\subset M$ to  $[\sigma(r),\sigma(q)]\subset M$ (see Table~\ref{Table:SymmetricShift}). These two shifts are called dual to each other.
\end{enumerate}
Given $M,N\in R(M^0;\form)$ we write $M\leq_c^\ee N$ if $N$ is obtained from $M$ by a finite sequence of symmetric cuts or symmetric shifts.
\end{definition}
\begin{table}[htbp]
\begin{tabular}{|c|c|}
\hline
$
\xymatrix@R=0pt@C=4pt{
t&q&&&s&&&\\
*{\bullet}\ar@{-}[rrrr]&*{\bullet}\ar[r]&*{\bullet}&&*{\bullet}&&&\\
&&&*{\bullet}\ar@{-}[rrrr]&&*{\bullet}\ar[r]&*{\bullet}&*{\bullet}\\
&&&\sigma(s)&&&\sigma(q)&\sigma(t)
}
$
&
$
\xymatrix@R=0pt@C=1pt{
t&q&q+1&&s&&&\\
*{\bullet}\ar@{-}[r]&*{\bullet}&*{\bullet}\ar@{-}[rr]&&*{\bullet}&&&\\
&&&*{\bullet}\ar@{-}[rr]&&*{\bullet}&*{\bullet}\ar@{-}[r]&*{\bullet}\\
&&&\sigma(s)&&\sigma(q+1)&\sigma(q)&\sigma(t)
}
$
\\\hline
$M$&$N$\\\hline
\end{tabular}
\caption{A symmetric cut from $M$ to $N$ at columns $q$ of the symmetric segment $[t,s]\cup [\sigma(s),\sigma(t)]$}
\label{Table:SymmetricCut}
\end{table}
\begin{table}[htbp]
\begin{tabular}{|c|c|}
\hline
$
\xymatrix@R=0pt@C=4pt{
t&q&r&&s&&&\\
*{\bullet}\ar@{-}[rrrr]&*{\bullet}&*{\bullet}&&*{\bullet}&&&\\
&*{\bullet}\ar@{-}[r]&*{\bullet}&&&&&\\
&&&*{\bullet}\ar@{-}[rrrr]&&*{\bullet}&*{\bullet}&*{\bullet}\\
&&&&&*{\bullet}\ar@{-}[r]&*{\bullet}&\\
&&&\sigma(s)&&\sigma(r)&\sigma(q)&\sigma(t)
}
$
&
$
\xymatrix@R=0pt@C=4pt{
t&q&r&&s&&&\\
*{\bullet}\ar@{-}[rr]&*{\bullet}&*{\bullet}&&&&&\\
&*{\bullet}\ar@{-}[rrr]&*{\bullet}&&*{\bullet}&&&\\
&&&*{\bullet}\ar@{-}[rrr]&&*{\bullet}&*{\bullet}&\\
&&&&&*{\bullet}\ar@{-}[rr]&*{\bullet}&*{\bullet}\\
&&&\sigma(s)&&\sigma(r)&\sigma(q)&\sigma(t)
}
$
\\\hline
$M$&$N$\\\hline
\end{tabular}
\caption{A symmetric shift from $M$ to $N$ of the segment $[r,s]\cup [\sigma(q),\sigma(t)]$ from $[t,s]\cup [\sigma(s),\sigma(t)]\subset M$ to $[q,r]\cup [\sigma(r),\sigma(q)]\subset M$}
\label{Table:SymmetricShift}
\end{table}

In this section we prove the following result:
\begin{thm}\label{Thm:ee-degenerationRanks}
In the split types, $M\degg^\ee N$ if and only if $M\leq_c^\ee N$  if and only if $r^M\geq r^N$.
\end{thm}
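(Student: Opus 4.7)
The plan is to prove the three equivalences in a cycle. The implication $M\leq_c^\ee N \Rightarrow r^M\geq r^N$ is immediate from Lemma~\ref{Lem:RankCutShift}, since by definition each symmetric cut or shift decomposes into a pair of ordinary cuts or shifts applied to dual segments, each of which weakly decreases the rank sequence in the sense of that lemma. For the implication $M\leq_c^\ee N \Rightarrow M\degg^\ee N$, one constructs, for each symmetric cut or shift, an explicit one-parameter subgroup of the isometry group $H:=\GL^\bullet(M^0,\form)$ whose limit at zero realizes it: the classical rescaling one-parameter subgroup in $\GL^\bullet(M^0)$ that performs the cut or shift on a segment $[t,s]$ can be paired with its anti-adjoint, which performs the dual operation on $[\sigma(s),\sigma(t)]$, and the product lies in $H$ because the pair is preserved by ${}^\star$; the details of this construction are deferred to Section~\ref{Sec_Algorithm}. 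Finally, the implication $M\degg^\ee N \Rightarrow r^M\geq r^N$ follows from the inclusion $H\subseteq \GL^\bullet(M^0)$, which gives $\overline{H\cdot M}\subseteq \overline{\GL^\bullet(M^0)\cdot M}$, reducing us to the classical Abeasis--Del Fra theorem in the non-symmetric case.

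The nontrivial direction is $r^M\geq r^N \Rightarrow M\leq_c^\ee N$, which I would prove by induction on $d(M,N):=\sum_{i\leq j}(r_{i,j}^M-r_{i,j}^N)$. The base case $d(M,N)=0$ gives $M\simeq N$ as $\stackrel{\rightarrow}{A_n}$-representations by Lemma~\ref{Lem:RankMultiplicity}, and combining with \eqref{Eq:MWZTheorem} shows that $M$ and $N$ lie in the same $H$-orbit, so the statement is trivial. For the inductive step the goal is to exhibit a single symmetric cut or shift $M\to M'$ with $r^{M'}\geq r^N$; since such an operation strictly decreases at least one entry of the rank sequence, one has $d(M',N)<d(M,N)$ and induction applies.

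To produce this operation we mimic the strategy of the non-symmetric case. By Proposition~\ref{Prop:RankSequenceQuotient} and its corollary, one can choose an indecomposable $L=U_{q,s}$ embedded in $M$ via a generic embedding whose generic quotient $Q$ satisfies $M\leq_c L\oplus Q$ and $r^{L\oplus Q}\geq r^N$, i.e.\ $L\oplus Q$ is obtained from $M$ by a single ordinary cut or shift. Because both $r^M$ and $r^N$ are $\sigma$-symmetric by Theorem~\ref{Thm:MainEquioriented}, the rank-decrease pattern from $r^M$ to $r^{L\oplus Q}$ has a $\sigma$-dual copy in the reflected region; this copy corresponds to the embedding of $\nabla L=U_{\sigma(s),\sigma(q)}$ into $M$ and produces a dual cut or shift. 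When the segment $[t_1,s]$ and its dual $[\sigma(s),\sigma(t_1)]$ (with $t_1$ as in the corollary) are disjoint, the two operations commute and their composition is precisely a symmetric cut or shift in the sense of Definition~\ref{Def:SymmCutsShifts}, and the $\sigma$-symmetry of $r^N$ guarantees $r^{M'}\geq r^N$.

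The principal obstacle is the overlapping case, most notably when the chosen segment is self-dual (so $t_1+s=n+1$) or when $[t_1,s]\cap[\sigma(s),\sigma(t_1)]\neq\emptyset$. Here one exploits the fact that in the split types the self-dual indecomposables $U_{i,\sigma(i)}$ appear in $M$ with even multiplicity (this is exactly condition~\eqref{Eq:RankErepEquioriented2}), so the cut or shift can be performed on one copy of the segment and its $\sigma$-dual on a separate copy, yielding a bona fide symmetric cut or shift. A careful case analysis that distinguishes self-dual segments from properly overlapping ones, coupled with a judicious choice of the generic embedding $L\hookrightarrow M$ (for instance taking $q$ minimal, as in Abeasis--Del Fra), handles all overlapping configurations and completes the induction; this case analysis is the main technical content of the proof.
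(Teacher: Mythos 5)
Your treatment of the easy implications is sound and close in spirit to the paper's: the paper gets $M\leq_c^\ee N\Rightarrow M\degg^\ee N$ from Corollary~\ref{Cor:SplitIndIsotropic} together with the degeneration $M\degg^\ee U\oplus\nabla U\oplus U^\perp/U$ for isotropic $U$ (\cite[Theorem~3.1]{BCI}), which is the intrinsic version of your ``pair the rescaling with its anti-adjoint'' construction, and $M\degg^\ee N\Rightarrow r^M\geq r^N$ is upper-semicontinuity, equivalent to your reduction via $H\subseteq\GL^\bullet(M^0)$.

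The hard direction is where your proposal has a genuine gap. Your inductive scheme requires, at each step, a \emph{single} symmetric cut or shift $M\to M'$ with $r^{M'}\geq r^N$, obtained by symmetrizing an ordinary cut or shift $M\to L\oplus Q$ with $r^{L\oplus Q}\geq r^N$. Even granting the existence of such an $L=U_{q,s}$, the symmetrized move decreases both the region $R$ of entries affected by the operation on $[t_1,s]$ and its reflection $\sigma(R)$; on $R\cap\sigma(R)$ the entry $r_{k,\ell}$ drops by $2$, and the inequality $r^{L\oplus Q}_{k,\ell}\geq r^N_{k,\ell}$ only guarantees one unit of slack. So whenever $R\cap\sigma(R)\neq\emptyset$ and $r^M_{k,\ell}=r^N_{k,\ell}+1$ there, your $M'$ overshoots and the induction breaks. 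This is not a peripheral case: it is exactly the configuration that makes the symmetric statement harder than Abeasis--Del Fra, and your appeal to even multiplicity of the self-dual $U_{i,\sigma(i)}$ (which controls only the anti-diagonal entries $r_{i,\sigma(i)}$, not the off-anti-diagonal entries in $R\cap\sigma(R)$) and to ``a careful case analysis'' does not resolve it. Proving that a non-overshooting symmetric move always exists is essentially the whole theorem, and it is left unproved.

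The paper sidesteps this entirely with a different induction (Lemma~\ref{Lem:CEquivalentR}): it chooses $L=P_i=U_{i,n}$ to be a \emph{projective} direct summand of $N$ (the maximal $i$ with $r^N_{i,n}>r^N_{i-1,n}$), embeds it generically and isotropically into $M$, and replaces the pair $(M,N)$ by $(\iota(L)^\perp/\iota(L),\,N(1))$ where $N=L\oplus\nabla L\oplus N(1)$. Proposition~\ref{Prop:RankSequenceOrthogonal} gives the rank sequence of the symplectic reduction $M(1)=\iota(L)^\perp/\iota(L)$ and the relation $M\leq_c^\ee L\oplus\nabla L\oplus M(1)$ (possibly several symmetric cuts and shifts, not one), and a short computation shows $r^{M(1)}\geq r^{N(1)}$. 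Peeling off a projective summand of $N$ rather than hunting for a single rank-minimal symmetric move is what makes the overlap problem disappear; if you want to salvage your approach you would need to prove the existence of a non-overshooting symmetric cut or shift directly, which I expect to be at least as hard as the paper's route.
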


\subsection{Proof of Theorem~\ref{Thm:ee-degenerationRanks}}
%
The proof of Theorem~\ref{Thm:ee-degenerationRanks} is illustrated as follows:
$$
\xymatrix@C=50pt{
M\degg^\ee N\ar@/^8pt/^{\text{upper-semicontinuity}}@{=>}[rr]&M\leq_c^\ee N\ar@{=>}^{\text{Lemma}~\ref{Lem:CimpliesDeg}}[l]
&r^M\geq r^N\ar@{<=>}^{\text{Lemma}~\ref{Lem:CEquivalentR}}[l]}
$$

\begin{lem}\label{Lem:CimpliesDeg}
Let $M,N\in R(M^0;\form)$. If $M\leq_c^\ee N$ then $M\degg^\ee N$.
\end{lem}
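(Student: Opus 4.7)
The plan is to reduce the assertion to a single elementary move and then exhibit, for each such move, an explicit one-parameter subgroup of the isometry group $\GL^\bullet(M^0,\form)$ whose limit at $0$ realizes it. Because degeneration is closed under composition, it is enough to treat the case where $N$ is obtained from $M$ by either a single symmetric cut or a single symmetric shift; in each case the goal is to produce a morphism $\gamma:\CC^\ast\rightarrow \GL^\bullet(M^0,\form)$ with $\lim_{\lambda\to 0}\gamma(\lambda)\cdot M=N$ (in $R(M^0;\form)$). Since $R(M^0;\form)$ is closed in $R(M^0)$ and $\gamma$ lands in the isometry group, such a limit automatically lies in $\GL^\bullet(M^0,\form)\cdot M$'s closure inside $R(M^0;\form)$.

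For a symmetric cut at column $q$ of the symmetric segment $[t,s]\cup[\sigma(s),\sigma(t)]\subset M$, I would choose bases $e_t,\ldots,e_s$ and $w_{\sigma(s)},\ldots,w_{\sigma(t)}$ of the two segments with $f_i(e_i)=e_{i+1}$, $f_j(w_j)=w_{j+1}$, and with the pairing $\langle e_i,w_{\sigma(i)}\rangle=\pm1$ (sign dictated by Lemma~\ref{Lem:AdjointEqui}) and zero otherwise. Define $\gamma(\lambda)$ to act as the identity outside the symmetric segment and, on it, by the weights
\[
a_i=\begin{cases}0,&i\leq q,\\ 1,&i>q,\end{cases}\qquad b_j=\begin{cases}0,&j\geq\sigma(q),\\ -1,&j\leq\sigma(q+1),\end{cases}
\]
on $e_i$ and $w_j$ respectively. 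The identity $a_i+b_{\sigma(i)}=0$ ensures $\gamma(\lambda)$ is an isometry. A direct computation shows $\gamma(\lambda)\cdot M$ equals $M$ except that on the arrow $f_q$ (resp.\ $f_{\sigma(q+1)}$) the basis vector $e_q$ (resp.\ $w_{\sigma(q+1)}$) is scaled by $\lambda$; letting $\lambda\to 0$ kills precisely these two arrows, which is exactly the symmetric cut.

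For a symmetric shift of $[r,s]$ from $[t,s]\cup[\sigma(s),\sigma(t)]$ to $[q,r]\cup[\sigma(r),\sigma(q)]$, pick paired bases as above for the four segments involved and construct a unipotent one-parameter subgroup: on the overlapping range $[q,r]$ use the standard Abeasis--Del Fra mixing that takes two overlapping segments to the shifted pair, and on the dual range $[\sigma(r),\sigma(q)]$ use the transpose-inverse of that mixing, extended by the identity elsewhere. The compatibility of the two unipotent factors with the form again reduces to matching weights $a_i+b_{\sigma(i)}=0$ in each basis block, so $\gamma(\lambda)$ lies in $\GL^\bullet(M^0,\form)$, and the limit realizes the shift on the original pair and simultaneously its dual on the symmetric partner.

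The main obstacle I expect is the bookkeeping when the two halves of the symmetric segment overlap, i.e.\ when $\sigma(t)\leq s$, and in particular when $s=\sigma(t)$ so the indecomposable is of the form $U_{t,\sigma(t)}$; these appear with even multiplicity in the split types by Theorem~\ref{Thm:MainEquioriented}, so one must pair up two copies before running the construction, and verify that the mixing on a pair of copies is simultaneously an isometry. Aside from this careful case distinction, the proof is an essentially mechanical check that the prescribed diagonal/unipotent one-parameter subgroup preserves the form and has the correct limit; once the cut and shift cases are established, Lemma~\ref{Lem:CimpliesDeg} follows by iterating.
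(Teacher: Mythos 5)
Your overall strategy -- reduce to a single symmetric cut or shift and exhibit a one-parameter subgroup of $\GL^\bullet(M^0,\form)$ realizing it -- is the natural one, but the paper takes a different and much shorter route to the same reduction: it observes that each elementary move corresponds to a short exact sequence $0\to U\to X\to Y\to 0$ with $U$ an indecomposable subrepresentation of $M$, notes that in the split types $U$ is automatically isotropic (Corollary~\ref{Cor:SplitIndIsotropic}), and then invokes the general degeneration theorem for isotropic reduction from \cite{BCI} (Theorem~3.1 there), which gives $M\degg^\ee U\oplus\nabla U\oplus U^\perp/U=N$ in one stroke. Your version buys explicitness -- an actual cocharacter of the isometry group, which is what Section~\ref{Sec_Algorithm} ultimately wants -- at the cost of having to verify the isometry condition and the limit by hand in every configuration; the paper buys uniformity by black-boxing exactly that verification into the cited theorem.

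For the cut your construction is complete and correct: the weights satisfy $a_i+b_{\sigma(i)}=0$, so $\gamma(\lambda)$ preserves the form, and the limit kills precisely the arrows $q\to q+1$ and $\sigma(q+1)\to\sigma(q)$ of the two dual segments. The shift case, however, is where the real content lies and is only sketched. The assertion that the isometry condition ``again reduces to matching weights'' is not accurate for the unipotent part: there one needs the block acting on the dual segments to be the inverse transpose of the mixing block, and one must then check that this inverse transpose actually induces the \emph{dual shift} in the limit rather than some other degeneration. More seriously, the difficulty you defer to the end is not only about segments $U_{t,\sigma(t)}$: it also arises when the two segments being mixed are dual to each other, e.g.\ when $[q,r]=[\sigma(s),\sigma(t)]$, so that the symmetric shift takes place inside a single indecomposable $\ee$-summand $U_{t,s}\oplus U_{\sigma(s),\sigma(t)}$. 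In that configuration the mixing block and its ``dual'' inverse-transpose block act on the same basis vectors, the block-diagonal Ansatz collapses, and a genuinely different matrix (simultaneously realizing the mixing and compatible with the form on a non-split pairing of the two segments) must be produced. Until that case is handled -- or replaced by an appeal to \cite[Theorem~3.1]{BCI} as in the paper -- the proof is not complete.
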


\begin{proof}
A cut at columns $q$ of the segment $[t,s]\subset M$ corresponds to the one-parameter subgroup of $\GL^\bullet(M^0)$ induced by the non-split short exact sequence
$$
\xymatrix{
0\ar[r]&U_{q+1, s}\ar[r]&U_{t,s}\ar[r]&U_{t,q}\ar[r]&0
}
$$
Since we are in a split type, by Corollary~\ref{Cor:SplitIndIsotropic}, $U_{q+1,s}$ is an isotropic subrepresentation of $M$. By \cite[Theorem~3.1]{BCI} we have $M\degg^\ee U_{q+1,s}\oplus U_{\sigma(s),\sigma(q+1)}\oplus U_{q+1,s}^\perp/U_{q+1,s}=N$. 

A shift of $[q,r]\subset M$ from $[t,s]\subset M$ to $[q,s]\subset N$ corresponds to the one-parameter subgroup of $\GL^\bullet(M^0)$ induced by the non-split short exact sequence
$$
\xymatrix{
0\ar[r]&U_{q, s}\ar[r]&U_{t,s}\oplus U_{q,r}\ar[r]&U_{t,r}\ar[r]&0
}
$$
Since we are in a split type, by Corollary~\ref{Cor:SplitIndIsotropic}, $U_{q,s}$ is an isotropic subrepresentation of $M$. By \cite[Theorem~3.1]{BCI} we have $M\degg^\ee U_{q,s}\oplus U_{\sigma(s),\sigma(q)}\oplus U_{q,s}^\perp/U_{q,s}=N$. 
\end{proof}

\begin{prop}\label{Prop:RankSequenceOrthogonal}
In the split types, let $M$ be an $\ee$-representation. Let $L=P_q=U_{q,n}$, suppose that $L$ embeds into $M$ and let $\iota:L\hookrightarrow M$ be a generic embedding. Then $\iota(L)$ is isotropic and the rank sequence of $M(1):=\iota(L)^\perp/\iota(L)$ is 
\begin{equation}\label{Eq:RankPerpQuotient}
r_{k,\ell}^{M(1)}=\left\{
\begin{array}{ll}
r_{k,\ell}^M-2&\text{if }r_{q,\ell}^M\leq r_{k,\ell}^M\text{ and }r_{k,\sigma(q)}^M\leq r_{k,\ell}^M,\\
r_{k,\ell}^M-1&\text{if }r_{q,\ell}^M\leq r_{k,\ell}^M\text{ or }r_{k,\sigma(q)}^M\leq r_{k,\ell}^M\text{ but not both},\\
r_{k,\ell}^M&\text{otherwise,}
\end{array}
\right.
\end{equation}
with the convention that $r^M_{s,t}=\infty$ if $s>t$.
In particular, $M\leq_c^\ee L\oplus\nabla L\oplus M(1)$.
\end{prop}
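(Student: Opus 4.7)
The first claim, that $\iota(L)$ is isotropic, is immediate from Corollary~\ref{Cor:SplitIndIsotropic}, since $L=U_{q,n}$ is indecomposable and we are in a split type. For the rank computation, the key structural observation is that the self-duality of $M$ as an $\ee$-representation gives $\iota(L)^\perp \cong \nabla Q$, where $Q=M/\iota(L)$. Indeed, dualising the short exact sequence $0\to \iota(L)\to M\to Q\to 0$ and composing with the isomorphism $M\cong \nabla M$ induced by $\form$ yields an embedding $\nabla Q\hookrightarrow M$ whose image is the kernel of $M\to \nabla \iota(L)$, namely $\iota(L)^\perp$. Consequently $M(1)=\iota(L)^\perp/\iota(L)\cong \nabla Q/\iota(L)$, and the induced inclusion $\iota(L)\hookrightarrow \nabla Q$ may be taken generic (inheriting genericity from $\iota$).

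This reduces the rank computation to two successive applications of Proposition~\ref{Prop:RankSequenceQuotient}. The first, applied to $\iota\colon L=U_{q,n}\hookrightarrow M$ with $s=n$ (so $r^M_{-,n+1}=0$), gives
\[
r^Q_{k,\ell} = r^M_{k,\ell} - \delta_1(k,\ell),
\]
where $\delta_1(k,\ell)=1$ if $q\leq \ell\leq n$ and $r^M_{q,\ell}\leq r^M_{k,\ell}$, and $0$ otherwise. The second, applied to $L\hookrightarrow \nabla Q$, combined with $r^{\nabla Q}_{k,\ell}=r^Q_{\sigma(\ell),\sigma(k)}$ and the $\ee$-symmetry $r^M_{a,b}=r^M_{\sigma(b),\sigma(a)}$ from Theorem~\ref{Thm:MainEquioriented}, yields a further subtraction $\delta_2(k,\ell)$ that simplifies to $1$ iff $1\leq k\leq \sigma(q)$ and $r^M_{k,\sigma(q)}\leq r^M_{k,\ell}$. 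A short case analysis --- checking that whenever one indicator fires, the other is unaffected by the first-step subtraction --- gives $r^{M(1)}_{k,\ell}=r^M_{k,\ell}-\delta_1(k,\ell)-\delta_2(k,\ell)$, which is exactly the trichotomy in the statement.

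For the last claim $M\leq_c^\ee L\oplus \nabla L\oplus M(1)$, the plan is to exhibit an explicit chain of symmetric cuts and shifts as in Definition~\ref{Def:SymmCutsShifts}, following the combinatorial strategy of the non-symmetric Corollary preceding Section~\ref{Sec:EE-rep}. That corollary analyses the function $f(k,\ell)=r^M_{q,\ell}-r^M_{k,\ell}$ (with $s=n$, so the $r^M_{\ast,s+1}$ terms drop out) and its minimal-index locus to produce cuts/shifts realising $M\leq_c L\oplus Q$. The dual function $f'(k,\ell)=r^M_{k,\sigma(q)}-r^M_{k,\ell}$ governs the subsequent step from $Q$ to $M(1)\oplus \nabla L$, producing a dual sequence of cuts/shifts. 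By the $\sigma$-symmetry of $r^M$, the minimal indices defining $f$ and those defining $f'$ are exchanged under $\sigma$, so each non-symmetric cut/shift pairs with its dual and the whole degeneration assembles into symmetric cuts and shifts. The main obstacle is making this pairing rigorous; the rank-sequence computation in the first part of the proof supplies the combinatorial data required for the matching.
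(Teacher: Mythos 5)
The paper's own ``proof'' here is only a pointer: it says the argument is analogous to the direct generic-vector analysis of Proposition~\ref{Prop:RankSequenceQuotient} (choosing generic $v_\ell$ spanning $\iota(L)$ and tracking $\dim(\im f_{k,\ell}\cap\ker f_{\ell,s+1})$, now also intersected with $\iota(L)^\perp$), together with the isotropy statement of Corollary~\ref{Cor:SplitIndIsotropic}. Your route is structurally different: you first observe $\iota(L)^\perp\cong\nabla Q$ with $Q=M/\iota(L)$ (this is correct -- the $\ee$-form gives $M\cong\nabla M$ and the annihilator of $\iota(L)$ is $\nabla Q$), and then try to obtain $r^{M(1)}$ by applying Proposition~\ref{Prop:RankSequenceQuotient} twice, once to $L\hookrightarrow M$ and once to the induced $L\hookrightarrow\nabla Q$, using $r^{\nabla Q}_{k,\ell}=r^Q_{\sigma(\ell),\sigma(k)}$ and the $\sigma$-symmetry of $r^M$. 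The bookkeeping does assemble into the stated trichotomy (I checked it on examples), so the reduction is attractive. But it has a genuine gap: Proposition~\ref{Prop:RankSequenceQuotient} computes the rank sequence of the \emph{generic} quotient, whereas the embedding $\iota(L)\subseteq\iota(L)^\perp\cong\nabla Q$ is the specific one forced by $\iota$; for a non-generic embedding the quotient's ranks can only be smaller, so your argument as written only yields $r^{M(1)}_{k,\ell}\leq$ the right-hand side of \eqref{Eq:RankPerpQuotient}. ``Inheriting genericity from $\iota$'' is exactly the point that must be proved: genericity of $\iota$ among embeddings $L\to M$ does not formally imply that the induced embedding is generic among embeddings $L\to\iota(L)^\perp$, since $\iota(L)^\perp$ itself moves with $\iota$. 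One can repair this (e.g.\ by semicontinuity of $\iota\mapsto r^{\iota(L)^\perp/\iota(L)}$ on the irreducible space of embeddings plus an explicit $\iota$ attaining the bound, or by the direct computation the paper intends), but as it stands the equality in \eqref{Eq:RankPerpQuotient} is not established.

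The second shortfall is the final claim $M\leq_c^\ee L\oplus\nabla L\oplus M(1)$, which in your write-up is a plan rather than a proof: you propose to pair each cut/shift produced by the unnamed corollary after Proposition~\ref{Prop:RankSequenceQuotient} with its $\sigma$-dual and you yourself flag that ``making this pairing rigorous'' is the obstacle. The right-hand side of \eqref{Eq:RankPerpQuotient} is indeed $\sigma$-symmetric (the two conditions are exchanged under $(k,\ell)\mapsto(\sigma(\ell),\sigma(k))$), which is the reason the drops organize into symmetric cuts and symmetric shifts in the sense of Definition~\ref{Def:SymmCutsShifts}; but one still has to run the analogue of the $f(k,\ell)$-analysis (locating the minimal indices $t_1,t_2,q_1,q_2$) in the symmetric setting and check the self-dual boundary cases. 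Since Lemma~\ref{Lem:CEquivalentR} relies on precisely this ``in particular'' statement, leaving it at the level of a strategy leaves the main theorem unsupported. In short: the duality reduction is a legitimate and arguably cleaner alternative to the paper's direct computation, but both the genericity transfer and the combinatorial pairing need to be carried out before the proposal counts as a proof.
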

\begin{proof}
The proof of this result is similar to the proof of Proposition~\ref{Prop:RankSequenceQuotient} and it uses Corollary~\ref{Cor:Iso}.
\end{proof}
\begin{rem}
The generic quotient of $\iota(L)^\perp$ by $L$ is not necessarily $\iota(L)^\perp/\iota(L)$ (see \cite[Section~6.4]{BCI}).
\end{rem}

%
\begin{lem}\label{Lem:CEquivalentR}
Let $M,N\in R(M^0;\form)$. Then $M\leq_c^\ee N$ if and only if $r^M\geq r^N$.
\end{lem}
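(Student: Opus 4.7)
My plan splits along the two implications of the lemma.

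For $(\Rightarrow)$, I would argue directly from Lemma~\ref{Lem:RankCutShift}. By Definition~\ref{Def:SymmCutsShifts}, every symmetric cut (respectively, shift) is the composition of an ordinary cut (shift) with its $\sigma$-dual, and each constituent operation weakly decreases every entry of the rank sequence by the two cases of Lemma~\ref{Lem:RankCutShift}. Composing over a finite sequence of symmetric operations from $M$ to $N$ yields $r^M\geq r^N$.

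For $(\Leftarrow)$, which is the substantive content, I would induct on the non-negative integer
$$D(M,N)=\sum_{1\leq i\leq j\leq n}\bigl(r^M_{i,j}-r^N_{i,j}\bigr).$$
When $D(M,N)=0$ the rank sequences coincide and $M\cong N$ by Lemma~\ref{Lem:RankMultiplicity}, so the conclusion is trivial. For the inductive step, I would first invoke the classical non-symmetric Abeasis--Del Fra theorem to obtain an ordinary cut or shift $M\to M'$ with $r^{M'}\geq r^N$ and $D(M',N)<D(M,N)$. Since $M$ is an $\ee$-representation, the $\sigma$-image of the segment being modified is also a segment of $M$, so the dual cut or shift is defined; performing both simultaneously yields a symmetric operation $M\to M''$ as in Definition~\ref{Def:SymmCutsShifts}.

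The heart of the argument is to verify that $r^{M''}\geq r^N$, after which the inductive hypothesis applies to $M''$ and $N$. By Lemma~\ref{Lem:RankCutShift}, $r^{M''}_{k,\ell}=r^M_{k,\ell}-a(k,\ell)$ for some $a(k,\ell)\in\{0,1,2\}$ obtained by summing the rank drops of the two constituent operations. When $a(k,\ell)\leq 1$ the position $(k,\ell)$ lies in the rectangle of either the original operation or its dual: in the former case the desired inequality is exactly $r^{M'}_{k,\ell}\geq r^N_{k,\ell}$, and in the latter it follows by applying that same inequality at the mirrored position $(\sigma(\ell),\sigma(k))$ and using the $\sigma$-symmetries $r^M_{k,\ell}=r^M_{\sigma(\ell),\sigma(k)}$ and $r^N_{k,\ell}=r^N_{\sigma(\ell),\sigma(k)}$ from Theorem~\ref{Thm:MainEquioriented}.

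The main technical obstacle, as I see it, is the overlap case $a(k,\ell)=2$: this can occur only when the segment being modified is self-$\sigma$-dual and the two rectangles of Lemma~\ref{Lem:RankCutShift} intersect, and there one must establish the stronger slack $r^M_{k,\ell}-r^N_{k,\ell}\geq 2$. I expect this to follow from the parity constraint on self-dual segments $[i,\sigma(i)]$ in the split types (Theorem~\ref{Thm:MainEquioriented} forces $r^M_{i,\sigma(i)}$ and $r^N_{i,\sigma(i)}$ to be even, so any strict inequality between them on the diagonal is at least $2$), possibly combined with a refined choice of the initial Abeasis--Del Fra operation, e.g.\ of minimal segment length compatible with $r^{M'}\geq r^N$, so that its symmetric closure is forced into the overlap only at positions where the doubled slack is guaranteed. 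Careful bookkeeping of this overlap should constitute the bulk of the full write-up.
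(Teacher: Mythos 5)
Your forward implication is exactly the paper's (one line from Lemma~\ref{Lem:RankCutShift}, plus the observation that a symmetric operation is a pair of ordinary ones), and reducing the converse to the existence, whenever $r^M>r^N$, of a symmetric cut or shift $M\to M''$ with $r^{M''}\geq r^N$ is a legitimate strategy. The gap sits precisely where you locate it, but your proposed repair does not close it. First, the overlap of the two rectangles of Lemma~\ref{Lem:RankCutShift} is \emph{not} confined to self-dual segments: in $\stackrel{\rightarrow}{A_7}$ the symmetric segment $[1,6]\cup[2,7]$, cut with $q=4$, has rectangle $\{1,2,3\}\times\{4,5,6\}$ while its dual cut has rectangle $\{2,3,4\}\times\{5,6,7\}$, and these overlap in $\{2,3\}\times\{5,6\}$. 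Second, and more seriously, the overlap contains positions $(k,\ell)$ with $\ell\neq\sigma(k)$ (here $(2,5)$ and $(3,6)$), where Theorem~\ref{Thm:MainEquioriented} imposes no parity constraint on $r^M_{k,\ell}$ or $r^N_{k,\ell}$; the $\sigma$-symmetry of the rank sequences only tells you that the slack at $(2,5)$ equals the slack at $(3,6)$, not that either is at least $2$. So at such positions you genuinely need the ``refined choice of the initial Abeasis--Del Fra operation,'' and proving that some admissible ordinary move always symmetrizes without overshooting is the entire content of the hard direction; in your write-up it is asserted, not proved.

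The paper sidesteps this bookkeeping by a different induction. It chooses the maximal $i$ with $r^N_{i,n}>r^N_{i-1,n}$, so that $L=P_i$ splits off $N$ as $L\oplus\nabla L\oplus N(1)$; it then takes a \emph{generic isotropic} embedding $\iota:L\hookrightarrow M$ (isotropy is automatic in the split types by Corollary~\ref{Cor:SplitIndIsotropic}) and sets $M(1)=\iota(L)^\perp/\iota(L)$. Proposition~\ref{Prop:RankSequenceOrthogonal} gives a closed formula for $r^{M(1)}$ in which the drops of $2$ are built in and shows $M\leq_c^\ee L\oplus\nabla L\oplus M(1)$; a short computation then yields $r^{M(1)}\geq r^{N(1)}$, and the induction runs on the reduced pair. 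To complete your route you would have to either establish directly the existence of a non-overshooting symmetric move (handling in particular the non-anti-diagonal overlap positions), or import something equivalent to Proposition~\ref{Prop:RankSequenceOrthogonal}.
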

\begin{proof}
By Lemma~\ref{Lem:RankCutShift}, $M\leq_c^\ee N$ implies $r^M\geq r^N$. To prove the converse suppose that $r^M\geq r^N$. If $r^M=r^N$ then in view of \eqref{Eq:MWZTheorem}, $M$ and $N$ are isomorphic as $\ee$-representations and hence there is nothing to prove. Let us assume that $r^M> r^N$. Without loss of generality we can assume that $\mathrm{dim}\,M_n=r_{n,n}^M=r_{n,n}^N>0$, because otherwise both $M$ and $N$ are supported on a smaller quiver of the same type. Let $i$ be the maximal index such that $r_{i,n}^N>r_{i-1,n}^N$. By Lemma~\ref{Lem:Embeddings}, $P_i=U_{i,n}$ is a direct summand of $N$ (recall our convention that $r_{0,n}^N=0=r_{i,n+1}$). For simplicity of notation we put $L:=P_i$.  We decompose $N$ as $N=L\oplus \nabla L\oplus N(1)$ for a suitable $\ee$-representation $N(1)$. Since $r_{i,n}^M\geq r_{i,n}^N>0$, by Lemma~\ref{Lem:Embeddings}, $P_i$ embeds into $M$. By Corollary~\ref{Cor:SplitIndIsotropic}, every embedding of $L$ into $M$ is isotropic. Let $\iota:L\rightarrow M$ be a generic (isotropic) embedding of $L$ into $M$. Let $M(1):=\iota(L)^\perp/\iota(L)$. By Proposition~\ref{Prop:RankSequenceOrthogonal}, $M\leq_c^\ee L\oplus\nabla L\oplus M(1)$. We claim that $r^{M(1)}\geq r^{N(1)}$. The rank sequence $r^{M(1)}$ of $M(1)$ is given by \eqref{Eq:RankPerpQuotient}. Since $L^\perp\degg L\oplus N(1)$ we have $r^{L^\perp}_{k,\ell}\geq r^{L\oplus N(1)}_{k,\ell}$, for every $1\leq k\leq \ell\leq n$; moreover 
\begin{equation}\label{Eq:RankLoplusX}
r_{k,\ell}^{L\oplus N(1)}=\left\{\begin{array}{ll}r_{k,\ell}^{N(1)}+1&\text{if }i\leq k\leq\ell\leq j;\\&\\r_{k,\ell}^{N(1)}&\text{otherwise.}\end{array}\right.
\end{equation}
For $i\leq k\leq \ell\leq j$, we have 
$$
r_{i,\ell}^M-r_{i,j+1}^M=\dim(\im\,f_{i,\ell}\cap \Ker\,f_{\ell,j+1})\leq \dim (\im\,f_{k,\ell}\cap \Ker\,f_{\ell,j+1})=r_{k,\ell}^M-r_{k,j+1}^M.
$$
Combining \eqref{Eq:RankPerpQuotient} with \eqref{Eq:RankLoplusX} we get for $i\leq k\leq \ell\leq j$
$$
r_{k,\ell}^{M(1)}=r_{k,\ell}^{L^\perp}-1\geq r_{k,\ell}^{L\oplus N(1)}-1=r_{k,\ell}^{N(1)}.
$$
By induction (on the rank difference) we get $M(1)\leq_c^\ee N(1)$. Thus, $M\leq_c^\ee L\oplus \nabla L\oplus M(1)\leq_c^\ee L\oplus\nabla L\oplus N(1)=N$.
\end{proof}

\section{Linear degenerate symplectic flag varieties: PBW locus}\label{Sec:PBW-locus}

For a positive integer $n$, set $[n]=\{1,2,\ldots,n\}$. In this section we fix a subset $\bi=\{i_1,\ldots,i_t\}\subseteq [n-1]$ with $1\leq i_1<\ldots<i_t\leq n-1$. For convenience we extend the notation by requiring $i_0=0$.

\subsection{Lagrangian quiver Grassmannian}

We will define Lagrangian quiver Grassmannian in the special case $(\stackrel{\rightarrow}{A}_{\textrm{odd}},-1)$. Thus, in this section we work with the quiver $\stackrel{\longrightarrow}{A_{2n-1}}$ with the following labelling of vertices and arrows: 
$$1\stackrel{\alpha_1}{\longrightarrow} 2\stackrel{\alpha_2}{\longrightarrow} \cdots \longrightarrow n-1\stackrel{\alpha_{n-1}}{\longrightarrow}\omega \stackrel{\alpha_{n-1}^*}{\longrightarrow} (n-1)^*\longrightarrow\cdots\stackrel{\alpha_1^*}{\longrightarrow} 1^*$$
General theory of Lagrangian quiver Grassmannians will be developed in a separate publication.

Let $M\in R(M^0;\form)$ be a symplectic $\stackrel{\longrightarrow}{A_{2n-1}}$-representation. A subrepresentation $N\subseteq M$ is called \emph{Lagrangian}, if the vector subspace 
$$N^0=\bigoplus N_i\subseteq \bigoplus M_i=M^0$$ 
is Lagrangian, i.e. if $N^0=(N^0)^\perp$ where $(N^0)^\perp$ denotes the orthogonal of $N^0$ with respect to the symplectic form $\form$. By identifying $M^0$ with its dual via the non-degenerate bilinear form, we see that $\nabla(M^0/N^0)$ is the annihilator of $N^0$ and thus, $N\subset M$ is Lagrangian if and only if   
$M/N\cong \nabla N$. We notice that if $N\subset M$ is Lagrangian then, since $\omega=\sigma(\omega)$, $N_\omega\subset M_\omega$ is a Lagrangian subspace of the symplectic vector space $(M_\omega,\form|_{M_\omega\times M_\omega})$.

\begin{definition}
Let $\be=(e_1,\ldots,e_{1^*})\in\mathbb{N}^{2n-1}$ be a dimension vector. The Lagrangian quiver Grassmannian $\mathrm{Lag}_\be(M)$ for a symplectic $\stackrel{\longrightarrow}{A_{2n-1}}$-representation $M$ and a dimension vector $\be$ is defined to be the set of Lagrangian submodules of $M$ of dimension vector $\be$.
\end{definition}

The Lagrangian quiver Grassmannian is a closed subset of the quiver Grassmannian $\mathrm{Gr}_\be(M)$ and is embedded into the product of Grassmannians $\prod_{i\in Q_0}\mathrm{Gr}_{e_i}(M_i)$. One can show that if $M$ is  projective, then $\mathrm{Lag}_{\underline{\mathrm{dim}}M}(M\oplus\nabla(M))$ is irreducible and reduced. Since this result is not necessary in the paper, we omit its proof.

\subsection{Abelianizations of symplectic flag varieties}

Let $\g=\mathfrak{sp}_{2n}(\mathbb{C})$ be the Lie algebra of type $C_n$. Fix a triangular decomposition $\g = \n_+ \oplus \h \oplus \n_-$ of $\g$. Denote by $\Phi$ the root system of $\g$, $\Phi^+\subseteq\Phi$ the set of positive roots, $\{\alpha_1,\ldots,\alpha_n\}$ the set of simple roots, $\{\alpha_1^\vee,\ldots,\alpha_n^\vee\}$ the set of simple coroots, $\Lambda^+$ the set of dominant integral weights and $\varpi_1,\ldots,\varpi_n$ the fundamental weights.

The set of positive roots $\Phi^+$ consists of $\alpha_{i,j}:=\alpha_i+\ldots+\alpha_j$ for $1 \leq i\leq j\leq n$ and $\alpha_{i,\overline{j}}:=\alpha_{i,n}+\alpha_{j,n-1}$ for $1\leq i\leq j\leq n-1$. For $\beta\in \Phi^+$ we fix a non-zero generator $f_\beta$, called a root vector, in the weight space of $\n_-$ of weight $-\beta$. 

We are going to define a Lie algebra filtration on $\n_-$ and an algebra filtration on $U(\n_-)$. For $\bd\in \mathbb{R}^{\Phi^+}$ denote $d_\beta:=\bd(\beta)$. Consider the following subspaces of $\n_-$ and $U(\n_-)$: for $k\in\mathbb{N}$,
$$(\n_{-})^{\mathbf{d}}_{\leq k}=\langle f_\beta\mid\, \beta\in\Phi^+,\  d_\beta\leq k\rangle_{\mathbb{C}},$$
$$U(\n_{-})^{\mathbf{d}}_{\leq k}=\langle f_{\beta_1}\cdots f_{\beta_\ell}\mid \beta_1,\ldots,\beta_\ell\in\Phi^+, d_{\beta_1}+\ldots+d_{\beta_\ell}\leq k \rangle_{\mathbb{C}}.$$
In general they do not give an $\mathbb{N}$-filtration of Lie algebra and an $\mathbb{N}$-filtration of algebra. We will impose more assumptions on the function $\bd$. For simplicity we will write $d_{i,j}:=d_{\alpha_{i,j}}$ and $d_{i,\overline{j}}:=d_{\alpha_{i,\overline{j}}}$. Fix the following total order on the indices:
$$1<2<\ldots<n<\overline{n-1}<\ldots<\overline{1}.$$

Let $F^{\bi}\subseteq\mathbb{R}^{\Phi^+}$ be a polyhedral cone defined by the following equalities and inequalities: for 
\begin{enumerate}
\item[-] for $1\leq i\leq j\leq n-1$, $j+1\leq \ell\leq \overline{i}$ with $j\in\bi$, $d_{i,j}+d_{j+1,\ell}\geq d_{i,\ell}$;
\item[-] for $1\leq i\leq \ell\leq j+1\leq n$ with $j\in\bi$, $d_{i,j}+d_{\ell,\overline{j+1}}\geq d_{i,\overline{\ell}}$;
\item[-] for any other two roots $\beta_1,\beta_2\in\Phi^+$ being not in the above two cases and satisfying $\beta_1+\beta_2\in\Phi^+$, $d_{\beta_1}+d_{\beta_2}=d_{\beta_1+\beta_2}$.
\item[-] for $1\leq i< j\le k <\ell\leq n$ , $d_{i,k}+d_{j,\ell} = d_{i,\ell} + d_{j,k}$;
\item[-] for $1\leq i< j\le k ,\ell\leq n$ , $d_{i,\overline k}+d_{j,\ell} = d_{i,\ell} + d_{j,\overline k}$;
\item[-] for $1\leq i< j< k <\ell\leq n-1$ , $d_{i,\overline j}+d_{k,\overline \ell} = d_{i,\overline k} + d_{j,\overline \ell} = d_{i,\overline \ell}+d_{j,\overline k}$.
\end{enumerate}

Such polyhedral cones are termed \emph{Dynkin faces} in \cite{EFFS}. It is shown in \textit{loc.cit.} that such faces are nonempty.

Let $\mathrm{relint}(F^\bi)$ denote the relative interior of the cone $F^\bi$. The following results are proved in \textit{loc.cit.}:

\begin{prop}\label{Prop:Prelim}
Fix $\bd\in F^\bi$, the following statements hold:
\begin{enumerate}
\item The subspaces $\{(\n_{-})^{\mathbf{d}}_{\leq k}\mid k\in\mathbb{N}\}$ defines a Lie algebra filtration on $\n_-$ and the subspaces $\{U(\n_{-})^{\mathbf{d}}_{\leq k}\mid k\in\mathbb{N}\}$ defines an algebra filtration on $U(\n_-)$. 
\item Let $\n_-^\bd$ and $U(\n_-)^\bd$ denote the associated graded Lie algebra and the associated graded algebra associated to the filtrations above. Then $U(\n_-^\bd)\cong U(\n_-)^\bd$.
\item For any $\bd,\bd'\in\mathrm{relint}(F^\bi)$, $\n_-^\bd\cong\n_-^{\bd'}$ as Lie algebras and $U(\n_-^\bd)\cong U(\n_-^{\bd'})$ as algebras.
\end{enumerate}
\end{prop}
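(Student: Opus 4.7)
The plan is to verify the three parts by reducing each to a root-by-root check governed by the defining (in)equalities of $F^\bi$.

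For part (i), by bilinearity it suffices to check the filtration condition on root vectors. For $\beta_1,\beta_2\in\Phi^+$, the bracket $[f_{\beta_1},f_{\beta_2}]$ is a scalar multiple of $f_{\beta_1+\beta_2}$ whenever $\beta_1+\beta_2\in\Phi^+$, and vanishes otherwise. So the content is to show
\[
d_{\beta_1}+d_{\beta_2}\geq d_{\beta_1+\beta_2}\qquad\text{whenever }\beta_1+\beta_2\in\Phi^+.
\]
In type $C_n$ every positive root has the form $\alpha_{i,j}$ or $\alpha_{i,\overline{j}}$, and the possible decompositions $\beta_1+\beta_2\in\Phi^+$ fall into the cases listed in items 1, 2, 3 of the definition of $F^\bi$: the first two give the required inequality, and the third gives equality. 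The induced algebra filtration on $U(\n_-)$ is then immediate from extending brackets to products.

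For part (ii), I would apply the Poincaré–Birkhoff–Witt theorem. Fix an ordering on $\Phi^+$; the ordered monomials $f_{\beta_1}^{n_1}\cdots f_{\beta_N}^{n_N}$ form a basis of $U(\n_-)$ and a given monomial lies in $U(\n_-)^\bd_{\leq n_1 d_{\beta_1}+\cdots+n_N d_{\beta_N}}$. Straightening arbitrary monomials using the bracket relations (whose errors live in strictly lower filtration by part (i)) shows that the symbols of these PBW monomials span each associated graded piece, and comparing with the analogous PBW basis of $U(\n_-^\bd)$ gives a canonical surjection $U(\n_-^\bd)\twoheadrightarrow U(\n_-)^\bd$ which is a bijection on bases, hence an isomorphism.

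For part (iii), the key observation is that the bracket in $\n_-^\bd$ on the symbols $\overline{f_\beta}$ of the root vectors is
\[
[\overline{f_{\beta_1}},\overline{f_{\beta_2}}]=\begin{cases}c\cdot \overline{f_{\beta_1+\beta_2}}&\text{if }d_{\beta_1}+d_{\beta_2}=d_{\beta_1+\beta_2},\\ 0&\text{if }d_{\beta_1}+d_{\beta_2}>d_{\beta_1+\beta_2},\end{cases}
\]
where $c$ is the classical structure constant. The set of root-sum decompositions for which the equality holds is constant throughout $\mathrm{relint}(F^\bi)$: in the relative interior, items 1 and 2 hold as strict inequalities while items 3–6 hold as equalities by construction. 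Hence the structure constants of $\n_-^\bd$ in the basis $\{\overline{f_\beta}\}_{\beta\in\Phi^+}$ are the same for every $\bd\in\mathrm{relint}(F^\bi)$, giving the Lie algebra isomorphism; the enveloping algebras are then isomorphic by functoriality (or by part (ii)). The main obstacle is the combinatorial verification underlying part (i): one must check that items 1 and 2 list \emph{every} sum-of-roots relation of type $C_n$ that is broken by a cut at some $j\in\bi$, which requires enumerating the shapes of $\beta_1+\beta_2\in\Phi^+$ and matching them to the cases in the definition of $F^\bi$.
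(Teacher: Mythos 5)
The paper does not actually prove this proposition: it is quoted verbatim from the reference \cite{EFFS} (``The following results are proved in \emph{loc.cit.}''), so your self-contained argument cannot be compared with an in-paper proof --- but it is correct and is essentially the standard argument in the PBW-degeneration literature. Two remarks. First, the ``main obstacle'' you flag at the end is not actually an obstacle for part (i): item 3 in the definition of $F^\bi$ is a catch-all clause imposing $d_{\beta_1}+d_{\beta_2}=d_{\beta_1+\beta_2}$ on \emph{every} decomposition $\beta_1+\beta_2\in\Phi^+$ not covered by items 1--2, so the inequality $d_{\beta_1}+d_{\beta_2}\geq d_{\beta_1+\beta_2}$ holds for all decompositions by construction, with no enumeration needed; the enumeration only matters if one wants to know which brackets survive in $\n_-^\bd$, i.e.\ for identifying the degenerate Lie algebra, not for the filtration property. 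Second, in part (iii) your justification that ``items 1 and 2 hold as strict inequalities'' on $\mathrm{relint}(F^\bi)$ is slightly too strong as stated --- some of those inequalities could in principle be forced into equalities by the equality constraints of items 3--6, and then they are equalities on all of $F^\bi$; the clean statement is the general polyhedral fact that a linear functional which is nonnegative on a convex set and vanishes at one relative interior point vanishes identically, so the active set $\{(\beta_1,\beta_2): d_{\beta_1}+d_{\beta_2}=d_{\beta_1+\beta_2}\}$ is the same for every $\bd\in\mathrm{relint}(F^\bi)$. With that correction your conclusion --- identical structure constants on $\{\overline{f_\beta}\}$, hence isomorphic graded Lie algebras and, by functoriality together with part (ii), isomorphic associated graded enveloping algebras --- goes through.
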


For $\lambda\in\Lambda^+$, let $V(\lambda)$ be the finite dimensional irreducible representation of $\g$ of highest weight $\lambda$. Fix a highest weight vector $v_\lambda\in V(\lambda)$, then $V(\lambda)=U(\n_-)\cdot v_\lambda$. The algebra filtration on $U(\n_-)$ induces a filtration on $V(\lambda)$ by defining 
$$(V(\lambda))_{\leq k}^{\bd}=U(\n_-)^{\bd}_{\leq k}\cdot v_\lambda.$$
Let $V^\bd(\lambda)$ denote the associated graded vector space. It carries naturally a graded $U(\n_-^\bd)$-module structure which is cyclic, that is to say, $V^\bd(\lambda)=U(\n_-^\bd)\cdot v_\lambda^\bd$ where $v_\lambda^\bd$ is the image of $v_\lambda$ in $V^\bd(\lambda)$. 

Let $N_-^\bd:=\exp(\n_-^\bd)$ denote the connected Lie group with Lie algebra $\n_-^\bd$. 

\begin{definition}
For $\lambda\in\Lambda^+$, we define the $\bd$-degenerate flag variety associated to $\lambda$ by:
$$\mathcal{F}^{\bd}(\lambda):=\overline{N^{\bd}_-\cdot [v_\lambda^\bd]}\hookrightarrow \mathbb{P}(V^{\bd}(\lambda)),$$
where $[v_\lambda^\bd]$ is the highest weight line generated by $v_\lambda^\bd$ in the projective space $ \mathbb{P}(V^{\bd}(\lambda))$. 
\end{definition}

It is shown in \cite{EFFS} that for any $\bd,\be\in\mathrm{relint}(F^\bi)$, the projective varieties $\mathcal{F}^{\bd}(\lambda)$ and $\mathcal{F}^{\be}(\lambda)$ are isomorphic.

\subsection{Schubert varieties}

To the subset $\bi=\{i_1,\ldots,i_t\}$ which is fixed in the beginning of this section, we associate a Schubert variety for the symplectic group of higher rank. 

Let $\wt{\g}=\mathfrak{sp}_{2(n+t)}$ be the symplectic Lie algebra of type $C_{n+t}$ with a triangular decomposition $\wt{\g}=\wt{\n}_+\oplus\wt{\mathfrak{h}}\oplus\wt{\n}_-$. The set of positive roots will be denoted by $\wt{\Phi}^+$ and the simple roots are $\wt{\alpha}_1$, $\ldots$, $\wt{\alpha}_{n+t}$. The weight lattice is $\wt{\Lambda}$, the set of dominant integral weights is $\wt{\Lambda}^+$ and the fundamental weights are $\wt{\varpi}_1$, $\ldots$, $\wt{\varpi}_{n+t}$. Let $\wt{G}$ be the connected simply connected algebraic group of Lie algebra $\wt{\g}$, $\wt{B}$ be the Borel subgroup in $\wt{G}$ of Lie algebra $\mathfrak{b}_+:=\wt{\n}_+\oplus \wt{\mathfrak{h}}$, $W(\wt{\g})$ be the Weyl group of $\wt{\g}$ of simple reflections $s_1,\ldots,s_{n+t}$.

We will consider the following Weyl group element in $W(\wt{\g})$:
\begin{equation}\label{Eq:WGElement}
w_\bi=s_{n+t}(s_{n+t-1}s_{n+t})\cdots (s_{t+1}\cdots s_{n+t})v_t\cdots v_1
\end{equation}
where 
$$v_k:=(s_k\cdots s_{i_k+k-1})\cdots (s_k\cdots s_{i_{k-1}+k}).$$ 
It is shown in \cite{EFFS} that these decompositions of $w_\bi$ and $v_k$ are reduced.

For $\wt{\lambda}\in\wt{\Lambda}^+$ let $\wt{V}(\wt{\lambda})$ be the irreducible representation of $\wt{\g}$ of highest weight $\wt{\lambda}$. Let $v_{w_\bi(\wt{\lambda})}:=w_\bi(v_{\wt{\lambda}})$ be an extremal weight vector in $\wt{V}(\wt{\lambda})$. The Demazure module $\wt{V}_{w_\bi}(\wt{\lambda})$ is defined as $U(\mathfrak{b}_+)\cdot v_{w_\bi(\wt{\lambda})}$.

The Schubert variety associated to $w_\bi$ is defined to be 
$$X_{w_\bi}:=\overline{\wt{B}\cdot v_{w_\bi(\wt{\lambda})}}\hookrightarrow \wt{G}/\wt{P}_{\wt{\lambda}},$$
where $\wt{P}_{\wt{\lambda}}$ is the parabolic subgroup in $\wt{G}$ stabilizing $\wt{\lambda}$.

\subsection{Linear degenerate symplectic flag varieties: PBW locus}

We fix on $V:=\mathbb{C}^{2n}$ a symplectic form $\omega_V(-,-)$ and let $\mathrm{Gr}_i(V)$ be the Grassmann variety of subspaces of dimension $i$ in $V$. Fix a basis $v_1,\ldots,v_n,v_{n+1},\ldots,v_{2n}$ of $V$ satisfying $\omega_V(v_i,v_{2n+1-j})=\delta_{i,j}$ for $1\leq i, j\leq n$. For a subset $K\subset [2n]$, let $\mathrm{pr}_K:V\to V$ be the linear map defined by
$$\mathrm{pr}_K\left(\sum_{i\in [2n]}\lambda_iv_i\right)=\sum_{i\in [2n]\setminus K}\lambda_iv_i$$

For the subset $\bi$ fixed in the beginning of this section, we define linear maps $f_1,\ldots,f_{n-1}$ by
$$f_i:=\begin{cases}
\mathrm{pr}_{i_k+1}, & \text{if $i=i_k$ for some $1\leq k\leq t$};\\
\mathrm{id}, & \text{otherwise}.
\end{cases}$$

\begin{definition}\label{Def:LDSFV}
The associated linear degenerate symplectic flag variety is defined by
\begin{eqnarray*}
\mathrm{Sp}\mathcal{F}_{2n}^\bi:=\{(V_1,\ldots,V_n)\in\prod_{i=1}^n\mathrm{Gr}_i(V)\mid\ f_i(V_i)&\subseteq& V_{i+1}\text{ for $1\leq i\leq n-1$}\\
& &\text{and $V_n$ is Lagrangian}\}.
\end{eqnarray*}
\end{definition}

By definition it is a closed subset in the product of Grassmann varieties, it is not yet clear whether $\mathrm{Sp}\mathcal{F}_{2n}^\bi$ is irreducible or reduced, so we consider the reduced structure on it.

\subsection{Main result on PBW locus}

For the fixed $\bi\subseteq [n-1]$ consider the set $[n+t]\setminus\{i_1+1,i_2+2,\ldots,i_t+t\}$ and order the elements in the set as $j_1<\ldots<j_n$. Define a map $\sigma_\bi:[n]\to [n+t]$ sending $1\leq k\leq n$ to $j_k$ and define a morphism of monoids $\Psi:\Lambda^+\to\wt{\Lambda}^+$ by:
$$\Psi(\lambda):=\sum_{j=1}^n(\lambda,\alpha_{j}^\vee) \wt{\varpi}_{\sigma_\bi(j)}.$$

Let 
$$M^\bi:=P_1^{\oplus n-t}\oplus\bigoplus_{k=1}^t P_{\sigma(i_k)}\oplus \bigoplus_{k=1}^t P_{i_k+1}$$
be a projective $\stackrel{\longrightarrow}{A_{2n-1}}$-representation and $\be=(1,2,\ldots,2n-1)\in\mathbb{N}^{2n-1}$ be a dimension vector. 

\begin{thm}\label{Thm:Iso}
The following varieties are isomorphic:
\begin{enumerate}
\item[(1)] the linear degenerate flag variety $\mathrm{Sp}\mathcal{F}_{2n}^\bi$;
\item[(2)] the Lagrangian quiver Grassmannian $\mathrm{Lag}_{\be}(M^\bi\oplus\nabla (M^\bi))$;
\item[(3)] the Schubert variety $X_{w_\bi}$;
\item[(4)] for any $\bd\in \mathrm{relint}(F^\bi)$, the $\bd$-degenerate flag variety $\mathcal{F}^\bd(\rho)$, where $\rho=\varpi_1+\ldots+\varpi_n$ is the sum of all fundamental weights.
\end{enumerate}
\end{thm}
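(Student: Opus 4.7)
The plan is to prove the four-way isomorphism by establishing a short cycle: first $(1)\Leftrightarrow(2)$ by an explicit comparison of the moduli problems, then $(3)\Leftrightarrow(4)$ by invoking the results of \cite{EFFS}, and finally bridging the two remaining pairs through $(2)\Leftrightarrow(3)$.

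For $(1)\Leftrightarrow(2)$, I would fix a $\sigma$-compatible symplectic form on the underlying graded space of $M^\bi\oplus\nabla(M^\bi)$, turning it into a symplectic representation of $\stackrel{\longrightarrow}{A_{2n-1}}$ in the sense of Section~\ref{Sec:EE-rep}. The fiber at the central vertex $\omega$ then identifies canonically with the symplectic space $V=\CC^{2n}$, and the fibers at the remaining vertices with the ambient spaces of the flag $(V_1,\ldots,V_{n-1})$. A direct computation using the decomposition $M^\bi=P_1^{\oplus n-t}\oplus\bigoplus_k P_{\sigma(i_k)}\oplus\bigoplus_k P_{i_k+1}$ shows that the arrows of this quiver representation coincide, up to the identification above, with the maps $f_i$ of Definition~\ref{Def:LDSFV}: the summands $P_{i_k+1}$ force the rank drop at vertex $i_k$ that corresponds to the projection $\mathrm{pr}_{i_k+1}$, while the remaining summands keep the arrow an identity. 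A Lagrangian subrepresentation of dimension vector $\be=(1,2,\ldots,2n-1)$ is then a chain $V_1\subset\cdots\subset V_\omega\subset V_\omega^\perp\subset\cdots\subset V_1^\perp$, where the Lagrangian condition $M/N\simeq\nabla N$ forces $V_\omega$ to be Lagrangian in $V$ and $V_{n+k}=V_{n-k}^\perp$ for $k\geq 1$. The projection to $(V_1,\ldots,V_\omega)$ is then manifestly a scheme morphism onto $\mathrm{Sp}\mathcal{F}_{2n}^\bi$, and the reconstruction by symplectic complements is its regular inverse.

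For $(3)\Leftrightarrow(4)$ I would invoke directly the main theorem of \cite{EFFS}: since $\rho=\varpi_1+\cdots+\varpi_n$ is regular dominant, $\Psi(\rho)$ is regular dominant for $\wt{\g}$, and the PBW-type construction recalled in Proposition~\ref{Prop:Prelim} yields a canonical isomorphism $\mathcal{F}^\bd(\rho)\simeq X_{w_\bi}$ for every $\bd\in\mathrm{relint}(F^\bi)$, via identification of the cyclic modules $V^\bd(\rho)\simeq \wt V_{w_\bi}(\Psi(\rho))$ and matching of the cyclic vectors $v_\rho^\bd$ and $v_{w_\bi(\Psi(\rho))}$.

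The bridge $(2)\Leftrightarrow(3)$ is the key technical step and I expect it to be the main obstacle. My strategy is to construct a morphism $X_{w_\bi}\to\mathrm{Lag}_\be(M^\bi\oplus\nabla(M^\bi))$ that extracts, from a point of the Schubert variety, the graded pieces of a Lagrangian subrepresentation: concretely, the line in $\wt V(\wt{\varpi}_{\sigma_\bi(j)})$ cut out by a point of $X_{w_\bi}$ gives, through the graded structure provided by \cite{EFFS}, a subspace at vertex $j$ of the quiver representation. Verifying that this produces an isotropic subrepresentation of the correct dimension vector and that the morphism is an isomorphism can be approached either by comparing Schubert-type cell decompositions on both sides, or more efficiently by combining $(1)\Leftrightarrow(2)$ and $(3)\Leftrightarrow(4)$ to reduce the task to identifying $\mathrm{Sp}\mathcal{F}_{2n}^\bi\simeq\mathcal{F}^\bd(\rho)$ directly via PBW-type Pl\"ucker coordinates on the linear degenerate flag variety. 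The hardest sub-step will be developing enough of the general theory of Lagrangian quiver Grassmannians for projective symplectic representations to make this identification rigorous; the introduction flags precisely this development as work-in-progress, which suggests the authors treat $(2)$ as a convenient ``geometric'' reformulation and carry the weight of the proof on the chain $(1)\to(4)\to(3)$.
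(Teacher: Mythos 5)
Your treatment of $(1)\Leftrightarrow(2)$ and of $(3)\Leftrightarrow(4)$ matches the paper: the first is exactly the explicit projection/reconstruction argument the paper gives (the Lagrangian condition $M/N\cong\nabla N$ forces $V_{n+k}=V_{n-k}^{\perp}$, so a Lagrangian subrepresentation of dimension vector $\be$ is determined by its first half), and the second is indeed delegated to \cite{EFFS}. The problem is the remaining link. You correctly identify it as the main obstacle, but what you offer for it is a menu of possible strategies rather than an argument: a hypothetical morphism $X_{w_\bi}\to\mathrm{Lag}_\be(M^\bi\oplus\nabla(M^\bi))$ ``extracting graded pieces'', to be verified ``by comparing cell decompositions'', or alternatively an unspecified identification of $\mathrm{Sp}\mathcal{F}_{2n}^\bi$ with $\mathcal{F}^\bd(\rho)$ via ``PBW-type Pl\"ucker coordinates''. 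Neither is developed to the point where it could be checked, and the first in particular would require exactly the general theory of Lagrangian quiver Grassmannians that you yourself note is not yet available. As it stands, the cycle of isomorphisms is not closed.

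The mechanism the paper actually uses for the bridge is a folding argument proving $(1)\Leftrightarrow(3)$ directly, and it sidesteps both of your proposed routes. One doubles the index set to $\bi'=\{i_1,\ldots,i_t,2n-1-i_t,\ldots,2n-1-i_1\}\subseteq[2n-1]$ and realizes $\mathrm{Sp}\mathcal{F}_{2n}^\bi$ as the fixed locus $(\mathcal{F}_{2n}^{\bi'})^{\tau}$ of the involution $\tau(V_1,\ldots,V_{2n-1})=(V_{2n-1}^{\perp},\ldots,V_1^{\perp})$ acting on the type $A$ linear degenerate flag variety. Following \cite{CFFFR}, $\mathcal{F}_{2n}^{\bi'}$ is identified with a Schubert variety $X_{u_{\bi'}}$ in a partial flag variety for $\mathrm{SL}_{2(n+t)}$; the real work is then to choose a symplectic form on $W=\CC^{2(n+t)}$ compatible with that identification and to check that the map $\mu:\mathcal{F}_{2n}^{\bi'}\to X_{u_{\bi'}}$ intertwines $\tau$ with the symplectic involution $\tau'$ on the ambient flag variety --- an explicit computation with the lifting maps $\pi_s$ and $p_j$. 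One concludes with the classical fact that the $\tau'$-fixed locus of a type $A$ Schubert variety is a type $C$ Schubert variety, giving $(X_{u_{\bi'}})^{\tau'}\cong X_{w_\bi}$. If you want to complete your write-up along the lines you sketched, this intertwining computation (or an equivalent substitute) is the concrete step you still need to supply.
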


\begin{cor}\label{Cor:Iso}
\begin{enumerate}
\item[(1)] The linear degenerate flag variety $\mathrm{Sp}\mathcal{F}_{2n}^\bi$ and the Lagrangian quiver Grassmannian $\mathrm{Lag}_{\be}(M^\bi\oplus\nabla (M^\bi))$ are irreducible and reduced.
\item[(2)] The linear degenerate flag variety $\mathrm{Sp}\mathcal{F}_{2n}^\bi$ and the Lagrangian quiver Grassmannian $\mathrm{Lag}_{\be}(M^\bi\oplus\nabla (M^\bi))$ are normal, Cohen-Macaulay, Frobenius splitting and have rational singularities.
\end{enumerate}
\end{cor}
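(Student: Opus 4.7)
The plan is to deduce both parts of the corollary directly from Theorem \ref{Thm:Iso}. The theorem furnishes isomorphisms of varieties between $\mathrm{Sp}\mathcal{F}_{2n}^\bi$, $\mathrm{Lag}_{\be}(M^\bi\oplus\nabla(M^\bi))$, and the Schubert variety $X_{w_\bi}\subseteq \wt{G}/\wt{P}_{\wt{\lambda}}$. Since every property listed in (1) and (2) is invariant under isomorphism, the strategy is simply to verify each property on the Schubert variety side and transport it back along the isomorphism. Nothing genuinely new has to be proved here; the corollary is an application of well-established results on Schubert varieties.

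For part (1), the Schubert variety $X_{w_\bi}$ is by construction the Zariski closure of the $\wt{B}$-orbit $\wt{B}\cdot v_{w_\bi(\wt{\lambda})}$ in $\wt{G}/\wt{P}_{\wt{\lambda}}$. That orbit is the image of a Bruhat cell, which is an affine space of dimension $\ell(w_\bi)$; its closure is therefore irreducible. Reducedness is built into the definition of a Schubert variety, and has been imposed on $\mathrm{Sp}\mathcal{F}_{2n}^\bi$ after Definition~\ref{Def:LDSFV}. Hence part (1) follows immediately from Theorem~\ref{Thm:Iso}.

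For part (2), the four properties — normality, Cohen–Macaulayness, Frobenius splitting, and rationality of singularities — are classical theorems for Schubert varieties in partial flag varieties of semisimple algebraic groups. Normality and rational singularities in characteristic zero are due to Ramanathan, and the Frobenius splitting (together with its standard consequences, notably Cohen–Macaulayness) is due to Mehta–Ramanathan; a unified treatment is given in Brion–Kumar, \emph{Frobenius Splitting Methods in Geometry and Representation Theory}. Applying these results to $X_{w_\bi}$ and pulling back along the isomorphism of Theorem~\ref{Thm:Iso} yields part (2) for both $\mathrm{Sp}\mathcal{F}_{2n}^\bi$ and $\mathrm{Lag}_{\be}(M^\bi\oplus\nabla(M^\bi))$.

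The only delicate point one should flag is that the maps constructed in Theorem~\ref{Thm:Iso} must be isomorphisms of \emph{schemes}, not just bijections of underlying sets, so that normality and Cohen–Macaulayness transfer; verifying this is the real content of Theorem~\ref{Thm:Iso} and therefore the only serious obstacle behind the present corollary. Once that scheme-theoretic statement is secured in Section~\ref{Sec:Proof}, the corollary reduces to a one-line citation.
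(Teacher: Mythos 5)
Your proposal matches the paper's argument: the authors likewise deduce the corollary immediately from the isomorphism with the Schubert variety $X_{w_\bi}$ in Theorem~\ref{Thm:Iso}, invoking the classical properties of Schubert varieties. Your added caveat about the isomorphisms being scheme-theoretic is a fair observation but does not change the route.
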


These results are proved for $\mathrm{Sp}\mathcal{F}_{2n}^\bi$ where $\bi=[n-1]$ in \cite{FeFiL,CIL}.

\section{Proof of Theorem \ref{Thm:Iso}}\label{Sec:Proof}

We prove Theorem \ref{Thm:Iso} in this section. Corollary \ref{Cor:Iso} holds by the equivalence of the first three points in the theorem.

\subsection{\texorpdfstring{$(3)\Leftrightarrow (4)$}{ 3 equivalent 4 }}
It has been proved in \cite[Corollary 3.3]{EFFS} that $X_{w_\bi}$ is isomorphic to $\mathcal{F}^\bd(\rho)$. The proof of \cite[Corollary 3.3]{EFFS} uses a careful study of the defining ideal of the Demazure module to obtain a surjective map $\wt{V}_{w_\bi}(\Psi(\rho))\to V^\bd(\rho)$ which is in fact not necessary in the symplectic case. 
Indeed, this statement can also be proved directly by adapting the proof in \cite[Section 5]{CFFFR}: the word $w_\bi$ can be obtained from folding a triangular element (see \cite{BFK} for the definition of such elements in type $A$ and $C$), hence it is again a triangular element. 
By the results in \cite{BFK}, \cite[Proposition 10]{CFFFR} holds in the symplectic case, hence the surjective map $V^\bd(\rho)\to\wt{V}_{w_\bi}(\Psi(\rho))$, obtained from studying its image in the Cartan component of the tensor product of fundamental modules, is also injective.

\subsection{\texorpdfstring{$(1)\Leftrightarrow (3)$}{1 equivalent 3 }}
The proof is executed in several steps. The strategy of the proof is the same as \cite{CIL}. However, the combinatorics is much more involved so we decide to provide necessary details of the proof.

Recall that $V=\mathbb{C}^{2n}$ is endowed with a symplectic form $\omega_V$. Recall that we have fixed a symplectic basis $v_1,\ldots,v_{2n}$ of $V$ with the following property: for any $1\leq i,j\leq n$,
$$\omega_V(v_i,v_{2n+1-j})=\delta_{i,j}.$$

\subsubsection{Linear degenerate symplectic flag variety as invariant variety}

In this part we realize the linear degenerate symplectic flag variety as an invariant variety of a linear degenerate flag variety \cite{CFFFR,CFFFR20} under an involution.

For this, define 
$$\bi':=\{i_1,\ldots,i_t,2n-1-i_t,\ldots,2n-1-i_1\}\subseteq [2n-1]$$ 
where we write $i_{t+\ell}:=2n-1-i_\ell$ for $1\leq \ell\leq t$, and the associated linear degenerate flag variety \cite{CFFFR,CFFFR20} is defined by:
$$\mathcal{F}_{2n}^{\bi'}:=\left\{(V_1,\ldots,V_{2n-1})\in\prod_{k=1}^{2n-1}\mathrm{Gr}_k(V)\mid f_k'(V_k)\subseteq V_{k+1}\text{ for $1\leq k\leq 2n-2$}\right\},$$
where the linear maps $f_1',\ldots,f_{2n-2}'$ are defined by
$$f_i':=\begin{cases}
\mathrm{pr}_{i_k+1}, & \text{if $i=i_k$ for some $1\leq k\leq 2t$};\\
\mathrm{id}, & \text{otherwise}.
\end{cases}$$

We consider the following well-defined involution 
$$\tau:\prod_{k=1}^{2n-1}\mathrm{Gr}_k(V)\to \prod_{k=1}^{2n-1}\mathrm{Gr}_k(V),\ \ (V_1,\ldots,V_{2n-1})\mapsto (V_{2n-1}^\bot,\ldots,V_1^\bot),$$
where the orthogonal spaces are with respect to the symplectic form $\omega_V$. Since $\mathcal{F}_{2n}^{\bi'}$ is a subset of the product of Grassmann varieties, let $(\mathcal{F}_{2n}^{\bi'})^\tau$ denote the invariant space with respect to this action of $\tau$.

\begin{lem}\label{Lem:SpInv}
There exists an isomorphism $\mathrm{Sp}\mathcal{F}_{2n}^\bi\cong (\mathcal{F}_{2n}^{\bi'})^\tau$.
\end{lem}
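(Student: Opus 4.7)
The plan is to realize the isomorphism via the extension-by-orthogonals map
$$\phi: \mathrm{Sp}\mathcal{F}_{2n}^\bi \to (\mathcal{F}_{2n}^{\bi'})^\tau,\qquad (V_1,\ldots,V_n) \mapsto (V_1,\ldots,V_n,V_{n-1}^\perp,\ldots,V_1^\perp),$$
with inverse obtained by truncating to the first $n$ entries. The work is organized into four steps: (i) $\tau$-invariance of the extended tuple, (ii) verification of the $\bi'$-flag conditions on it, (iii) well-definedness of the truncation (i.e., a $\tau$-invariant element of $\mathcal{F}_{2n}^{\bi'}$ restricts to an element of $\mathrm{Sp}\mathcal{F}_{2n}^\bi$), and (iv) the two maps are mutually inverse.

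Steps (i) and (iv) are essentially formal. Since $V_n$ is Lagrangian one has $V_n=V_n^\perp$, so the middle entry of $\phi(V_1,\ldots,V_n)$ is self-dual under $\tau$ and the remaining entries pair by construction; (iv) is then immediate from the fact that $\tau$-invariance uniquely determines the upper half of the flag from the lower half.

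The substantive content lies in (ii) and (iii), and both rest on a single adjointness identity for the coordinate projections with respect to the symplectic form:
$$\omega_V(\mathrm{pr}_j(v), w) = \omega_V(v, \mathrm{pr}_{2n+1-j}(w))\qquad\text{for all } v,w \in V \text{ and all } j \in [2n].$$
This is a three-line computation from $\omega_V(v_i, v_{2n+1-j}) = \delta_{ij}$: both sides differ from $\omega_V(v,w)$ by the same quantity $\lambda_j\mu_{2n+1-j}\,\omega_V(v_j,v_{2n+1-j})$, where $\lambda_j,\mu_{2n+1-j}$ denote the relevant coordinates of $v,w$. In particular, $\mathrm{pr}_{2n+1-j}$ is the $\omega_V$-adjoint of $\mathrm{pr}_j$.

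Granted this identity, each $\bi'$-flag condition at a position $k\geq n$ in the extended tuple becomes the $\omega_V$-dual of a $\bi$-flag condition at position $2n-1-k\leq n-1$: concretely, for $\ell\in\bi$,
$$\mathrm{pr}_{\ell+1}(V_\ell)\subseteq V_{\ell+1}\iff \mathrm{pr}_{2n-\ell}(V_{\ell+1}^\perp)\subseteq V_\ell^\perp,$$
by a one-line duality argument using the adjoint identity, and analogously the identity-type conditions translate as $V_\ell\subseteq V_{\ell+1}\iff V_{\ell+1}^\perp\subseteq V_\ell^\perp$. The case that needs extra care is the middle position $k=n$, where $W_n = V_n$ and $W_{n+1} = V_{n-1}^\perp$: depending on whether $n-1\in\bi$, one combines the adjoint identity with the Lagrangian property $V_n^\perp = V_n$ to reduce the condition $f_n'(V_n)\subseteq V_{n-1}^\perp$ to the given $\bi$-condition at $n-1$. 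Step (iii) reads the same equivalences in the opposite direction. The main (minor) obstacle I anticipate is the bookkeeping at the middle position together with the case distinction on $n-1\in\bi$; everything else is formal from the adjoint identity.
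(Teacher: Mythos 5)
Your proposal is correct and follows essentially the same route as the paper: the same extension map $(V_1,\ldots,V_n)\mapsto(V_1,\ldots,V_n,V_{n-1}^\perp,\ldots,V_1^\perp)$ with truncation as inverse, and the same key adjointness identity $\mathrm{pr}_{i_\ell+1}^{*}=\mathrm{pr}_{2n-i_\ell}$ (a consequence of $\omega_V(v_i,v_{2n+1-j})=\delta_{i,j}$) to dualize the flag conditions at positions $k\geq n$; the middle-position bookkeeping you anticipate is handled, as in the paper, by the single observation $V_n=V_n^\perp$.
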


\begin{proof}
Define two morphisms
$$\phi:\mathrm{Sp}\mathcal{F}_{2n}^\bi\to \mathcal{F}_{2n}^{\bi'},\ \ (V_1,\ldots,V_n)\mapsto (V_1,\ldots,V_n,V_{n-1}^\bot,\ldots,V_1^\bot);$$
$$\psi:(\mathcal{F}_{2n}^{\bi'})^\tau\to\mathrm{Sp}\mathcal{F}_{2n}^\bi,\ \ (V_1,\ldots,V_{2n-1})\mapsto (V_1,\ldots,V_n).$$
It remains to show that both maps are well-defined, and the image of $\phi$ is $\tau$-invariant.

\begin{enumerate}
\item The image of $\phi$ is in $\mathcal{F}_{2n}^{\bi'}$. Indeed, since $V_n=V_n^\bot$, it suffices to show that $f_k'(V_{2n-k}^\bot)\subseteq V_{2n-k-1}^{\bot}$ for $n\leq k\leq 2n-2$. There is nothing to show when $f_k'=\mathrm{id}$; otherwise since $n\leq k\leq 2n-2$, we can write $k=i_{t+\ell}$ for $1\leq\ell\leq t$, and $f_k'=\mathrm{pr}_{i_{t+\ell}+1}$. To show that $\mathrm{pr}_{i_{t+\ell}+1}(V_{i_\ell+1}^\bot)\subseteq V_{i_\ell}^\bot$, it suffices to show that $\mathrm{pr}_{i_\ell+1}^*=\mathrm{pr}_{2n-i_\ell}$ where $\mathrm{pr}_{i_\ell+1}^*$ is the adjoint map with respect to the symplectic form. This last statement holds according to the choice of the basis $v_1,\ldots,v_{2n}$.
\item The image of $\phi$ is $\tau$-invariant: it follows from the assumption that $V_n$ is Lagrangian and hence $V_n^\bot=V_n$.
\item The map $\psi$ is well-defined. This follows from the definition of $\bi'$, and the fact that if $(V_1,\ldots,V_{2n-1})$ is $\tau$-invariant, then $V_n$ is Lagrangian.
\end{enumerate}
\end{proof}

\subsubsection{$\mathcal{F}_{2n}^{\bi'}$ is a Schubert variety}

Let $W$ be a vector space over $\mathbb{C}$ of dimension $2(n+t)$ with a fixed basis $e_1,\ldots,e_{2(n+t)}$.

Let $\mathrm{SL}_{n}$ be the group of matrices over $\mathbb{C}$ of determinant $1$ of Lie algebra $\mathfrak{sl}_n$ consisting of traceless $n\times n$ matrices. Fix the Borel subgroup consisting of upper-triangular matrices in $\mathrm{SL}_n$ and the maximal torus consisting of diagonal matrices. The monoid of dominant integral weights will be denoted by $\Lambda^+_{\mathfrak{sl}_n}$ with fundamental weights $\varpi_1,\ldots,\varpi_{n-1}$ as generating set.

Let $[2n+2t]\setminus\{i_1+1,\ldots,i_{2t}+1\}=\{\ell_1,\ldots,\ell_{2n}\}$ with $\ell_1<\ldots<\ell_{2n}$. We define a morphism of monoids $\Theta:\Lambda^+_{\mathfrak{sl}_{2n}}\to \Lambda^+_{\mathfrak{sl}_{2(n+t)}}$ by sending $\varpi_k$ to $\varpi_{\ell_k}$. Let $\delta:=\varpi_1+\ldots+\varpi_{2n-1}\in\Lambda^+_{\mathfrak{sl}_{2n}}$ and $\delta':=\Theta(\delta)\in\Lambda^+_{\mathfrak{sl}_{2(n+t)}}$. Set $P^{\bi'}:=P_{\delta'}\subseteq \mathrm{SL}_{2(n+t)}$ be the stabilizer of the weight $\delta'$ in the group $\mathrm{SL}_{2(n+t)}$.

The partial flag variety $\mathrm{SL}_{2(n+t)}/P^{\bi'}$ admits the following Pl\"ucker embedding
$$\mathrm{SL}_{2(n+t)}/P^{\bi'}\hookrightarrow \prod_{k=1}^{2n}\mathrm{Gr}_{\ell_k}(W).$$

Let $t_1,\ldots,t_{2(n+t)-1}$ be the simple reflections in the Weyl group of $\mathrm{SL}_{2(n+t)}$ and let 
$v_k:=(t_k\cdots t_{i_k+k-1})\cdots (t_k\cdots t_{i_{k-1}+k})$ for $k=1,\ldots,2t+1$ where $i_0=0$ and $i_{2t+1}=2n-1$.
We define an element in the Weyl group of $\mathrm{SL}_{2(n+t)}$: $u_{\bi'}:=v_{2t+1}\cdots v_1$. The following result is proved in \cite[Proposition 6]{CFFFR}. Note that in the current setup, the number $n$ in \textit{loc.cit.} is $2n+2t-1$.

For $1\leq k\leq 2n$, we define $h_k:=\ell_k-k$. It follows immediately $0\leq h_k\leq 2t$ with $h_1=0$ and $h_{2n}=2t$. Note that if $h_{k+1}=h_k+1$, then $f_k'=\mathrm{pr}_{i_{h_{k+1}+1}}$.

\begin{lem}\label{Lem:ui}
\begin{enumerate}
\item If $\ell_j=\ell_{j-1}+1$, then $u_{\bi'}(\ell_j)=h_j+(2n+2t+1-j)$.
\item If $\ell_j=\ell_{j-1}+2$, then $u_{\bi'}(\ell_j-1)=h_j$ and $u_{\bi'}(\ell_j)=h_j+2n+2t$.
\end{enumerate}
\end{lem}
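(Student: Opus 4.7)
The proof is a direct combinatorial computation of the action of $u_{\bi'}=v_{2t+1}\cdots v_1$ on $\ell_j$. My plan has three parts.

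First, I would obtain an explicit normal form for each factor $v_k$ as a permutation. Every elementary block $(t_k t_{k+1}\cdots t_p)$ is the cycle $(k,k+1,\ldots,p+1)$, which shifts $[k,p]$ up by one and sends $p+1\mapsto k$. By an inner induction on the number of such blocks making up $v_k$, one checks the following normal form: $v_k$ fixes the complement of $[k,\,k+i_k]$; on the "lower" part $[k,\,k+i_{k-1}]$ it acts as the upward shift $x\mapsto x+(i_k-i_{k-1})$; and on the "upper" part $[k+i_{k-1}+1,\,k+i_k]$ it acts as the reversal $x\mapsto 2k+i_k-x$, with image $[k,\,k+i_k-i_{k-1}-1]$.

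Second, with this description of each $v_k$ in hand, I would track the trajectory of $\ell_j$ through the successive applications $v_1,v_2,\ldots,v_{2t+1}$. The key combinatorial identities are $\ell_j=j+h_j$ and $h_j-h_{j-1}\in\{0,1\}$: the latter records exactly how many of the removed indices $\{i_1+1,\ldots,i_{2t}+1\}$ lie in the open interval $(\ell_{j-1},\ell_j)$. Case~(1) of the lemma is the case $h_j=h_{j-1}$, while case~(2) is the case $h_j=h_{j-1}+1$, for which one has, in addition, $\ell_j-1=i_{h_j}+1$. The proof then proceeds by an induction on $k$: after the partial product $v_k\cdots v_1$ the current image of $\ell_j$ is tracked, the region (fixed, shift, or reversal) of $v_{k+1}$ containing it is identified, and the relevant piece of the normal form is applied. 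Specializing the induction to $k=2t+1$ reads off the asserted formulas for $u_{\bi'}(\ell_j)$; in case~(2) the companion trajectory of $\ell_j-1$ is followed in parallel and is seen to land at $h_j$.

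The main obstacle is the bookkeeping at the transitions where the image of $\ell_j$ (or $\ell_j-1$) crosses between the shift region and the reversal region of the next $v_{k+1}$. Such transitions occur at the indices $k$ for which $i_k$ equals $h_j-1$ or $h_j$, i.e.\ at the moments when the new "active interval" $[k+1,k+1+i_{k+1}]$ first overtakes the relevant removed index. I would organize the argument by splitting $k=1,\ldots,2t+1$ into three regimes according to this comparison, verifying in each regime separately that the image propagates in a predictable way, and then gluing the three regimes to extract the final closed form. Once the trajectory description is established, both cases of the lemma are immediate.
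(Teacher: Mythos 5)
The paper does not actually prove this lemma: it is quoted from \cite[Proposition~6]{CFFFR} (with the dictionary that ``$n$ in \emph{loc.\ cit.}\ is $2n+2t-1$''), so any direct argument you supply is necessarily a different route. Your normal form for $v_k$ is correct --- each block $t_k\cdots t_p$ is the cycle shifting $[k,p]$ up by one and sending $p+1$ to $k$, and composing the blocks of $v_k$ does produce the ``shift by $i_k-i_{k-1}$ on $[k,k+i_{k-1}]$, reversal on $[k+i_{k-1}+1,k+i_k]$'' description --- and tracking the trajectory of $\ell_j$ through $v_1,\ldots,v_{2t+1}$ is the only reasonable direct strategy. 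Two caveats on your set-up: the identity $h_j-h_{j-1}\in\{0,1\}$ is false in general, since consecutive removed indices do occur (e.g.\ $i_t+1=n$ and $i_{t+1}+1=n+1$ are adjacent whenever $n-1\in\bi$); this is harmless only because the lemma makes assertions exclusively in the two cases it names. You must also fix once and for all whether $v_{2t+1}\cdots v_1$ means ``apply $v_1$ first,'' since the trajectory changes under the opposite convention.

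The genuine gap is that the decisive computation is entirely deferred to ``bookkeeping,'' and it cannot come out as you claim. In case (2) one has $h_j=h_{j-1}+1\geq 1$, so the asserted value $u_{\bi'}(\ell_j)=h_j+2n+2t$ strictly exceeds $2n+2t$, while $u_{\bi'}$ is a permutation of $[2(n+t)]$; read literally the formula is impossible, so no execution of your trajectory argument can ``read off the asserted formulas.'' Before the induction on $k$ can be closed you must either identify the intended normalization (most plausibly by matching conventions with \cite[Proposition~6]{CFFFR}, where the statement is actually proved) or correct the constant. A small numerical test exposes this immediately: for $n=3$, $t=1$, $\bi=\{1\}$ one gets $\bi'=\{1,4\}$, removed indices $\{2,5\}$, $\ell=(1,3,4,6,7,8)$, $h=(0,1,1,2,2,2)$, and case (2) at $j=2$ would demand $u_{\bi'}(3)=1+8=9$ inside $S_8$. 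I strongly recommend running exactly such an example through your normal forms for $v_1,v_2,v_3$ before organizing the three regimes; as written, the proposal asserts a conclusion that a faithful execution of its own method would contradict.
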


We first realize $\mathcal{F}_{2n}^{\bi'}$ as a Schubert variety. The proof of the following lemma is similar to that in \cite{CIL} with more involved combinatorics, we will emphasize on the part where necessary modifications are needed to be introduced.

\begin{lem}\label{Lem:TypeA}
There exists an isomorphism $\mathcal{F}_{2n}^{\bi'}\cong X_{u_{\bi'}}$, where $X_{u_{\bi'}}$ is the Schubert variety in $\mathrm{SL}_{2(n+t)}/P^{\bi'}$ associated to the Weyl group element $u_{\bi'}$.
\end{lem}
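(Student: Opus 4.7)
The plan is to adapt the strategy of \cite{CIL} (which handled the case $\bi=[n-1]$) by constructing an explicit isomorphism between $\mathcal{F}_{2n}^{\bi'}$ and $X_{u_{\bi'}}$. Fix the linear embedding $\iota:V\hookrightarrow W$ with $\iota(v_j):=e_{\ell_j}$ and the projection $\pi:W\twoheadrightarrow V$ annihilating the error basis vectors $\{e_{i_k+1}:1\leq k\leq 2t\}$, so that $\pi\circ\iota=\mathrm{id}_V$. Using these, I define
$$\Phi:\mathcal{F}_{2n}^{\bi'}\to\prod_{k=1}^{2n}\mathrm{Gr}_{\ell_k}(W),\qquad \Phi(V_1,\ldots,V_{2n-1}):=(W_{\ell_1},\ldots,W_{\ell_{2n-1}},W),$$
where $W_{\ell_j}:=\iota(V_j)+\mathrm{span}\{e_{i_k+1}:i_k+1\leq\ell_j\}$. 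The dimension count $\dim W_{\ell_j}=j+h_j=\ell_j$ is immediate, as the intersection of $\iota(V_j)$ with the error subspace is trivial.

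Next, I verify that $\Phi$ is a morphism into $X_{u_{\bi'}}$. The chain of inclusions $W_{\ell_j}\subseteq W_{\ell_{j+1}}$ follows because the defining condition $f_k'(V_k)\subseteq V_{k+1}$ for $k=i_\ell\in\bi'$ says precisely that any $v\in V_k$ decomposes as $v=f_k'(v)+\lambda v_{i_\ell+1}$, so $\iota(v)$ lies in $\iota(V_{k+1})+\mathrm{span}(e_{i_\ell+1})\subseteq W_{\ell_{k+1}}$ thanks to the new error vector absorbed at step $k+1$. To show the image lies in $X_{u_{\bi'}}$ rather than merely the ambient partial flag variety, I apply Lemma~\ref{Lem:ui}: at a regular position ($\ell_j=\ell_{j-1}+1$) the value $u_{\bi'}(\ell_j)$ encodes the next $V$-basis vector joining the flag, while at a degenerate position ($\ell_j=\ell_{j-1}+2$) the two values $u_{\bi'}(\ell_j-1),u_{\bi'}(\ell_j)$ encode respectively the absorption of the error vector and the addition of a new $V$-direction. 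The Bruhat closure of the torus-fixed point indexed by $u_{\bi'}$ then matches the closure of the image of $\Phi$.

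Finally, the inverse $\Psi:X_{u_{\bi'}}\to\mathcal{F}_{2n}^{\bi'}$ is given by $\Psi(W_{\ell_1},\ldots,W_{\ell_{2n}}):=(\pi(W_{\ell_1}),\ldots,\pi(W_{\ell_{2n-1}}))$. The Schubert condition forces $\pi|_{W_{\ell_k}}$ to have rank exactly $k$ (its kernel being precisely the error span contained in $W_{\ell_k}$), and the inclusions in $W$ translate back to $f_k'(V_k)\subseteq V_{k+1}$ via the explicit shape of the degeneration maps. Checking that $\Phi$ and $\Psi$ are mutually inverse is then a direct computation.

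The principal obstacle is the combinatorial verification in the second paragraph: matching the reduced decomposition $u_{\bi'}=v_{2t+1}\cdots v_1$ with the accumulation pattern of error vectors along the flag. The factor $v_k$ governs the block of indices sandwiched between the consecutive degeneration positions $i_{k-1}$ and $i_k$, so one needs an induction on $k$, using that each $v_k$ is itself a product of commuting ``column'' transpositions whose combined effect cycles precisely through the right coordinate basis vectors. While conceptually parallel to \cite{CIL}, where a single block suffices, the block structure coming from a general $\bi\subsetneq[n-1]$ makes this bookkeeping the main technical step.
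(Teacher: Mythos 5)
Your overall strategy (a pair of explicit mutually inverse morphisms, with the Schubert variety described via Lemma~\ref{Lem:ui} as the flags sandwiched between coordinate subspaces) is the same as the paper's, but the construction of the forward map has a genuine gap: a single, position-independent linear embedding $\iota\colon V\hookrightarrow W$, $v_j\mapsto e_{\ell_j}$, cannot produce nested subspaces. With your definition, for $v\in V_k$ and $k=i_\ell\in\bi'$ one has $\iota(v)=\iota(f_k'(v))+\lambda\,\iota(v_{i_\ell+1})=\iota(f_k'(v))+\lambda\,e_{\ell_{i_\ell+1}}$, and $e_{\ell_{i_\ell+1}}$ is \emph{not} the error vector $e_{i_\ell+1}$ --- indeed $i_\ell+1$ is a removed index, so it is never of the form $\ell_m$. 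Since $e_{\ell_{i_\ell+1}}$ lies in $\iota(V)$, which meets the error span trivially, it belongs to $W_{\ell_{k+1}}$ only if $v_{i_\ell+1}\in V_{k+1}$, which fails in general. Concretely, for $n=2$, $\bi=\{1\}$, $\bi'=\{1,2\}$ (so $(\ell_1,\ldots,\ell_4)=(1,4,5,6)$), the point $V_1=\langle v_1+v_2\rangle$, $V_2=\langle v_1,v_3\rangle$, $V_3=\langle v_1,v_3,v_4\rangle$ lies in $\mathcal{F}_4^{\bi'}$, but your recipe gives $W_{1}=\langle e_1+e_4\rangle$ and $W_{4}=\langle e_1,e_2,e_3,e_5\rangle$, and $W_1\not\subseteq W_4$.

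What the argument misses is that the identification of $V$ with a complement of the error span must vary with the flag position: the vector $v_{i_p+1}$ has to be represented by $e_{i_p+1}$ \emph{before} the $p$-th degeneration step (so that it can be absorbed into the error span afterwards) and by a fresh basis vector \emph{after} it. The paper implements this with a family of surjections $\pi_s\colon U_{2n+s}=\langle e_1,\dots,e_{2n+s}\rangle\to V$ satisfying $\pi_s(e_{i_p+1})=0$ and $\pi_s(e_{2n+p})=v_{i_p+1}$ for $p\leq s$, and sets $W_j:=\pi_{h_j}^{-1}(V_j)$; the nesting $W_j\subseteq W_{j+1}$ then follows from the intertwining identity $\pi_{h_{k+1}}=f_k'\circ\pi_{h_k}$, which is the actual content replacing your second paragraph. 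Your inverse map has the same defect (a single projection $\pi$ cannot recover $V_j$ from $W_j$ at all positions); it must likewise be $\pi_{h_j}$. Finally, the reduced-word bookkeeping you single out as the main obstacle is, in the paper, already packaged into the elementary sandwich description \eqref{Eq:Schubert} of $X_{u_{\bi'}}$ coming from Lemma~\ref{Lem:ui}, so that is not where the difficulty lies; it lies in choosing the correct position-dependent lift.
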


\begin{proof}
We start from defining a map 
$$\mu:\prod_{k=1}^{2n-1}\mathrm{Gr}_{k}(V)\to \prod_{k=1}^{2n-1}\mathrm{Gr}_{\ell_k}(W)$$
sending $\mathcal{F}_{2n}^{\bi'}$ to $X_{u_{\bi'}}$.

For $1\leq s\leq 2t$ set $U_{2n+s}=\langle e_1,\ldots,e_{2n+s}\rangle_{\mathbb{C}}$ and define a surjective linear map $\pi_s:U_{2n+s}\to V$ by:
\begin{enumerate}
\item[-] if $1\leq k\leq 2n$ with $k\neq i_1+1,\ldots,i_s+1$, then $\pi_s(e_k)=v_k$;
\item[-] $\pi_s(e_{i_1+1})=\ldots=\pi_s(e_{i_s+1})=0$;
\item[-] for $1\leq p\leq s$, $\pi_s(e_{2n+p})=v_{i_p+1}$.
\end{enumerate}
For convenience we set $U_{2n}:=\langle e_1,\ldots,e_{2n}\rangle_{\mathbb{C}}$ and $\pi_0:U_{2n}\to V$ sending $e_i$ to $v_i$ for $1\leq i\leq 2n$.

Define a map 
$$\mu_k:\mathrm{Gr}_k(V)\to\mathrm{Gr}_{\ell_k}(U_{2n+h_k})\to\mathrm{Gr}_{\ell_k}(W),$$
$$U\mapsto \pi_{h_k}^{-1}(U)\mapsto  \pi_{h_k}^{-1}(U).$$
Since $\langle e_{i_1+1},\ldots,e_{i_{h_k}+1}\rangle_{\mathbb{C}}\subseteq \pi_{h_k}^{-1}(U)$, the map $\mu_k$ is well-defined.

Gathering all $\mu_k$ gives the map $\mu:=(\mu_1,\ldots,\mu_{2n-1})$ above:
$$\prod_{k=1}^{2n-1}\mathrm{Gr}_{k}(V)\ni (V_1,\ldots,V_{2n-1})\mapsto (\pi_{h_1}^{-1}(V_1),\ldots,\pi_{h_{2n-1}}^{-1}(V_{2n-1}))\in\prod_{k=1}^{2n-1}\mathrm{Gr}_{\ell_k}(W).$$

Denote $W_k:=\pi_{h_k}^{-1}(V_k)$. To show that the image of $\mathcal{F}_{2n}^{\bi'}$ under $\mu$ is contained in the partial flag variety $\mathrm{SL}_{2(n+t)}/P^{\bi'}$, that is to say, $W_i\subseteq W_{i+1}$ for $1\leq i\leq 2n-2$, we need the following
\vskip 5pt
\noindent\textit{Claim}. For $1\leq k\leq 2n-2$, $\pi_{h_{k+1}}=f_k'\circ\pi_{h_k}$ on $U_{2n+h_k}$.

\begin{proof}
If $f_k'=\mathrm{id}$, then for any $1\leq p\leq 2t$, $k\neq i_p$ implies that $h_k=h_{k+1}$, it follows by definition that $\pi_{h_k}=\pi_{h_{k+1}}$.

If $f_k'=\mathrm{pr}_{i_p+1}$, then $h_{k+1}=h_k+1$. From the paragraph before Lemma \ref{Lem:ui}, $f_k'=\mathrm{pr}_{i_{h_{k+1}+1}}$. It remains to verify that $\pi_{h_{k+1}}=\mathrm{pr}_{i_{h_{k+1}+1}}\circ\pi_{h_k}$. Note that in this case, if we evaluate $\pi_{h_{k+1}}$ and $\pi_{h_k}$ on $e_1,\ldots,e_{2n+h_k}$, the only difference is the image of $e_{i_{h_{k+1}}+1}$, for the former the image is $0$, but for the latter the image is $v_{i_{h_{k+1}}+1}$. 
\end{proof}

Applying the claim, we have
$$W_i\subseteq \pi_{h_{i+1}}^{-1}\pi_{h_{i+1}}(W_i)=\pi_{h_{i+1}}^{-1} f_i'\pi_{h_i}(W_i)=\pi_{h_{i+1}}^{-1} f_i'(V_i)\subseteq \pi_{h_{i+1}}^{-1}(V_{i+1})=W_{i+1}.$$

It remains to show that the image of $\mathcal{F}_{2n}^{\bi'}$ under $\mu$ is exactly the Schubert variety $X_{u_{\bi'}}$.

According to Lemma \ref{Lem:ui}, the Schubert variety $X_{u_{\bi'}}$ is isomorphic to the tuple of subspaces $(W_1,\ldots,W_{2n-1})\in\prod_{j=1}^{2n-1} \mathrm{Gr}_{\ell_j}(W)$ satisfying $W_i\subseteq W_{i+1}$ for $1\leq i\leq 2n-2$ and 
\begin{equation}\label{Eq:Schubert}
\langle e_{i_{h_1}+1},\ldots,e_{i_{h_j}+1}\rangle_{\mathbb{C}}\subseteq W_j\subseteq \langle e_1,\ldots,e_{2n+h_j}\rangle_{\mathbb{C}}\ \ \text{for $1\leq j\leq 2n-1$}.
\end{equation}
This proves $\mu(\mathcal{F}_{2n}^{\bi'})\subseteq X_{u_{\bi'}}$.

For the other inclusion, take $(W_1,\ldots,W_{2n-1})\in X_{u_{\bi'}}$, it follows from \eqref{Eq:Schubert} that $\ker\pi_{h_j}\subseteq W_j\subseteq U_{2n+h_j}$. We define $V_j:=\pi_{h_j}(W_j)$, then from the above claim,
$$f_j'(V_j)=f_j'\pi_{h_j}(W_j)=\pi_{h_{j+1}}(W_j)\subseteq \pi_{h_{j+1}}(W_{j+1})=V_{j+1},$$
and hence $(V_1,\ldots,V_{2n-1})\in\mathcal{F}_{2n}^{\bi'}$.
\end{proof}

\subsubsection{The linear degenerate symplectic flag variety is a Schubert variety}

We first define an involution $\tau':\prod_{k=1}^{2n-1}\mathrm{Gr}_{\ell_k}(W)\to \prod_{k=1}^{2n-1}\mathrm{Gr}_{\ell_k}(W)$.

Note that until now we have not yet used the symplectic form on $W$. Our choice of such a form is uniquely determined by assigning for $1\leq k\leq 2t$,
$$\omega_W(e_{i_k+1},e_{2n+2t+1-k})=-1,$$
and for $1\leq s\leq n$ with $s\in [n]\setminus\{i_1+1,\ldots,i_{2t}+1\}$,
$$\omega_W(e_s,e_{2n+1-s})=1.$$
To remove the ambiguity, we will write $\bot_V$ and $\bot_W$ for the orthogonal complement in $V$ and $W$. 

The map $\tau'$ is defined by: 
$$\tau'(W_1,\ldots,W_{2n-1}):=(W_{2n-1}^{\bot_W},\ldots,W_1^{\bot_W}).$$
It follows from $\ell_k+\ell_{2n-k}=2n+2t$ that the map $\tau'$ is well-defined.

\begin{lem}\label{Lem:Commute}
The following diagram commutes:
\[
\xymatrix{
\prod_{k=1}^{2n-1}\mathrm{Gr}_{k}(V) \ar[d]^{\tau} \ar[r]^{\mu} & \prod_{k=1}^{2n-1}\mathrm{Gr}_{\ell_k}(W)\ar[d]^{\tau'}\\
\prod_{k=1}^{2n-1}\mathrm{Gr}_{k}(V) \ar[r]^{\mu} & \prod_{k=1}^{2n-1}\mathrm{Gr}_{\ell_k}(W).
}
\]
\end{lem}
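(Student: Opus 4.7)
The plan is to establish the commutativity coordinate-by-coordinate on the ambient product of Grassmannians. Fix $(V_1,\ldots,V_{2n-1})\in\prod_k\mathrm{Gr}_k(V)$. Unrolling the definitions, at position $k$ one has
$$
(\mu\circ\tau)(V_\bullet)_k=\pi_{h_k}^{-1}(V_{2n-k}^{\bot_V}),\qquad (\tau'\circ\mu)(V_\bullet)_k=\pi_{h_{2n-k}}^{-1}(V_{2n-k})^{\bot_W},
$$
so commutativity reduces to the set identity
$$
\pi_{h_k}^{-1}(V_{2n-k}^{\bot_V})=\pi_{h_{2n-k}}^{-1}(V_{2n-k})^{\bot_W}
$$
of subspaces of $W$ of common dimension $\ell_k$ (using $\ell_k+\ell_{2n-k}=2n+2t$).

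The central technical input is a compatibility statement between the symplectic forms $\omega_V$ and $\omega_W$. Concretely, the signs in the definition of $\omega_W$ -- namely $+1$ on the ``regular'' pairs $(e_s,e_{2n+1-s})$ for good $s$, and $-1$ on the ``degenerate'' pairs $(e_{i_k+1},e_{2n+2t+1-k})$ -- are chosen precisely so that, for every $s,s'$ arising as $(h_k,h_{2n-k})$ and every $u\in U_{2n+s}$, $u'\in U_{2n+s'}$, one has
$$
\omega_W(u,u')=\omega_V\bigl(\pi_s(u),\pi_{s'}(u')\bigr).
$$
First I would verify this identity by a direct basis-level computation, treating separately the cases where the pair $(e_a,e_b)$ consists of two ``good'' basis vectors (where both sides collapse to the chosen pairing $\omega_V(v_s,v_{2n+1-s})=1$) and the cases involving a ``degenerate'' basis vector $e_{2n+p}$ (which $\pi_s$ transports to $v_{i_p+1}$, and whose counterpart $e_{i_p+1}$ may or may not be annihilated by $\pi_{s'}$); the sign $-1$ assigned to the degenerate pairings in $\omega_W$ is exactly what is needed so that all signs match on both sides.

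Granted this compatibility, the forward inclusion of the desired set identity is immediate: for $u\in\pi_{h_k}^{-1}(V_{2n-k}^{\bot_V})$ and any $u'\in\pi_{h_{2n-k}}^{-1}(V_{2n-k})$ one has $\pi_{h_k}(u)\in V_{2n-k}^{\bot_V}$ and $\pi_{h_{2n-k}}(u')\in V_{2n-k}$, so the compatibility formula forces $\omega_W(u,u')=0$, whence $u\in\pi_{h_{2n-k}}^{-1}(V_{2n-k})^{\bot_W}$. The reverse inclusion is then a dimension count using the equality $\ell_k+\ell_{2n-k}=2n+2t$.

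The main obstacle is the basis-level verification of the symplectic compatibility formula: carefully tracking the sign conventions on both the ``good'' and ``degenerate'' pairings of $\omega_W$, and the fact that the projections $\pi_{h_k}$ annihilate varying subsets of basis vectors as $k$ varies, so that all signs and zero entries align across the two forms. Once that bookkeeping is in place, the remainder of the argument is a transparent double-containment together with a dimension count.
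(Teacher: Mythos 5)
Your proposal follows essentially the same route as the paper: both reduce the lemma to the subspace identity $\pi_{h_k}^{-1}(V_{2n-k}^{\bot_V})=\pi_{h_{2n-k}}^{-1}(V_{2n-k})^{\bot_W}$, settle one inclusion by the dimension count coming from $\ell_k+\ell_{2n-k}=2n+2t$, and obtain the other from the compatibility of $\omega_W$ with $\omega_V$ along the fibers of $\pi_{h_k}$ and $\pi_{h_{2n-k}}$. Your cleanly stated identity $\omega_W(u,u')=\omega_V(\pi_{h_k}(u),\pi_{h_{2n-k}}(u'))$ is exactly what the paper's decomposition into the pieces (I)--(IV) and (A)--(D) verifies term by term, and as you correctly emphasize it genuinely requires the constraint $h_k+h_{2n-k}=2t$ together with a careful check of the signs assigned to the degenerate pairings in $\omega_W$.
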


Before giving the proof of the lemma, we first complete the proof of $(1)\Leftrightarrow (3)$. Lemma \ref{Lem:Commute}, together with Lemma \ref{Lem:TypeA}, implies that the invariants spaces are isomorphic:
$$(\mathcal{F}_{2n}^{\bi'})^\tau\cong (X_{u_{\bi'}})^{\tau'}.$$

It follows from \cite[Section 3]{LS2}, the definition of $u_{\bi'}$ and $w_\bi$ that $(X_{u_{\bi'}})^{\tau'}\cong X_{w_\bi}$. Applying Lemma \ref{Lem:SpInv} gives the isomorphism $\mathrm{Sp}\mathcal{F}_{2n}^\bi\cong X_{w_\bi}$.

\subsubsection{Proof of Lemma \ref{Lem:Commute}}
Fix $(V_1,\ldots,V_{2n-1})\in\prod_{k=1}^{2n-1}\mathrm{Gr}_{k}(V)$, it suffices to show that for any $1\leq k\leq 2n-1$,
$$\pi_{h_k}^{-1}(V_{2n-k}^{\bot_V})=\pi_{h_{2n-k}}^{-1}(V_{2n-k})^{\bot_W}.$$
It is clear that the vector spaces on both sides have the same dimension, it remains to verify the inclusion $\subseteq$.

For $1\leq j\leq 2t$, we define a linear map $p_j:V\to W$ by:
$$p_j(v_{i_\ell+1})=e_{2n+\ell},\ \ p_j(v_s)=e_s$$
for $1\leq \ell\leq j$ and $s\in [2n]\setminus\{i_1+1,\ldots,i_j+1\}$.

Take $u\in V_{2n-k}^{\bot_V}$ and $u\in V_{2n-k}$, then 
$$\pi_{h_k}^{-1}(v)=\underbrace{p_{h_k}(v)}_{\mathrm{(I)}}+\underbrace{\langle e_{i_1+1},\ldots,e_{i_k+1}\rangle_{\mathbb{C}}}_{\mathrm{(II)}},\ \ \pi_{h_{2n-k}}^{-1}(u)=\underbrace{p_{h_{2n-k}}(u)}_{\mathrm{(III)}}+\underbrace{\langle e_{i_1+1},\ldots,e_{i_{2n-k}+1}\rangle_{\mathbb{C}}}_{\mathrm{(IV)}},$$
It remains to show that $\omega_W(\pi_{h_k}^{-1}(v),\pi_{h_{2n-k}}^{-1}(u))=0$.

From the definition of $\omega_W$, the part (II) and (IV) are orthogonal. For the orthogonality of parts (I) and (IV), notice that the dual vector of $e_{i_{h_j}+1}$ for $1\leq j\leq 2n-k$ with respect to $\omega_W$ is $e_{2n+2t+1-h_j}$. It follows from $h_j+h_{2n-j}=2t$ and $1\leq j\leq 2n-k$ that $2t+1-h_j=h_{2n-j}+1$ with $k\leq 2n-j\leq 2n-1$, and hence $e_{2n+2t+1-h_j}$ is not in the image of $p_{h_k}$. Therefore (I) and (IV) are orthogonal. Similar argument can be applied to show that (II) and (III) are orthogonal.

It remains to show that $\omega_W(p_{h_k}(v),p_{h_{2n-k}}(u))=0$. Set $v=\sum_{i=1}^{2n}\lambda_iv_i$ and $u=\sum_{i=1}^{2n} \mu_i v_i$. It follows from $\omega_V(v,u)=0$ that 
\begin{equation}\label{Eq:Sum}
\sum_{i=1}^n\lambda_i\mu_{2n+1-i}-\sum_{i=1}^n \lambda_{2n+1-i}\mu_i=0.
\end{equation}
Set $C_j:=[2n]\setminus \{i_1+1,\ldots,i_j+1\}$ for $1\leq j\leq 2t$. After applying $p_{h_k}$ and $p_{h_{2n-k}}$ we obtain
$$p_{h_k}(v)=\underbrace{\sum_{i\in C_{h_k}}\lambda_i e_i}_{\mathrm{(A)}}+\underbrace{\sum_{j=1}^{h_k}\lambda_{i_j+1}e_{2n+j}}_{\mathrm{(B)}};\ \ p_{h_{2n-k}}(u)=\underbrace{\sum_{i\in C_{h_{2n-k}}}\mu_i e_i}_{\mathrm{(C)}}+\underbrace{\sum_{j=1}^{h_{2n-k}}\mu_{i_j+1}e_{2n+j}}_{\mathrm{(D)}}.$$

With respect to $\omega_W$, (B) and (D) are orthogonal. Taking the symplectic form of (B) with (C) gives $\sum_{j=1}^{h_k}\lambda_{i_j+1}\mu_{2n-i_j}$. Indeed, it suffices to notice that the dual vector of $e_{2n+j}$ is $e_{i_{2t+1-j}+1}$; if $1\leq j\leq h_k$, $e_{i_{2t+1-j}+1}\notin\{i_1+1,\ldots,i_{h_{2n-k}}+1\}$. Simiarly (A) with (D) gives $-\sum_{j=1}^{h_{2n-k}}\lambda_{2n-i_j}\mu_{i_j+1}$. The part (A) and (C) gives the remaining summands in \eqref{Eq:Sum} since for any $1\leq \ell\leq 2t$, $(i_\ell+1)+(i_{2t-\ell}+1)=2n+1$.

The proof of Lemma \ref{Lem:Commute} is then complete.

\subsection{\texorpdfstring{$(1)\Leftrightarrow (2)$}{1 equivalent 2 }}

Recall that $\be=(1,2,\ldots,2n-1)$. We will show that the following map 
$$\pi:\mathrm{Lag}_{\be}(M^\bi\oplus\nabla M^\bi)\to\mathrm{Sp}\mathcal{F}_{2n}^\bi,$$
$$(V_1,\ldots,V_{n-1},V_\omega,V_{(n-1)^*},\ldots,V_{1^*})\mapsto (V_1,\ldots,V_{n-1},V_\omega)$$
is an isomorphism.

Let $(V,\omega_V)$ be a symplectic vector space over $\mathbb{C}$ of dimension $2n$ and fix a basis $\{v_1,\ldots,v_{2n}\}$ of $V$ such that $\omega_V(v_i,v_{2n+1-j})=\delta_{i,j}$ for $1\leq i,j\leq n$. Using the symplectic form we can naturally identify $V$ and $V^*$ where the dual vector of $v_i$ is $v_i^*=v_{2n+1-i}$.

We define a $\stackrel{\longrightarrow}{A_{2n-1}}$-representation $M=(M_i,M_\alpha)_{i\in Q_0,\alpha\in Q_1}$ as follows:
\begin{enumerate}
\item[-] For $1\leq i\leq n-1$, $M_i=V$, $M_{i^*}=V^*$; $M_\omega=V$.
\item[-] For $1\leq i\leq n-1$ or $i=\omega$, fix the symplectic basis of $M_i$ and denote them by $\{v_{1,i},\ldots,v_{2n,i}\}$; fix the dual basis $\{v_{1,i}^*,\ldots,v_{2n,i}^*\}$ of $M_{i^*}$. 
\item[-] For $1\leq k\leq n-1$, the map $M_{\alpha_k}:M_k\to M_{k+1}$ where $M_n:=M_\omega$ and $v_{i,n}:=v_{i,\omega}$ is defined by: 
$$M_{\alpha_k}(v_{i,k})=\begin{cases} 
0, & \text{if $k\in\bi$, $i=k+1$;}\\
v_{i,k+1},& \text{otherwise}.\end{cases}$$
Note that $M_k$ and $M_{k+1}$ are the same space, $v_{i,k}$ and $v_{i,k+1}$ are the same basis element. Then $M_{\alpha_k}$ is the projection $\mathrm{pr}_{i_s+1}$ if $k=i_s$, and it is the identity map otherwise. The map $M_{\alpha_k^*}:M_{(k+1)^*}\to M_{k^*}$ where $M_{n^*}:=M_\omega$ is just the dual map of $M_{\alpha_k}$ with respect to the symplectic form. That is to say, if $k=i_s$ then $M_{\alpha_k^*}$ sends $v_{k+1,k}^*$ to zero and other $v_{\ell,k+1}^*$ for $\ell\neq k+1$ to $v_{\ell,k}^*$. It follows from $v_i^*=v_{2n+1-i}$ that $M_{\alpha_k^*}=\mathrm{pr}_{2n-i_s}$. If $k\notin \bi$, then $M_{\alpha_k^*}$ is the identity map.
\end{enumerate}

From the construction it follows that $M$ is isomorphic to $M^\bi\oplus \nabla M^\bi$ as $\stackrel{\longrightarrow}{A_{2n-1}}$-representations. Moreover, the linear maps $M_{\alpha_1},\ldots,M_{\alpha_{n-1}}$ are exactly $f_1,\ldots,f_{n-1}$. Taking a Lagrangian subrepresentation $(V_1,\ldots,V_{n-1},V_\omega,V_{(n-1)^*},\ldots,V_{1^*})$ of dimension vector $\be$ of $M$, it follows that for $1\leq i\leq n-1$, $\dim V_i=i$ and $f_i(V_i)\subseteq V_{i+1}$. Moreover $\dim V_\omega=n$, and $V_\omega$ is Lagrangian in $V$. This shows that the map $\pi$ is well-defined. Since the Lagrangian subrepresentation is uniquely determined by the part involving $V_1,\ldots,V_{n-1},V_\omega$ (because the remaining part consists of the quotient and the dual map), the map $\pi$ is an isomorphism.

We illustrate the proof in the example $n=3$ and $\bi=\{1\}$. The projective $\stackrel{\longrightarrow}{A_{2n-1}}$-representation is 
$$M^\bi=P_1^{\oplus 2}\oplus P_{1^*}\oplus P_2$$
and
$$M^\bi\oplus \nabla (M^\bi)=P_1^{\oplus 2}\oplus P_{1^*}\oplus P_2\oplus I_{2^*}\oplus I_1\oplus I_{1^*}^{\oplus 2}.$$

The coefficient quiver of $M$ is depicted below, where the bases are chosen as in the proof:
$$v_{6,1}\to v_{6,2} \to v_{6,\omega}\to v_{1,2}^*\to v_{1,1}^*$$
$$v_{5,1}\to v_{5,2} \to v_{5,\omega}\to v_{2,2}^*\hskip 16pt v_{2,1}^*$$
$$v_{4,1}\to v_{4,2} \to v_{4,\omega}\to v_{3,2}^*\to v_{3,1}^*$$
$$v_{3,1}\to v_{3,2} \to v_{3,\omega}\to v_{4,2}^*\to v_{4,1}^*$$
$$v_{2,1}\hskip 16pt v_{2,2} \to v_{2,\omega}\to v_{5,2}^*\to v_{5,1}^*$$
$$v_{1,1}\to v_{1,2} \to v_{1,\omega}\to v_{6,2}^*\to v_{6,1}^*.$$

\section{An algorithm for \texorpdfstring{$\ee$}{epsilon}-degenerations}\label{Sec_Algorithm}
We conclude the section by implementing the algorithm given in the proof of Lemma~\ref{Lem:CEquivalentR} in the case when $(Q,\sigma)$ is equioriented of type either $(\stackrel{\rightarrow}{A}_{\textrm{odd}},-1)$ or $(\stackrel{\rightarrow}{A}_{\textrm{even}},+1)$.

Let $M$ and $N$ be two $\ee$-representations of the same dimension vector. Let $(M^0,\langle-,-\rangle)$ be the underlying quadratic space of both $M$ and $N$. Assume that $r^M\geq r^N$.  We want to give a sequence $M=Z(0), Z(1),\cdots, Z(k+1)=N$ of $\ee$-representations and a sequence $(\lambda_i(t))_{i=0,\cdots, k}$ of one-parameter subgroups of the group of isometries of $(M^0,\langle-,-\rangle)$ such that $\lim_{t\rightarrow 0}\lambda_i(t)\cdot Z(i)=Z(i+1)$ (this sequence exists by Theorem~\ref{Thm:MainEquioriented}). 
We follow the algorithm of the proof of Lemma~\ref{Lem:CEquivalentR}:  At the $i$-th step, the algorithm has as input two $\ee$-representations $M(i)$ and $N(i)$  of the same dimension vector such that $r^{M(i)}\geq r^{N(i)}$ together with an indecomposable representation $L(i)$. They are constructed so that  there exists a one-parameter subgroup $\mu_i(t)$ which preserves the $\ee$-form such that $\lim_{t\rightarrow0}\mu_i(t)\cdot M(i)= M(i+1)\oplus L(i)\oplus \nabla L(i)$ and $N(i)=N(i+1)\oplus L(i)\oplus \nabla L(i)$. 
We then obtain the required sequence $\{Z_i\}$ by putting $Z(0)=M$ and $Z(i):=M(i)\oplus\bigoplus_{j<i} L(j)\oplus\nabla L(j)$ for $i>1$. 

The algorithm works as follows: if $r^M=r^N$ then $M\simeq N$; otherwise if $M_n$ is zero then both $M$ and $N$ are supported on a smaller quiver. We can hence assume that $M_n$ is not zero. We choose  the maximal index $i$ such that $r_{i,n}^N>r_{i-1,n}^N$; then $N=L(1)\oplus\nabla L(1)\oplus N(1)$ where $L(1)=U_{i,n}=P_i$. Let $\iota:L\rightarrow M$ be a generic isotropic embedding of $L$ into $M$ and we define $M(1)=\iota(L)^\perp/\iota(L)$. We then repeat this construction to the pair $M(1)$ and $N(1)$ and continue this way.
The algorithm stops when $r^{M(i)}=r^{N(i)}$. 

Tables \ref{Table:Ex1} and \ref{Table:Ex2}  provide two examples.

\begin{table}
\begin{tabular}{|c|c|c|c||c|c|}
\hline
\text{Step}&M(i)&N(i)&L(i+1)&Z(i)&\text{Coeff. quiver}\\&&&&&\text{of }Z(i)\\\hline&&&&&\\
(0)&
\xymatrix@R=0pt@C=0pt{
6&6&6&6&6\\
  &6&6&6&6\\
  &  &6&6&6\\
  &  &  &6&6\\
  &  &  &  &6}
&
\xymatrix@R=0pt@C=0pt{
6&5&4&3&2\\
  &6&5&4&3\\
  &  &6&5&4\\
  &  &  &6&5\\
  &  &  &  &*+[F]{6}}
&
$P_5=U_{5,5}$
&
\xymatrix@R=0pt@C=0pt{
6&6&6&6&6\\
  &6&6&6&6\\
  &  &6&6&6\\
  &  &  &6&6\\
  &  &  &  &6}
&
\xymatrix@R=3pt@C=10pt{
*{\cdot}\ar@{-}[r]&*{\cdot}\ar@{-}[r]&*{\cdot}\ar@{-}[r]&*{\cdot}\ar@{-}[r]&*{\cdot}\\
*{\cdot}\ar@{-}[r]&*{\cdot}\ar@{-}[r]&*{\cdot}\ar@{-}[r]&*{\cdot}\ar@{-}[r]&*{\cdot}\\
*{\cdot}\ar@{-}[r]&*{\cdot}\ar@{-}[r]&*{\cdot}\ar@{-}[r]&*{\cdot}\ar@{-}[r]&*{\cdot}\\
*{\cdot}\ar@{-}[r]&*{\cdot}\ar@{-}[r]&*{\cdot}\ar@{-}[r]&*{\cdot}\ar@{-}[r]&*{\cdot}\\
*{\cdot}\ar@{-}[r]&*{\cdot}\ar@{-}[r]&*{\cdot}\ar@{-}[r]&*{\cdot}\ar@{-}[r]&*{\cdot}\\
*{\cdot}\ar@{-}[r]&*{\cdot}\ar@{-}[r]&*{\cdot}\ar@{-}[r]&*{\cdot}\ar@{-}[r]&*{\cdot}}
\\\hline&&&&&\\
(1)&
\xymatrix@R=0pt@C=0pt{
5&5&5&5&4\\
  &6&6&6&5\\
  &  &6&6&5\\
  &  &  &6&5\\
  &  &  &  &5}
&
\xymatrix@R=0pt@C=0pt{
5&5&4&3&2\\
  &6&5&4&3\\
  &  &6&5&4\\
  &  &  &6&*+[F]{5}\\
  &  &  &  &5}
&
$P_4=U_{4,5}$
&
\xymatrix@R=0pt@C=0pt{
6&5&5&5&4\\
  &6&6&6&5\\
  &  &6&6&5\\
  &  &  &6&5\\
  &  &  &  &6}
  &
\xymatrix@R=3pt@C=10pt{
*{\cdot}\ar@{-}[r]&*{\cdot}\ar@{-}[r]&*{\cdot}\ar@{-}[r]&*{\cdot}\ar@{-}[r]&*{\cdot}\\
*{\cdot}\ar@{-}[r]&*{\cdot}\ar@{-}[r]&*{\cdot}\ar@{-}[r]&*{\cdot}&*{\cdot}\\
*{\cdot}\ar@{-}[r]&*{\cdot}\ar@{-}[r]&*{\cdot}\ar@{-}[r]&*{\cdot}\ar@{-}[r]&*{\cdot}\\
*{\cdot}\ar@{-}[r]&*{\cdot}\ar@{-}[r]&*{\cdot}\ar@{-}[r]&*{\cdot}\ar@{-}[r]&*{\cdot}\\
*{\cdot}&*{\cdot}\ar@{-}[r]&*{\cdot}\ar@{-}[r]&*{\cdot}\ar@{-}[r]&*{\cdot}\\
*{\cdot}\ar@{-}[r]&*{\cdot}\ar@{-}[r]&*{\cdot}\ar@{-}[r]&*{\cdot}\ar@{-}[r]&*{\cdot}}
\\\hline&&&&&\\
(2)&
\xymatrix@R=0pt@C=0pt{
4&4&4&4&4\\
  &5&5&4&4\\
  &  &6&5&4\\
  &  &  &5&4\\
  &  &  &  &4}
&
\xymatrix@R=0pt@C=0pt{
4&4&4&3&2\\
  &5&5&4&3\\
  &  &6&5&*+[F]{4}\\
  &  &  &5&4\\
  &  &  &  &4}
&
$P_3=U_{3,5}$
&
\xymatrix@R=0pt@C=0pt{
6&5&4&4&4\\
  &6&5&4&4\\
  &  &6&5&4\\
  &  &  &6&5\\
  &  &  &  &6}
  &
\xymatrix@R=3pt@C=10pt{
*{\cdot}\ar@{-}[r]&*{\cdot}\ar@{-}[r]&*{\cdot}\ar@{-}[r]&*{\cdot}\ar@{-}[r]&*{\cdot}\\
*{\cdot}\ar@{-}[r]&*{\cdot}&*{\cdot}\ar@{-}[r]&*{\cdot}&*{\cdot}\\
*{\cdot}\ar@{-}[r]&*{\cdot}\ar@{-}[r]&*{\cdot}\ar@{-}[r]&*{\cdot}\ar@{-}[r]&*{\cdot}\\
*{\cdot}\ar@{-}[r]&*{\cdot}\ar@{-}[r]&*{\cdot}\ar@{-}[r]&*{\cdot}\ar@{-}[r]&*{\cdot}\\
*{\cdot}&*{\cdot}\ar@{-}[r]&*{\cdot}&*{\cdot}\ar@{-}[r]&*{\cdot}\\
*{\cdot}\ar@{-}[r]&*{\cdot}\ar@{-}[r]&*{\cdot}\ar@{-}[r]&*{\cdot}\ar@{-}[r]&*{\cdot}}
\\\hline&&&&&\\
(3)&
\xymatrix@R=0pt@C=0pt{
3&3&3&3&2\\
  &4&4&4&3\\
  &  &4&4&3\\
  &  &  &4&3\\
  &  &  &  &3}
&
\xymatrix@R=0pt@C=0pt{
3&3&3&3&2\\
  &4&4&4&3\\
  &  &4&4&3\\
  &  &  &4&3\\
  &  &  &  &3}
&
&
\xymatrix@R=0pt@C=0pt{
6&5&4&3&2\\
  &6&5&4&3\\
  &  &6&5&4\\
  &  &  &6&5\\
  &  &  &  &6}
  &
\xymatrix@R=3pt@C=10pt{
*{\cdot}\ar@{-}[r]&*{\cdot}\ar@{-}[r]&*{\cdot}\ar@{-}[r]&*{\cdot}\ar@{-}[r]&*{\cdot}\\
*{\cdot}\ar@{-}[r]&*{\cdot}\ar@{-}[r]&*{\cdot}\ar@{-}[r]&*{\cdot}&*{\cdot}\\
*{\cdot}\ar@{-}[r]&*{\cdot}\ar@{-}[r]&*{\cdot}&*{\cdot}\ar@{-}[r]&*{\cdot}\\
*{\cdot}\ar@{-}[r]&*{\cdot}&*{\cdot}\ar@{-}[r]&*{\cdot}\ar@{-}[r]&*{\cdot}\\
*{\cdot}&*{\cdot}\ar@{-}[r]&*{\cdot}\ar@{-}[r]&*{\cdot}\ar@{-}[r]&*{\cdot}\\
*{\cdot}\ar@{-}[r]&*{\cdot}\ar@{-}[r]&*{\cdot}\ar@{-}[r]&*{\cdot}\ar@{-}[r]&*{\cdot}}
\\\hline
\end{tabular}
\caption{An example of the algorithm in type ($\stackrel{\rightarrow}{A_5}, -1$)}
\label{Table:Ex1}
\end{table}

\begin{table}
\begin{tabular}{|c|c|c|c|}
\hline
\textrm{Step}&M(i)&N(i)&L\\\hline&&&\\
(0)&
\xymatrix@R=0pt@C=0pt{
3&3&2&2&2\\
  &3&2&2&2\\
  &  &4&2&2\\
  &  &  &3&3\\
  &  &  &  &3}
&
\xymatrix@R=0pt@C=0pt{
3&1&1&0&0\\
  &3&1&0&0\\
  &  &4&1&*+[F]{1}\\
  &  &  &3&1\\
  &  &  &  &3}
&
$P_3=U_{3,5}$
\\\hline&&&\\
(1)&\xymatrix@R=0pt@C=0pt{
2&2&1&0&0\\
  &2&1&0&0\\
  &  &2&1&1\\
  &  &  &2&2\\
  &  &  &  &2}
&
\xymatrix@R=0pt@C=0pt{
2&0&0&0&0\\
  &2&0&0&0\\
  &  &2&0&0\\
  &  &  &2&0\\
  &  &  &  &*+[F]{2}}
&
$P_5=U_{5,5}$
\\\hline&&&\\
(2)&\xymatrix@R=0pt@C=0pt{
1&1&1&0&0\\
  &2&1&0&0\\
  &  &2&1&1\\
  &  &  &2&1\\
  &  &  &  &1}
&
\xymatrix@R=0pt@C=0pt{
1&0&0&0&0\\
  &2&0&0&0\\
  &  &2&0&0\\
  &  &  &2&0\\
  &  &  &  &*+[F]{1}}
&
$P_5=U_{5,5}$
\\\hline&&&\\
(3)&\xymatrix@R=0pt@C=0pt{
0&0&0&0&0\\
  &2&1&0&0\\
  &  &2&1&0\\
  &  &  &2&0\\
  &  &  &  &0}
&
\xymatrix@R=0pt@C=0pt{
0&0&0&0&0\\
  &2&0&0&0\\
  &  &2&0&0\\
  &  &  &*+[F]{2}&0\\
  &  &  &  &0}
&
$S_4=U_{4,4}$
\\\hline&&&\\
(4)&\xymatrix@R=0pt@C=0pt{
0&0&0&0&0\\
  &1&1&0&0\\
  &  &2&1&0\\
  &  &  &1&0\\
  &  &  &  &0}
&
\xymatrix@R=0pt@C=0pt{
0&0&0&0&0\\
  &1&0&0&0\\
  &  &2&0&0\\
  &  &  &*+[F]{1}&0\\
  &  &  &  &0}
&
$S_4=U_{4,4}$
\\\hline&&&\\
(5)&\xymatrix@R=0pt@C=0pt{
0&0&0&0&0\\
  &0&0&0&0\\
  &  &2&0&0\\
  &  &  &0&0\\
  &  &  &  &0}
&
\xymatrix@R=0pt@C=0pt{
0&0&0&0&0\\
  &0&0&0&0\\
  &  &2&0&0\\
  &  &  &0&0\\
  &  &  &  &0}
&
\\\hline
\end{tabular}
\caption{An example of the algorithm in type ($\stackrel{\rightarrow}{A_5}, -1$)}
\label{Table:Ex2}
\end{table}
\printbibliography
\end{document}